\newtheorem{defi}{Definition}[section] 
\newtheorem{lem}[defi]{Lemma}
\newtheorem{theo}[defi]{Theorem}
\newtheorem{cor}[defi]{Corollary}
\newtheorem{pro}[defi]{Proposition}
\newtheorem{rem}[defi]{Remark}
\DeclareMathOperator{\N}{\mathbb{N}}
\DeclareMathOperator{\R}{\mathbb{R}}
\DeclareMathOperator{\C}{\mathbb{C}}
\DeclareMathOperator{\T}{\mathbb{T}}
\DeclareMathOperator{\D}{\mathbb{D}}
\DeclareMathOperator{\Z}{\mathbb{Z}}
\title[K\'arm\'an Vortex Street in incompressible fluid models]{K\'arm\'an Vortex Street in incompressible fluid models}
\author[C. Garc\'ia]{Claudia Garc\'ia}
\address{Departamento de Matem\'atica Aplicada and Research Unit ``Modeling Nature'' (MNat), Facultad de Ciencias. Universidad de Granada\\ 18071-Granada, Spain. \& IRMAR, Universit\'e de Rennes 1\\ Campus de
Beaulieu\\ 35~042 Rennes cedex\\ France}
\email{claudiagarcia@ugr.es}
{\thanks{This work has been partially supported by the MINECO-Feder (Spain) research grant number MTM2017-91054EXP, the Junta de Andaluc\'ia (Spain) Project FQM 954 and the MECD (Spain) research grant FPU15/04094.}}
\subjclass[2010]{35Q31, 35Q35, 35B32, 35B36}
\keywords{K\'arm\'an Vortex Street, point vortex model, vortex shedding, Euler equations, incompressible fluids}
\begin{document}

\date{\today}

\begin{abstract}
This paper aims to provide a robust model for the well--known phenomenon of K\'arm\'an Vortex Street arising in Fluid Mechanics. The first theoretical attempt to model this pattern was given by von K\'arm\'an \cite{Karman1,Karman2}. He considered two parallel staggered rows of point vortices, with opposite strength, that translate at the same speed. Following the ideas of Saffman and Schatzman \cite{Saffman-Schatzman}, we propose to study this phenomenon in the Euler equations by considering two infinite arrows of vortex patches. The key idea is to  desingularize the point vortex model proposed by von K\'arm\'an. Our construction is flexible and can be extended to more general incompressible models.
\end{abstract}

\maketitle

\tableofcontents

\section{Introduction}

A distinctive pattern is observed in the wake of a two--dimensional bluff body placed in a uniform stream at certain velocities: this is called the von K\'arm\'an Vortex Street. It consists of vortices of high vorticity in an irrotational fluid. This complex phenomenon occurs in a large amount of circumstances. For instance, the singing of power transmission lines or kite strings in the wind. It can also be observed in atmospheric flow about islands, in ocean flows about pipes or in aeronautical systems. See \cite{Karman1,Karman2,Rayleigh, Saffman-Schatzman, Saffman-Schatzman2, Schatzman, Strouhal}.

The phenomenon of periodic vortex shedding, which is an oscillating flow that takes place when a fluid past a bluff body,  was first studied experimentally with the works \cite{Rayleigh, Strouhal}. The more classical model for K\'arm\'an Vortex Street, and the first theoretical one, was presented by von K\'arm\'an in \cite{Karman1, Karman2}. He considered a distribution of point vortices located in two parallel staggered rows, where the strength is opposite in each row. Other works concerning this model are \cite{Aref, Lamb, Rosenhead}. Since the exact problem seems to be complex from a theoretical point of view, there has been a large variety of approximations. 

It is observed that viscosity is involved with the generation of the vortex layers by the body, which generates the shedding process. However, it seems that viscosity does not play an essential role in the formation of the vortex street once the vortex layers have been created. Moreover, it would appear also that the body is not important in the formation and the evolution of the vortex street. If we concentrate on the evolution of the street, and not in the vortex layer formation process, an inviscid incompressible fluid model can be proposed to model this situation.

K\'arm\'an Vortex Street structures arise in the context of the Euler equations in the works of Saffman and Schatzman \cite{Saffman-Schatzman, Saffman-Schatzman2, Schatzman}. Following the ideas of von K\'arm\'an \cite{Karman1, Karman2}, they considered two parallel infinite array of vortices with finite area and uniform vorticity. They found numerically the existence of this kind of solutions that translate at a constant speed and studied their linear stability.

In this paper, we focus on the study of these structures in different inviscid incompressible fluid models via a desingularization of the point vortex model proposed by von K\'arm\'an \cite{Karman1, Karman2}. We obtain two infinite arrows of vortex patches, i.e. vortices with finite area and uniform vorticity, that translate. Then, we recover analytically the solutions found numerically by Saffman and Schatzman, not only in the Euler equations framework, but also in other incompressible models, that we recall in the following.

Let $q$ be a scalar magnitude of the fluid satisfying the following transport equation 
\begin{eqnarray}\label{Generalsystem}
       \left\{\begin{array}{ll}
          	\partial_t q+(v\cdot \nabla) q=0, &\text{ in $[0,+\infty)\times\mathbb{R}^2$}, \\
         	 v=\nabla^\perp \psi,&\text{ in $[0,+\infty)\times\mathbb{R}^2$}, \\
         	 \psi=G*q,&\text{ in $[0,+\infty)\times\mathbb{R}^2$}, \\
         	 q(0,x)=q_0(x),& \text{ with $x\in\mathbb{R}^2$}.
       \end{array}\right.
\end{eqnarray}
Note that the velocity field $v$ is given in terms of $q$ via the interaction kernel $G$, which is assumed to be a smooth off zero function. Here, $(x_1,x_2)^\perp=(-x_2,x_1)$. In the case that $G=\frac{1}{2\pi}\ln|\cdot|$, we arrive at the Euler equations. On the other hand, if $G=-K_0(|\lambda|| \cdot|)$, with $\lambda\neq 0$ and $K_0$ the Bessel function, the quasi--geostrophic shallow water (QGSW) equations are found. Finally, the generalized surface quasi--geostrophic (gSQG) equations appears in the case that $G=\frac{C_{\beta}}{2\pi}\frac{1}{|\cdot|^\beta}$, for $\beta\in[0,1]$ and $C_{\beta}=\frac{\Gamma\left(\frac{\beta}{2}\right)}{2^{1-\beta}\Gamma\left(\frac{2-\beta}{2}\right)}$. Note that all the previous models have a common property: the kernel $G$ is radial, which will be crucial in this work.

The Euler equations deal with uniform incompressible ideal fluids and $q$ represents the vorticity of the fluid, usually denoted by $\omega$. Yudovich solutions, that are bounded and integrable solutions, are known to exist globally in time, see \cite{MajdaBertozzi, Yudovich}. When the initial data is the characteristic function of a simply--connected bounded domain $D_0$, the solution continues being a characteristic function over $D_0$, that propagates along the flow. These are known as the  vortex patches solutions. If the initial domain is $C^{1,\alpha}$, with $0<\alpha<1$, the regularity persists for any time, see \cite{B-C,Chemin}. The only explicit simply--connected vortex patches known up to now are the Rankine vortex (the circular patch), which are stationary, and the Kirchhoff ellipses \cite{Kirchhoff}, that rotate. Later, Deem and Zabusky \cite{Deem-Zabusky} gave some numerical observations of the existence of V--states, i.e. rotating vortex patches, with $m-$fold symmetry. Using the bifurcation theory, Burbea \cite{Burbea} proved analytically the existence of these V--states close to the Rankine vortex. There has been several works concerning the V--states following the approach of Burbea: doubly--conected V--states, corotating and counter--rotating vortex pairs, non uniform rotating vortices and global bifurcation, see \cite{CastroCordobaGomezSerrano, DelaHozHmidiMateuVerdera, GHS, HMW, HmidiMateu-pairs, HmidiMateuVerdera}.

The { quasi--geostrophic shallow water equations} are found in the limit of fast rotation and weak variations of the free surface in the rotating shallow water equations, see \cite{Vallis}. In this context, the function $q$ is called the potential vorticity. The parameter $\lambda$ is known as the inverse ``Rossby deformation length'', and when it is small, it corresponds to a free surface that is nearly rigid. In the case $\lambda=0$, we recover the Euler equations. Although vortex patches solutions are better known in the Euler equations, there are also some results in the quasi--geostrophic shallow water equations. For the analogue to the Kirchhoff ellipses in these equations, we refer to the works of Dristchhel, Flierl, Polvani and Zabusky \cite{Plotka-Dristchel, Polvani, Polvani-Zabusky-Flierl}. In \cite{D-H-R}, Dristchel, Hmidi and Renault proved the existence of V--states bifurcating from the circular patch.

In the case of {  the generalized surface quasi--geostrophic,  }$q$ describes the potential temperature. This model has been proposed by C\'ordoba et al. in \cite{Cordoba} as an interpolation between the Euler equations and the surface quasi--geostrophic (SQG) equations, corresponding to $\beta=0$ and $\beta=1$, respectively. The mathematical analogy with the classical three--dimensional incompressible Euler equations can be found in \cite{Constantin-Majda-Tabak}. Some works concerning V--states in the gSQG equations are \cite{Cas0-Cor0-Gom,Cas-Cor-Gom, Hassainia-Hmidi, HmidiMateu-pairs}.

The point model for a K\'arm\'an Vortex Street consists in two infinite arrays of point vortices with opposite strength. More specifically, consider a uniformly distributed arrow of points, with same strength in every point, located in the horizontal axis, i.e., $(kl,0)$, with $l>0$ and $k\in\Z$. The other arrow contains an infinite number of points, with opposite strength, which will be parallel to the other one and with arbitrary stagger: $(a+kl,-h)$ with $a\in\R$ and $h\neq 0$. We refer to Figure 1 for a better understanding of the localization of the points. Hence, we consider the following distribution:
\begin{equation}\label{KVS}
q_0(x)=\sum_{k\in\Z}\delta_{(kl,0)}(x)-\sum_{k\in\Z}\delta_{(a+kl,-h)}(x),
\end{equation}
where $a\in\R$, $l>0$ and $h\neq 0$. If the kernel $G$ is radial, then we will show that every point translates in time, with the same constant speed. Moreover, if $a=0$ or $a=\frac{l}{2}$, the translation is parallel to the real axis. In the typical case of the Newtonian interaction, that is $G=\frac{1}{2\pi}\ln|\cdot|$, the evolution of \eqref{KVS} has been fully studied, see \cite{Aref, Lamb, Karman1, Karman2, Newton, Pierrehumbert, Rosenhead}. The problem consists in a first order Hamiltonian system and the evolution of every point is given by the following system:

\begin{align*}
\frac{d}{dt} z_m(t)=&\sum_{\substack{m\neq k\in\Z}} \frac{(z_m(t)-z_k(t))^\perp}{|z_m(t)-z_k(t)|^2}-\sum_{\substack{k\in\Z}} \frac{(z_m(t)-\tilde{z}_k(t))^\perp}{|z_m(t)-\tilde{z}_k(t)|^2},\\
\frac{d}{dt} \tilde{z}_m(t)=&\sum_{\substack{k\in\Z}}\frac{(\tilde{z}_m(t)-z_k(t))^\perp}{|\tilde{z}_m(t)-z_k(t)|^2}-\sum_{\substack{m\neq k\in\Z}}\frac{(\tilde{z}_m(t)-\tilde{z}_k(t))^\perp}{|\tilde{z}_m(t)-\tilde{z}_k(t)|^2},
\end{align*}
with initial conditions
\begin{align*}
z_m(0)=&ml,\\
\tilde{z}_m(0)=&a+ml-ih,
\end{align*}
for $m\in\Z$. It can be checked that the velocity at every point is the same, providing us with a translating motion. Indeed, if $a=0$ or $a=\frac{l}{2}$, the speed can be expressed by elementary functions, where we observe that the translation is horizontal:
\begin{align*}
V_0=&\frac{1}{2l}\coth\left(\frac{\pi h}{l}\right), \ \textnormal{ for } \  a=0,\\
V_0=&\frac{1}{2l}\tanh\left(\frac{\pi h}{l}\right),  \ \textnormal{ for } \   a=\frac{l}{2}.
\end{align*}

The works of Saffmann and Schatzman \cite{Saffman-Schatzman, Saffman-Schatzman2, Schatzman} refer to the study of these structures in the Euler equations. In fact,  they consider two infinite arrows of vortices with finite area, which have uniform vorticity inside. These are two infinite arrows of vortex patches distributed as in \eqref{KVS}. They found existence of this kind of solutions numerically and they studied their linear stability, see \cite{Saffman-Schatzman, Saffman-Schatzman2, Schatzman}.  The problem can be studied not only in the Euler equations for the full space, but in the Euler equations in the periodic setting. A theory for the Euler equations in an infinite strip can be found in \cite{Beichman-Denisov}. In \cite{VladimirGryanikBorthOlbers}, Gryanik, Borth and Olbers studied the quasi--geostrophic K\'arm\'an Vortex Street in two--layer fluids.

\begin{figure}[h]
\begin{center}
\includegraphics[width=0.6\textwidth]{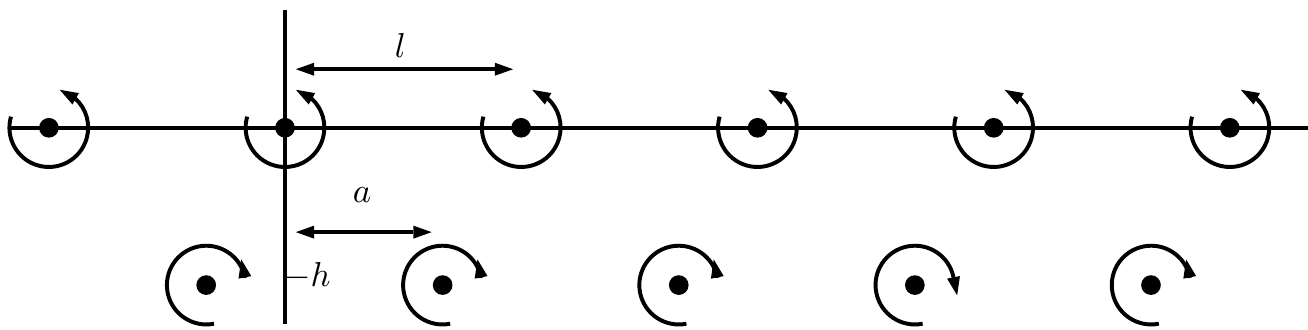}
\caption{K\'arm\'an Vortex Street located at the points $(kl,0)$ and $(a+kl,-h)$,  with  $l>0$, $a\in\R$, $h\neq 0$, and $k\in\Z$.}
\label{fig}
\end{center}
\end{figure}

The aim of this work is to find analitically solutions to the model proposed by Saffman and Schatzman in \cite{Saffman-Schatzman, Saffman-Schatzman2, Schatzman}. We will do it not only for the Euler equations, but also for other inviscid incompressible models.  Here, we follow the idea of Hmidi and Mateu in \cite{HmidiMateu-pairs}, where they desingularize a vortex pairs obtaining a pair of vortex patches that rotate or translate (depending on the strength of the points). The plan is the following. For $\varepsilon>0$ and $l>0$, consider
\begin{equation}\label{KVS2}
q_{0,\varepsilon}(x)=\frac{1}{\pi\varepsilon^2}\sum_{k\in\Z}{\bf{1}}_{\varepsilon D_1+kl}(x)-\frac{1}{\pi\varepsilon^2}\sum_{k\in\Z}{\bf{1}}_{\varepsilon D_2+kl}(x_1,x_2),
\end{equation}
where $D_1$ and $D_2$ are simply--connected bounded domains. In the case $|D_1|=|\D|$ and $D_2=-D_1+a-ih$, with $a=0$ or $a=\frac{l}{2}$, and $h\neq 0$, we find the K\'arm\'an Vortex Street \eqref{KVS} in the limit $\varepsilon\rightarrow 0$. This means
$$
\lim_{\varepsilon\rightarrow 0}\, \left\{\frac{1}{\pi\varepsilon^2}\sum_{k\in\Z}{\bf{1}}_{\varepsilon D_1+kl}(x)-\frac{1}{\pi\varepsilon^2}\sum_{k\in\Z}{\bf{1}}_{-\varepsilon D_1+a+kl-ih}(x_1,x_2)\right\}=\sum_{k\in\Z}\delta_{(kl,0)}(x)-\sum_{k\in\Z}\delta_{(a+kl,-h)}(x),
$$
in the distribution sense. Then, we connect the vortex patch model \eqref{KVS2} with the point vortex model \eqref{KVS}. In the following, we will refer to K\'arm\'an Vortex Street or K\'arm\'an Point Vortex Street when having the point vortex model \eqref{KVS}. Otherwise, we denote by K\'arm\'an Vortex Patch Street in the case of \eqref{KVS2}. Some relation between the two domains is needed, and the suitable one is mentioned before: $D_2=-D_1+a-ih$. Assuming that $q(t,x)=q_0(x-Vt)$, for some $V\in\R$, and using a conformal map $\phi:\T\rightarrow \partial D_1$, we arrive at the following equation

\begin{equation*}
F(\varepsilon,f,V)(w):=\textnormal{Re}\left[\left\{\overline{I(\varepsilon,f)(w)}- \overline{V}\right\}{w}{\phi'(w)}\right]=0, \quad w\in\T,
\end{equation*}
where
\begin{align*}
I(\varepsilon,f)(w):=&-\frac{1}{\pi\varepsilon}\sum_{k\in\Z}\int_{\T} G(|\varepsilon(\phi(w)-\phi(\xi))-kl|)\phi'(\xi)\, d\xi\\
&-\frac{1}{\pi\varepsilon}\sum_{k\in\Z}\int_{\T} G(|\varepsilon(\phi(w)+\phi(\xi))-a-kl+ih|)\phi'(\xi)\, d\xi.
\end{align*}
Let us explain the meaning of $f$. Assume that the conformal map is a perturbation of the identity in the following way 
\begin{equation}\label{confmap1}
\phi(w)=i \left(w+\varepsilon f(w)\right), \quad f(w)=\sum_{n\geq 1}a_nw^{-n}, \quad a_n\in\R, w\in\T.
\end{equation}
for $G=\frac{1}{2\pi}\ln|\cdot|$ and $G=-K_0(|\lambda||\cdot|)$. Whereas, it will be consider as
\begin{equation}\label{confmap2}
\phi(w)=i \left(w+\frac{\varepsilon}{G(\varepsilon)} f(w)\right), \quad f(w)=\sum_{n\geq 1}a_nw^{-n}, \quad a_n\in\R, w\in\T.
\end{equation}
for more singular kernels, such as $G=\frac{C_{\beta}}{2\pi}\frac{1}{|\cdot|^\beta}$, for $\beta\in(0,1)$. Moreover, we have that $F(0,0,V_0)(w)=0$, for any $w\in\T$. Here, $V_0$ is the speed of the point model \eqref{KVS}.  The nonlinear functional $F$ is well-defined from $\R\times X_{\alpha}\times \R$ to $\tilde{Y}_\alpha$, where
\begin{align*}
X_{\alpha}&=\left\{f\in C^{1,\alpha}(\T), \quad f(w)=\sum_{n\geq 1}a_nw^{-n},\,  a_n\in\R\right\},\\
\tilde{Y}_{\alpha}&=\left\{f\in C^{0,\alpha}(\T), \quad f(e^{i\theta})=\sum_{n\geq 1}a_n\sin(n\theta),\,  a_n\in\R\right\},
\end{align*}
for some $\alpha\in(0,1)$. However, $\partial_f {F}(0, 0, V)$ is not an isomorphism in these spaces. Defining $V$ as a function of $\varepsilon$ and $f$ in the following way
\begin{align}\label{V}
V(\varepsilon,f)=&\frac{\int_{\T}\overline{ I(\varepsilon,f)(w)}{w}{\phi'(w)}(1-\overline{w}^2)dw}{\int_{\T} {w}{\phi'(w)}(1-\overline{w}^2)dw},
\end{align}
then, $F$ is also well-defined from $\R\times X_{\alpha}$ to $Y_{\alpha}$, where
$$
Y_{\alpha}=\left\{f\in C^{0,\alpha}(\T), \quad f(e^{i\theta})=\sum_{n\geq 2}a_n\sin(n\theta), \, a_n\in\R\right\}.
$$
Indeed, $\partial_f {F}(0, 0, V)$ is an isomorphism in these spaces. Then, we fix the velocity $V$ depending on $\varepsilon$ and $f$ as in \eqref{V}. In this way, the Implicit Function theorem can be applied in order to desingularize the point model \eqref{KVS} obtaining our main result:
\begin{theo}
Consider $G=\frac{1}{2\pi}\ln|\cdot|$, $G=-K_0(|\lambda||\cdot|)$, or $G=\frac{C_{\beta}}{2\pi}\frac{1}{|\cdot|^\beta}$, for $\lambda\neq 0$ and $\beta\in(0,1)$. Let $h, l\in\R$, with $h\neq 0$ and $l>0$, and $a=0$ or $a=\frac{l}{2}$. Then, there exists a $C^1$ simply--connected bounded domain $D^{\varepsilon}$ such that
\begin{equation*}
q_{0,\varepsilon}(x)=\frac{1}{\pi\varepsilon^2}\sum_{k\in\Z}{\bf 1}_{\varepsilon D^{\varepsilon}+kl}(x)-\frac{1}{\pi\varepsilon^2}\sum_{k\in\Z}{\bf 1}_{-\varepsilon D^{\varepsilon}+a-ih+kl}(x),
\end{equation*}
defines a horizontal translating solution of \eqref{Generalsystem}, with constant  speed, for any $\varepsilon\in(0,\varepsilon_0)$ and small enough $\varepsilon_0>0$.
\end{theo}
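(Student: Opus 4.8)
The plan is to follow the Lyapunov--Schmidt / Crandall--Rabinowitz scheme already outlined and reduce the existence of the translating patch street to a single application of the Implicit Function Theorem for the functional $F$. First I would justify the reduction to the contour equation. A configuration \eqref{KVS2} translates rigidly at speed $V$ precisely when the relative velocity $v-V$ is tangent to $\partial D_1$; by the imposed symmetry $D_2=-D_1+a-ih$ together with the radial character of $G$, tangency on $\partial D_1$ forces the analogous condition on $\partial D_2$, and the two rows move with a common horizontal velocity exactly as in the point model \eqref{KVS}. This is the structural fact that lets us carry a single unknown domain $D_1$. Parametrizing $\partial D_1$ by a conformal map $\phi$ and writing tangency in complex form yields $F(\varepsilon,f,V)(w)=\textnormal{Re}[(\overline{I(\varepsilon,f)(w)}-\overline V)\,w\,\phi'(w)]=0$, where $I$ is the velocity generated by the patches. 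I would take $\phi$ as in \eqref{confmap1} for the logarithmic and Bessel kernels and as in \eqref{confmap2} for the singular gSQG kernels, the rescaling by $G(\varepsilon)$ being chosen exactly so that the linearization below has a finite, nondegenerate limit as $\varepsilon\to 0$.

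Next comes the functional--analytic setup. I would show that $I(\varepsilon,f)$, hence $F$, extends to a $C^1$ (indeed smooth) map from a neighbourhood of the origin in $\R\times X_{\alpha}\times\R$ into $\tilde Y_{\alpha}$. The delicate points are the convergence and regularity of the periodic kernel sums $\sum_{k\in\Z}G(|\,\cdot\,-kl|)$: for the Euler kernel the individual terms diverge and one must work with the regularized closed form (the cotangent/tangent expressions), while for the gSQG kernel one must control the $\varepsilon$--dependent near--diagonal singularity, which is precisely what dictates the rescaling in \eqref{confmap2}. I would then verify the base point $F(0,0,V_0)=0$: as $\varepsilon\to 0$ the patches collapse to the point configuration and $I(0,0)$ reproduces the point--vortex velocity, whose horizontal value is exactly $V_0=\frac{1}{2l}\coth(\pi h/l)$ for $a=0$ (respectively $\frac{1}{2l}\tanh(\pi h/l)$ for $a=\frac{l}{2}$) computed in the introduction.

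The heart of the argument is the linearization $\partial_f F(0,0,V)$. I expect it to be a Fourier multiplier operator, diagonal on the basis $w^{-n}\mapsto\sin(n\theta)$, whose leading contribution comes from the self--interaction of a single near--circular patch (the classical Burbea/Rankine multiplier), the remaining lattice and cross terms contributing only lower order in $\varepsilon$ at the base point. The obstruction is that the $n=1$ multiplier vanishes, reflecting translation invariance and the freedom in $V$, so $\partial_f F(0,0,V)$ is \emph{not} an isomorphism from $X_{\alpha}$ onto $\tilde Y_{\alpha}$. To remove this kernel I would fix $V=V(\varepsilon,f)$ by \eqref{V}, which is exactly the choice forcing the first sine mode of $F$ to vanish identically, so that $F$ maps $\R\times X_{\alpha}$ into $Y_{\alpha}$. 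On this corange the remaining multipliers ($n\geq 2$) are nonzero and bounded away from zero, making $\partial_f F(0,0,V)$ an isomorphism. This multiplier computation --- explicitly evaluating the linearized kernel sums for each of the three families of $G$ and checking nonvanishing of the eigenvalues for all $n\geq 2$ --- is the main technical obstacle.

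With the isomorphism in hand the conclusion is immediate. The Implicit Function Theorem provides $\varepsilon_0>0$ and a $C^1$ branch $\varepsilon\mapsto f_\varepsilon\in X_{\alpha}$ with $f_0=0$ and $F(\varepsilon,f_\varepsilon)=0$ for $\varepsilon\in(0,\varepsilon_0)$. The map $\phi$ built from $f_\varepsilon$ is a conformal parametrization of a boundary $\partial D^{\varepsilon}$ with $f_\varepsilon\in C^{1,\alpha}\subset C^1$, so $D^{\varepsilon}$ is a $C^1$ simply--connected bounded domain, and by construction the configuration \eqref{KVS2} with $D_2=-D^{\varepsilon}+a-ih$ is a rigidly translating solution of \eqref{Generalsystem} with horizontal speed $V(\varepsilon,f_\varepsilon)$, which is the assertion of the theorem.
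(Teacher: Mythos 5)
Your proposal follows essentially the same route as the paper: reduction to a single contour equation through the symmetry $D_2=-D_1+a-ih$, the conformal scalings \eqref{confmap1} and \eqref{confmap2}, fixing $V=V(\varepsilon,f)$ by \eqref{V} so that $F$ maps into $Y_{\alpha}$, verifying the base point $F(0,0,V_0)=0$ and the $C^1$ regularity, computing the linearization (where indeed only the self--interaction term survives, leading in the general case to the paper's nonvanishing condition (H6), checked for gSQG via the vortex--pair computations), and concluding by the Implicit Function Theorem. One small correction: at $(0,0)$ the linearized operator sends the mode $w^{-n}$ to a nonzero multiple of $\sin((n+1)\theta)$ for every $n\geq 1$, so its failure to be an isomorphism onto $\tilde{Y}_{\alpha}$ is not a vanishing $n=1$ multiplier producing a kernel, but a missing $\sin\theta$ direction in the range (a cokernel); the choice of $V(\varepsilon,f)$ removes precisely this cokernel by forcing the first sine mode of $F$ to vanish, so your remedy is the correct one even though the diagnosis is slightly off.
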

The proof of the theorem has to be adapted to each case. For the cases $G=\frac{1}{2\pi}\ln|\cdot|$ and $G=-K_0(|\lambda||\cdot|)$, the kernel has a logarithm singularity at the origin, which will play an important role in the choice of the scaling of the conformal map \eqref{confmap1}. Otherwise, if $G=\frac{C_{\beta}}{2\pi}\frac{1}{|\cdot|^\beta}$, for $\beta\in(0,1)$, or for more general kernels, the scaling of the conformal map will depend on the singularity at the origin as depicted in \eqref{confmap2}.

This work is organized as follows. In Section 2, we introduce some preliminary results about the $N$--vortex problem and, in particular, the point model for the K\'arm\'an Vortex Street \eqref{KVS} with general interactions.  Section \ref{Sec2} refers to the Euler and QGSW equations, where the singularity of the kernel is logarithmic.  We will deal with the general case in Section \ref{Sec3}, following the same ideas as in the Euler equations. The gSQG equations will become a particular case of this study. Finally, Appendix \ref{Ap-specialfunctions} and Appendix \ref{Ap-potentialtheory} will be devoted to provide some definitions and properties concerning special functions and complex integrals.

Let us end this part by summarizing some notation to be used along the paper. We will denote the unit disc by $\D$ and its boundary by $\T$. The integral
$$
\int_{\T} f(\xi)\, \, d\xi,
$$
denotes the usual complex integral, for some complex function $f$. Moreover, we will use the symmetry sums defined by
\begin{equation}\label{sym-sum}
\sum_{k\in\Z}a_k=\lim_{K\rightarrow +\infty}\sum_{|k|\leq K}a_k,
\end{equation}
except being specified.

\section{The N--vortex problem}\label{Sec-NVP}

The $N$--vortex problem consists in the study of the evolution of a set of points that interact according to some laws. Originally, the Newtonian interaction for the $N$--vortex problem is considered. But here, we start by assuming that the interaction between the points is due to a general function $G:\R\rightarrow\R$, which is smooth off zero. The problem is a first order Hamiltonian system of the form
\begin{equation}\label{dyn-syst}
\frac{d}{dt} z_m(t)=\sum_{\substack{k=1 \\  k\neq m}}^{N}\Gamma_k \nabla^\perp_{z_m} G(|z_m(t)-z_k(t)|),
\end{equation}
with some initial conditions at $t=0$ and where $m=1,\dots, N$. Moreover, $z_k\neq z_m$ if $k\neq m$, are $N$--points located in the plane $\R^2$. The constants $\Gamma_k$ mean the strength of each point and the interaction between them is due to $G$. In the case that $G=\frac{1}{2\pi}\ln|\cdot|$, we arrive at the typical $N$--vortex problem:
$$
\frac{d}{dt} z_m(t)=\sum_{\substack{k=1 \\  k\neq m}}^{N}\Gamma_k \frac{(z_m(t)-z_k(t))^\perp}{|z_m(t)-z_k(t)|^2}.
$$
First, we deal with the evolution of two points. We can check that they rotate or translate depending on their strengh: $\Gamma_1$ and $\Gamma_2$. Later, we will move on the configuration that we concern: an infinite number of points with a periodic pattern in space. For more details about the $N$--vortex problem in the case of the Newtonian interaction see \cite{Newton}.

\subsection{Finite configurations}

There are some stable situations yielding to steady configurations. Here, we  illustrate the evolution of $2$--points since the same idea is used later in the case of periodic configurations.

\begin{pro}
Let us consider two initial points $z_1(0)$ and $z_2(0)$, with $z_1(0)\neq z_2(0)$, located in the real axis, with strengths $\Gamma_1$ and $\Gamma_2$ respectively. We have:
\begin{enumerate}
\item If $\Gamma_1+\Gamma_2\neq 0$ and $\Gamma_1z_1(0)+\Gamma_2z_2(0)=0$, then the solution behaves as 
$z_k(t)=e^{i\Omega t}z_k(0),$ for $k=1,2$, with $\Omega=\frac{(\Gamma_1+\Gamma_2)G'(|z_1(0)-z_2(0)|)}{|z_1(0)-z_2(0)|}$.
\item If $\Gamma_1+\Gamma_2=0$, then $z_k(t)=z_k(0)+U t$, for $k=1,2$, with $U=i\Gamma_2 G'(|z_1(0)-z_2(0)|)\textnormal{sign}(z_1(0)-z_2(0))$.
\end{enumerate}
\end{pro}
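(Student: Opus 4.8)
The plan is to work throughout in complex notation, identifying $\R^2$ with $\C$ so that the perpendicular gradient of a radial function becomes a rotation by $i$. Concretely, since $\nabla^\perp_{z_m}G(|z_m-z_k|)$ corresponds to $i\,G'(|z_m-z_k|)(z_m-z_k)/|z_m-z_k|$, the system \eqref{dyn-syst} for two points reads
\begin{align*}
\dot z_1 &= i\Gamma_2\, \frac{G'(|z_1-z_2|)}{|z_1-z_2|}(z_1-z_2),\\
\dot z_2 &= i\Gamma_1\, \frac{G'(|z_1-z_2|)}{|z_1-z_2|}(z_2-z_1).
\end{align*}
The whole argument then rests on two conservation laws, which I would establish first.

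Computing $\frac{d}{dt}|z_1-z_2|^2=2\,\textnormal{Re}\big[(\overline{z_1-z_2})(\dot z_1-\dot z_2)\big]$ and using $\dot z_1-\dot z_2=i(\Gamma_1+\Gamma_2)\frac{G'}{|z_1-z_2|}(z_1-z_2)$, the bracket equals $i(\Gamma_1+\Gamma_2)\frac{G'}{|z_1-z_2|}|z_1-z_2|^2$, which is purely imaginary; hence its real part vanishes and $|z_1-z_2|$ is constant along the flow. In particular $G'(|z_1-z_2|)\equiv G'(|z_1(0)-z_2(0)|)=:G_0'$, so the system becomes linear with constant coefficients. A second direct computation gives $\frac{d}{dt}(\Gamma_1z_1+\Gamma_2z_2)=i\Gamma_1\Gamma_2\frac{G_0'}{d}\big[(z_1-z_2)+(z_2-z_1)\big]=0$, where $d:=|z_1(0)-z_2(0)|$; thus the weighted center $\Gamma_1z_1+\Gamma_2z_2$ is also conserved.

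For part (2), with $\Gamma_1+\Gamma_2=0$ the difference $\dot z_1-\dot z_2$ vanishes identically, so $z_1-z_2$ stays at its initial value and both velocities equal the constant $\dot z_1=i\Gamma_2\frac{G_0'}{d}(z_1(0)-z_2(0))$. Since the points lie on the real axis, $z_1(0)-z_2(0)=d\,\textnormal{sign}(z_1(0)-z_2(0))$, which collapses this to $U=i\Gamma_2 G_0'\,\textnormal{sign}(z_1(0)-z_2(0))$, giving the stated uniform translation $z_k(t)=z_k(0)+Ut$. For part (1), the conserved center equals its initial value $\Gamma_1z_1(0)+\Gamma_2z_2(0)=0$, so $\Gamma_1z_1(t)+\Gamma_2z_2(t)=0$ for all $t$, i.e. $z_2=-\frac{\Gamma_1}{\Gamma_2}z_1$. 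Substituting yields $z_1-z_2=\frac{\Gamma_1+\Gamma_2}{\Gamma_2}z_1$ and hence $\dot z_1=i\Omega z_1$ with $\Omega=\frac{(\Gamma_1+\Gamma_2)G_0'}{d}$, whose solution is $z_1(t)=e^{i\Omega t}z_1(0)$; the linear relation then forces $z_2(t)=e^{i\Omega t}z_2(0)$ as well, the claimed rigid rotation.

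I do not anticipate a genuine obstacle: the entire argument is elementary once one notices the two conserved quantities (the inter-vortex distance and the weighted center of vorticity). The only points demanding care are the correct complex form of $\nabla^\perp$ as multiplication by $i$, and the sign bookkeeping in part (2) coming from the points being real, which is what pins down the direction of the translation vector $U$.
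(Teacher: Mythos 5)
Your proof is correct, and its skeleton matches the paper's: the same complex form of the two--vortex system, conservation of $\Gamma_1z_1+\Gamma_2z_2$ in part (1), and conservation of $z_1-z_2$ in part (2); indeed your part (2) is essentially the paper's argument verbatim, down to the sign bookkeeping via $\textnormal{sign}(z_1(0)-z_2(0))$. The one genuine difference is in part (1). The paper argues by ansatz: it assumes $z_k(t)=e^{i\Omega t}z_k(0)$, substitutes into the system, and solves for $\Omega$ --- so it exhibits a rotating solution and implicitly relies on uniqueness for the ODE to conclude that this is \emph{the} evolution. You instead first prove that $|z_1-z_2|$ is conserved (its time derivative is the real part of a purely imaginary quantity), which freezes the coefficient $G'(|z_1-z_2|)/|z_1-z_2|$ at its initial value, and then use the conserved weighted center to eliminate $z_2$ and obtain the linear equation $\dot z_1=i\Omega z_1$; the rotation is thus \emph{derived} rather than posited, at essentially no extra cost. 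One caveat, which your argument shares with the paper's: eliminating $z_2$ via $z_2=-\tfrac{\Gamma_1}{\Gamma_2}z_1$ (and, in the paper, deducing $z_2(t)=e^{i\Omega t}z_2(0)$ from $\Gamma_1z_1+\Gamma_2z_2=0$) silently assumes $\Gamma_2\neq 0$; the degenerate case $\Gamma_2=0$, $\Gamma_1\neq 0$ forces $z_1(0)=0$ and requires one separate (easy) line, but it is harmless since a zero-strength point is not a vortex.
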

\begin{proof}
According to \eqref{dyn-syst}, the evolution of the two points is given by the following system:
\begin{equation}
\left\{
\begin{array}{l}\label{2-vortex}
\frac{d}{dt}z_1(t)=i\Gamma_2G'(|z_1(t)-z_2(t)|)\frac{z_1(t)-z_2(t)}{|z_1(t)-z_2(t)|},\\
\frac{d}{dt}z_2(t)=-i\Gamma_1G'(|z_1(t)-z_2(t)|)\frac{z_1(t)-z_2(t)}{|z_1(t)-z_2(t)|}.
\end{array}
\right.
\end{equation}

\noindent
\medskip
{\bf (1)} 
By the above system, it is clear that $\frac{d}{dt}\left(\Gamma_1 z_1(t)+\Gamma_2 z_2(t)\right)=0$, which implies that 
$$
\Gamma_1 z_1(t)+\Gamma_2 z_2(t)=\Gamma_1 z_1(0)+\Gamma_2 z_2(0)=0.
$$
Assuming that $z_1(t)=e^{i\Omega t}z_1(0)$, and using the above equation, one arrives at $z_2(t)=e^{i\Omega t}z_2(0)$. Then, the system  \eqref{2-vortex} yields
$$
\left\{
\begin{array}{l}
i\Omega e^{i\Omega t}z_1(0)=ie^{i\Omega t}\Gamma_2G'(|z_1(0)-z_2(0)|)\frac{z_1(0)-z_2(0)}{|z_1(0)-z_2(0)|},\\
i\Omega e^{i\Omega t}z_2(0)=-ie^{i\Omega t}\Gamma_1G'(|z_1(0)-z_2(0)|)\frac{z_1(0)-z_2(0)}{|z_1(0)-z_2(0)|}.
\end{array}
\right.
$$
Since $z_1(0)$ and $z_2(0)$ are located in the real axis, one has that $z_1(0)-z_2(0)\in\R$, and then subtracting the above two equations amounts to
$$
\Omega=\frac{(\Gamma_1+\Gamma_2)G'(|z_1(0)-z_2(0)|)}{|z_1(0)-z_2(0)|}.
$$

\noindent
\medskip
{\bf (2)}
In this case, we have that $\frac{d}{dt}\left(z_1(t)- z_2(t)\right)=0$, and thus
$$
z_1(t)- z_2(t)=z_1(0)- z_2(0).
$$
As a consequence, \eqref{2-vortex} agrees with
$$
\left\{
\begin{array}{l}
\frac{d}{dt}z_1(t)=i\Gamma_2G'(|z_1(0)-z_2(0)|)\textnormal{sign}(z_1(0)-z_2(0)),\\
\frac{d}{dt}z_2(t)=i\Gamma_2G'(|z_1(0)-z_2(0)|)\textnormal{sign}(z_1(0)-z_2(0)),\\
\end{array}
\right.
$$
which can be solved as
$$
\left\{
\begin{array}{l}
z_1(t)=z_1(0)+i\Gamma_2G'(|z_1(0)-z_2(0)|)\textnormal{sign}(z_1(0)-z_2(0))t,\\
z_2(t)=z_2(0)+i\Gamma_2G'(|z_1(0)-z_2(0)|)\textnormal{sign}(z_1(0)-z_2(0))t.
\end{array}
\right.
$$
\end{proof}
The above result gives us that two vortex points with $\Gamma_1+\Gamma_2\neq 0$, have a rotating evolution. Otherwise, they {\it translate}. From now on, we refer that a structure {\it translates} when the evolution of every point (or every patch, in the case of \eqref{Generalsystem}) is a translation, with the same constant speed.

In the usual $N$--vortex problem, meaning $G=\frac{1}{2\pi}\ln|\cdot|$, the above result is well--known. Here, we have seen that if the interaction of the points is due to a kernel that is radial, we get the same evolution. 

\subsection{Periodic setting}

This section deals with the evolution of two infinite arrays of points with opposite strength, which are periodic in space. More specifically, the points of the first arrow, which have the same strength, will be located in the horizontal axis. Let us assume that we have one point at the origin, and the next one differs of it a distance $l$. This means that we have the points $(kl,0)$, for $l>0$ and $k\in\Z$. The second arrow, with opposite strength to the previous arrow, will be parallel to the horizontal axis but with a height $h\neq 0$, having the following distribution of points: $(a+kl, -h)$, for $a\in\R$ and $k\in\Z$. For the moment, let us consider that $a$ is any real number. 

Then, we focus on 
\begin{equation}\label{PointVortex}
q(x)=\sum_{k\in\Z}\delta_{(kl,0)}(x)-\sum_{k\in\Z}\delta_{(a+kl,-h)}(x),
\end{equation}
with $h\neq 0$, $l>0$ and $a\in\R$. In the following results, we check that the above initial configuration translate when $G=\frac{1}{2\pi}\ln|\cdot|$, $G=-K_0(|\lambda||\cdot|)$, $G=\frac{C_{\beta}}{2\pi}\frac{1}{|\cdot|^\beta}$ for $\beta\in(0,1)$, or for $G$ satisfying some general conditions. Moreover, if $a=0$ or $a=\frac{l}{2}$, the translation is horizontal.

We are going to differentiate two cases depending on the behavior of the interaction $G$ at infinity. This is important in order to give a meaning to the infinite sum coming from \eqref{PointVortex}, whose equations are given by

\begin{align*}
\frac{d}{dt} z_m(t)=&\sum_{\substack{m\neq k\in\Z}} G'(|z_m(t)-z_k(t)|)\frac{(z_m(t)-z_k(t))^\perp}{|z_m(t)-z_k(t)|}-\sum_{k\in\Z} G'(|z_m(t)-\tilde{z}_k(t)|)\frac{(z_m(t)-\tilde{z}_k(t))^\perp}{|z_m(t)-\tilde{z}_k(t)|},\\
\frac{d}{dt} \tilde{z}_m(t)=&\sum_{k\in\Z} G'(|\tilde{z}_m(t)-z_k(t)|)\frac{(\tilde{z}_m(t)-z_k(t))^\perp}{|\tilde{z}_m(t)-z_k(t)|}-\sum_{m\neq k\in\Z} G'(|\tilde{z}_m(t)-\tilde{z}_k(t)|)\frac{(\tilde{z}_m(t)-\tilde{z}_k(t))^\perp}{|\tilde{z}_m(t)-\tilde{z}_k(t)|},
\end{align*}
with initial conditions
\begin{align*}
z_m(0)=&ml,\\
\tilde{z}_m(0)=&a+ml-ih,
\end{align*}
for $m\in\Z$. Then, we refer to the critical case in the case of the Newtonian interaction
$$
G=\frac{1}{2\pi}\ln|\cdot|.
$$
Here, we must use the structure of the logarithm in order to have a convergence sum. Note that here we need to use strongly the symmetry sum. Otherwise, the subcritical cases will use the faster decay of $G$ at infinity as it is the case of the QGSW or gSQG interactions.

\noindent
\medskip
$\bullet$ {\it Critical case:} Let us first show the result for the Newtonian interaction, that is, $G=\frac{1}{2\pi}\ln|\cdot|$. Here, we denote $\omega$ to $q$ to emphasize that we are working with the vorticity.

\begin{pro}\label{Prop-PV}
Given the point vortex street \eqref{PointVortex} with $G=\frac{1}{2\pi}\ln|\cdot|$, for $h\neq 0$, $l>0$ and $a\in\R$, then the street is moving with the following constant velocity speed 
\begin{equation}\label{velocity_PV}
V_0=\frac{1}{2l i}\overline{\cot\left(\frac{\pi(ih-a)}{l}\right)}.
\end{equation}
In the case that $a=0$ or $a=\frac{l}{2}$, the translation is parallel to the horizontal axis with velocity
\begin{align*}
V_0=&\frac{1}{2l}\coth\left(\frac{\pi h}{l}\right), \ \textnormal{ for } \  a=0,\\
V_0=&\frac{1}{2l}\tanh\left(\frac{\pi h}{l}\right),  \ \textnormal{ for } \   a=\frac{l}{2}.
\end{align*}
\end{pro}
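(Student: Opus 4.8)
The plan is to compute the instantaneous velocity of an arbitrary vortex in each row, to check that it is independent of the index and the same for both rows, and then to observe that a rigid translation is automatically consistent: since the induced velocity depends only on the relative positions of the points, which are invariant under a global shift, the common velocity computed at $t=0$ stays constant in time. Thus the ansatz $z_m(t)=ml+V_0t$, $\tilde z_m(t)=a+ml-ih+V_0t$ solves the system once $V_0$ is shown to be the velocity of every point at the initial instant.

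First I would recast the periodic Hamiltonian system in complex form. With $G=\tfrac{1}{2\pi}\ln|\cdot|$ one has $G'(r)=\tfrac{1}{2\pi r}$, and using $z^\perp=iz$ together with $\tfrac{z}{|z|^2}=\tfrac{1}{\overline z}$, each summand $G'(|z|)\tfrac{z^\perp}{|z|}$ collapses to $\tfrac{i}{2\pi}\tfrac{1}{\overline z}$. Evaluating at the initial configuration $z_k(0)=kl$, $\tilde z_k(0)=a+kl-ih$ and setting $j=m-k$ yields
$$\frac{d}{dt}z_m=\frac{i}{2\pi}\sum_{j\neq 0}\frac{1}{\overline{jl}}-\frac{i}{2\pi}\sum_{j\in\Z}\frac{1}{\overline{jl-a+ih}},$$
with the analogous expression for $\tilde z_m$. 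The self-interaction sum $\sum_{j\neq 0}\tfrac{1}{jl}$ vanishes because the symmetric convention \eqref{sym-sum} pairs $j$ with $-j$. For the cross term I would invoke the classical cotangent expansion $\sum_{j\in\Z}\tfrac{1}{j+w}=\pi\cot(\pi w)$ (again a principal-value statement); after extracting the conjugate and the factor $1/l$ this gives $\tfrac{i}{2l}\cot\big(\tfrac{\pi(a+ih)}{l}\big)$. The same computation for the lower row returns the identical value, which both confirms that the whole street translates with a single speed and, after a one-line manipulation using that $\cot$ is odd with real Taylor coefficients, matches the stated form $\tfrac{1}{2li}\,\overline{\cot(\pi(ih-a)/l)}$.

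For the two distinguished staggers I would simply specialize the cotangent. When $a=0$ one uses $\cot(ix)=-i\coth x$, and when $a=\tfrac{l}{2}$ one uses $\cot(\tfrac{\pi}{2}+\theta)=-\tan\theta$ together with $\tan(ix)=i\tanh x$; in both cases the factor $i$ cancels, so $V_0$ is real and the motion is horizontal, producing $\tfrac{1}{2l}\coth(\pi h/l)$ and $\tfrac{1}{2l}\tanh(\pi h/l)$ respectively.

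The main obstacle is entirely one of convergence rather than algebra. The series involved are only conditionally convergent, since $G'$ decays like $1/r$ and the summands behave like $1/j$ at infinity; this is precisely the \emph{critical} feature of the logarithmic kernel noted just before the statement. None of the steps above is meaningful outside the symmetric summation \eqref{sym-sum}. The delicate points are therefore (i) justifying that the self-interaction row-sum truly cancels under symmetric pairing, and (ii) applying the cotangent identity in the principal-value sense; once the symmetric sum is imposed, these cancellations are effectively the entire content of the proof.
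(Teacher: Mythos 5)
Your proof is correct, and its skeleton is the paper's: the symmetric convention \eqref{sym-sum} makes the self-interaction row vanish, reindexing shows every point of both rows has the same velocity, the translating ansatz is then self-consistent because the right-hand side depends only on differences of positions (a point the paper leaves implicit), and the staggers $a=0,\frac{l}{2}$ follow from elementary identities. The genuine difference is how the cross-row lattice sum is evaluated. The paper works at the level of the stream function: it truncates, writes $\psi_K$ as a sum of logarithms, uses the sine product formula \eqref{sine} to identify $\psi=\frac{1}{2\pi}\ln\left|\sin\left(\frac{\pi x}{l}\right)\right|-\frac{1}{2\pi}\ln\left|\sin\left(\frac{\pi(x-a+ih)}{l}\right)\right|$, and only at the end recovers $V_0$ through $2\partial_{\overline{x}}\ln|\sin x|=\overline{\cot x}$; note that the truncated log-sums individually diverge, and the divergent additive constants cancel only because the two rows have opposite strength. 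You instead evaluate the velocity sums directly with the principal-value cotangent expansion $\sum_{j\in\Z}\frac{1}{j+w}=\pi\cot(\pi w)$, which converges row by row and needs no such renormalization; since this identity is the logarithmic derivative of the sine product, the analytic content is the same. What the paper's longer route buys is the closed-form periodic stream function itself, which is exactly what gets recycled in Proposition \ref{Prop-velocity} for the patch velocity field and drives the rest of the desingularization; your route is shorter and self-contained but yields only the point-vortex statement. Your reconciliation of $\frac{i}{2l}\cot\left(\frac{\pi(a+ih)}{l}\right)$ with the stated $\frac{1}{2li}\overline{\cot\left(\frac{\pi(ih-a)}{l}\right)}$, via $\overline{\cot z}=\cot\overline{z}$ and oddness of the cotangent, is also correct, as are both specializations.
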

\begin{rem}
If $\omega_{\kappa}(x,y)=\kappa \omega(x,y),$ with $\kappa\in\R$ and $\omega$ given by \eqref{PointVortex}, then the velocity of the street is $V_{0,\kappa}=\kappa V_0$.
\end{rem}
\begin{proof}
Define
$$
\omega_K(x)=\sum_{|k|\leq K}\delta_{(kl,0)}(x)-\sum_{|k|\leq K}\delta_{(a+kl,-h)}(x).
$$
The idea is to consider $K\rightarrow +\infty$, getting $\omega_K\rightarrow \omega$ in the distribution sense. The associated stream function to $\omega_K$ is given by
\begin{equation}\label{streamfunction_PV}
\psi_K(x)=\frac{1}{2\pi}\sum_{|k|\leq K}\ln|x-kl|-\frac{1}{2\pi}\sum_{|k|\leq K}\ln|x-a-kl+ih|,
\end{equation}
where we are using complex notation. In order to pass to the limit, we need to use the structure of the logarithm. Let us work with the sum in the following way
\begin{align*}
\sum_{|k|\leq K}\ln|x-a-kl+ih|=&\ln\left|\prod_{|k|\leq K}(x-a-kl+ih)\right|\\
=&\ln\left|(x-a+ih)\prod_{k=1}^{K}\left((x-a+ih)^2-k^2l^2\right)\right|\\
=&\ln\left|\frac{\pi(x-a+ih)}{l}\prod_{k=1}^{K}\left(1-\frac{(x-a+ih)^2}{k^2l^2}\right)\right|+\ln\left|\frac{l}{\pi}\prod_{k=1}^{K}k^2l^2\right|.
\end{align*}
Then, we have
\begin{align*}
\lim_{K\rightarrow \infty}\psi_K(x)=\lim_{K\rightarrow \infty}&\left\{\frac{1}{2\pi}\ln\left|\frac{\pi x}{l}\prod_{k=1}^{K}\left(1-\frac{x^2}{k^2l^2}\right)\right|\right.\\
&\left.-\frac{1}{2\pi}\ln\left|\frac{\pi(x-a+ih)}{l}\prod_{k=1}^{K}\left(1-\frac{(x-a+ih)^2}{k^2l^2}\right)\right|\right\}.
\end{align*}
Using the product expression for the sine, that is
\begin{align}\label{sine}
\sin (\pi x)=\pi x\prod_{k\geq 1}\left(1-\frac{x^2}{k^2}\right),
\end{align}
we get that
\begin{equation*}
\psi(x):=\lim_{K\rightarrow \infty}\psi_K(x)=\frac{1}{2\pi}\ln\left|\sin\left(\frac{\pi x}{l}\right)\right|-\frac{1}{2\pi}\ln\left|\sin\left(\frac{\pi(x-a+ih)}{l}\right)\right|.
\end{equation*}
In this way, we achieve that the sum in \eqref{streamfunction_PV} converges. In the same way, the corresponding velocity agrees with
$$
v_K(x)=\frac{i}{2\pi}\sum_{|k|\leq K}\frac{x-kl}{|x-kl|^2}-\frac{i}{2\pi}\sum_{|k|\leq K}\frac{x-a-kl+ih}{|x-a-kl+ih|^2},
$$
where $x$ is none of the points vortex. In each of the points, the velocity is given by
\begin{align*}
v_K(ml)=&\frac{i}{2\pi}\sum_{k\neq m,|k|\leq K}\frac{ml-kl}{|ml-kl|^2}-\frac{i}{2\pi}\sum_{|k|\leq K}\frac{ml-a-kl+ih}{ml-a-kl+ih|^2},\\
v_K(a-ih+ml)=&\frac{i}{2\pi}\sum_{|k|\leq K}\frac{a-ih+ml-kl}{|x-kl|^2}-\frac{i}{2\pi}\sum_{k\neq m,|k|\leq K}\frac{ml-kl}{|x-a-kl+ih|^2},
\end{align*}
for any $m\in\Z$. We define $v$ as the limit of the above function.

First, let us show that the velocity at every point is the same. We begin with the first arrow
\begin{align*}
v(ml)=&\frac{i}{2\pi}\sum_{m\neq k\in \Z}\frac{ml-kl}{|ml-kl|^2}-\frac{i}{2\pi}\sum_{k\in\Z}\frac{ml-a-kl+ih}{|ml-a-kl+ih|^2}\\
=&-\frac{i}{2\pi}\sum_{0\neq k\in \Z}\frac{kl}{|kl|^2}+\frac{i}{2\pi}\sum_{k\in\Z}\frac{a+kl-ih}{|a+kl-ih|^2}\\
=&\frac{i}{2\pi}\sum_{k\in\Z}\frac{a+kl-ih}{|a+kl-ih|^2}\\
=&v(0).
\end{align*}
Note that
$$
\sum_{0\neq k\in \Z}\frac{kl}{|kl|^2}=0,
$$
since we are using the symmetry sum \eqref{sym-sum}. For the second arrow, we obtain
\begin{align*}
v(a-ih+ml)=&\frac{i}{2\pi}\sum_{k\in\Z}\frac{a-ih+ml-kl}{|a-ih+ml-kl|^2}-\frac{i}{2\pi}\sum_{m\neq k\in\Z}\frac{ml-kl}{|ml-kl|^2}\\
=&\frac{i}{2\pi}\sum_{k\in\Z}\frac{a+kl-ih}{|a+kl-ih|^2}-\frac{i}{2\pi}\sum_{0\neq k\in \Z}\frac{kl}{|kl|^2}\\
=& v(0).
\end{align*}
Then, the velocity speed of the street is given by
$$
V_0=\frac{i}{2\pi}\sum_{k\in\Z}\frac{a+kl-ih}{|a+kl-ih|^2}-\frac{i}{2\pi}\sum_{0\neq k\in \Z}\frac{kl}{|kl|^2}=\frac{i}{2\pi}\sum_{k\in\Z}\frac{a+kl-ih}{|a+kl-ih|^2}.
$$
In order to find a better expression for $V$, let us come back to the stream function. Using 
\begin{align*}
\nabla \ln |\sin(x)|=\frac{\sin x_1\cos x_1+i\sinh x_2 \cosh x_2}{|\sin x|^2}=\overline{\cot x},
\end{align*}
we achieve
$$
V_0=\frac{1}{2l i}\overline{\cot\left(\frac{\pi(ih-a)}{l}\right)}.
$$
Let us now work with $a=\frac{l}{2}$. Using the definition of the complex cotangent, we obtain
\begin{align*}
\cot\left(\frac{\pi(ih-\frac{l}{2})}{l}\right)=-i\frac{\sinh\left(\frac{\pi h}{l}\right)}{\cosh\left(\frac{\pi h}{l}\right)}=-i\tanh\left(\frac{\pi h}{l}\right),
\end{align*}
which is the announced expression for the velocity. The same idea can be applied to get the expression when $a=0$.
\end{proof}
For the cases $a=0$ and $a=\frac{l}{2},$  we notice that the velocity increases as $h$ goes to 0. Moreover, considering $a=\frac{l}{2}$ and $h\rightarrow 0$ in the above proposition, one obtains the following corollary.
\begin{cor}
The vortex arrow given by
$$
\omega(x)=\sum_{k\in\Z}\delta_{(kl,0)}(x)-\sum_{k\in\Z}\delta_{(\frac{l}{2}+kl,0)}(x),
$$
is stationary, for any $l>0$.
\end{cor}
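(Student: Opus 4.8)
The plan is to read this Corollary off directly from Proposition \ref{Prop-PV}. The configuration in the statement is exactly the point vortex street \eqref{PointVortex} with stagger $a=\frac{l}{2}$ and height $h=0$; the two rows collapse onto the real axis, one row of vortices sitting at the points $(kl,0)$ and the other, of opposite strength, at the interlaced points $(\frac{l}{2}+kl,0)$. Since $\frac{l}{2}+kl\neq 0$ for every $k\in\Z$, no two vortices coincide and the velocity sums remain meaningful. From Proposition \ref{Prop-PV} with $a=\frac{l}{2}$, the translation speed for $h\neq 0$ equals $\frac{1}{2l}\tanh\left(\frac{\pi h}{l}\right)$, and as $h\to 0$ this tends to $0$. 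So the Corollary should follow by a limiting argument, once one checks that the $h=0$ street genuinely inherits the vanishing speed.

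To make this rigorous for the $h=0$ street itself, I would argue that, with $a=\frac{l}{2}$ fixed, the velocity at each vortex depends continuously on $h$ near $h=0$. Recall from the proof of Proposition \ref{Prop-PV} that the common speed is
$$
V_0=\frac{i}{2\pi}\sum_{k\in\Z}\frac{a+kl-ih}{|a+kl-ih|^2},
$$
understood in the symmetric sense \eqref{sym-sum}. Because the denominators $|\tfrac{l}{2}+kl-ih|^{2}$ stay bounded away from zero uniformly in $k$ when $h$ is small, the symmetric sum is continuous in $h$ at $h=0$, so the speed of the $h=0$ configuration equals the limit just computed, namely $0$.

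Alternatively, and more transparently, I would evaluate the sum directly at $h=0$, $a=\frac{l}{2}$, where $\frac{l}{2}+kl$ is real, obtaining
$$
V_0=\frac{i}{2\pi}\sum_{k\in\Z}\frac{1}{\frac{l}{2}+kl}=\frac{i}{\pi l}\sum_{k\in\Z}\frac{1}{2k+1},
$$
and then observe that the symmetric partial sum telescopes, $\sum_{|k|\leq K}\frac{1}{2k+1}=\frac{1}{2K+1}\to 0$, which forces $V_0=0$ and hence a stationary street. The main (and essentially the only) subtlety is the use of the symmetric summation convention \eqref{sym-sum}: the series $\sum_k\frac{1}{2k+1}$ is not absolutely convergent, so its vanishing relies on the cancellation between the terms indexed by $k$ and $-k$, exactly in the spirit of the critical case treated in Proposition \ref{Prop-PV}. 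Everything else is an immediate specialization, so I expect the write-up to be very short.
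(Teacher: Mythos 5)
Your proposal is correct, and its first half is exactly the paper's argument: the paper proves this corollary in a single sentence, by setting $a=\frac{l}{2}$ in Proposition \ref{Prop-PV} and letting $h\rightarrow 0$ in the formula $V_0=\frac{1}{2l}\tanh\left(\frac{\pi h}{l}\right)$. Where you go beyond the paper is in recognizing that this limiting step is not, by itself, a proof that the $h=0$ configuration is stationary: the velocity of the limit configuration need not a priori equal the limit of the velocities, and the paper leaves that continuity point implicit. Your two ways of closing the gap are both sound. The continuity-in-$h$ argument works because for $a=\frac{l}{2}$ the denominators $\left|\frac{l}{2}+kl-ih\right|^2$ are bounded below by $\left(\frac{l}{2}\right)^2$ uniformly in $k$ and in small $h$, and the symmetric sum converges uniformly. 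Your direct evaluation at $h=0$ is the cleanest route: the proof of Proposition \ref{Prop-PV} (equality of the velocity at every point, and the formula $V_0=\frac{i}{2\pi}\sum_{k\in\Z}\frac{a+kl-ih}{|a+kl-ih|^2}$) goes through verbatim when $h=0$ and $a=\frac{l}{2}$, since the two rows remain disjoint, and then the symmetric partial sums telescope, $\sum_{|k|\leq K}\frac{1}{2k+1}=\frac{1}{2K+1}\rightarrow 0$, so $V_0=0$. You are also right to flag that this vanishing is a conditional-convergence phenomenon resting on the convention \eqref{sym-sum}; that is precisely the same mechanism the paper invokes in the critical case. In short: same approach as the paper, but your write-up is more rigorous, and the telescoping computation is a self-contained alternative that avoids the limit-interchange issue entirely.
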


Similar ideas can be applied to find that a horizontal arrow of points with the same strength is stationary.

\begin{pro}\label{Prop-euler_statarrow}
The vortex arrow given by
$$
\omega(x)=\sum_{k\in\Z}\delta_{(a+kl,-h)}(x),
$$
is stationary, for any $a\in\R$ and $h\in\R$.
\end{pro}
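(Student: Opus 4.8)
The plan is to mimic the cancellation already observed in the proof of Proposition~\ref{Prop-PV}, but the situation is in fact simpler because the second (counter-rotating) arrow is absent and only the self-interaction of a single periodic row survives. Since the configuration consists of a single infinite row of equally spaced vortices of the same strength, a rigid translation of the whole picture leaves the relative positions of the points unchanged; hence it suffices to evaluate the velocity induced at one generic vortex by all the others and to verify that it vanishes identically, regardless of $a$ and $h$. I would write the vortices in complex notation as $z_m=a+ml-ih$, $m\in\Z$, truncate the arrow to $|k|\le K$, and pass to the limit in the sense of the symmetry sum \eqref{sym-sum}.

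Using \eqref{dyn-syst} with the radial kernel, the velocity induced at $z_m$ by the remaining vortices of the truncated arrow is
\begin{equation*}
v_K(z_m)=i\sum_{\substack{k\neq m\\ |k|\le K}}G'(|z_m-z_k|)\frac{z_m-z_k}{|z_m-z_k|}.
\end{equation*}
The key observation is that all the points are collinear, so every difference equals $z_m-z_k=(m-k)l\in\R$, whence $\frac{z_m-z_k}{|z_m-z_k|}=\operatorname{sign}(m-k)$ and $G'(|z_m-z_k|)=G'(|m-k|l)$. Performing the change of index $j=m-k$ gives
\begin{equation*}
v_K(z_m)=i\sum_{\substack{j\neq 0\\ |j|\le K}}G'(|j|l)\,\operatorname{sign}(j)=0,
\end{equation*}
since the summand $G'(|j|l)\operatorname{sign}(j)$ is odd in $j$, so that the $j$ and $-j$ contributions cancel in each symmetric partial sum. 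Letting $K\to\infty$, the velocity at every point of the arrow is zero, independently of $m$, $a$ and $h$; consequently no vortex moves and the configuration is stationary. For the Newtonian interaction $G=\frac{1}{2\pi}\ln|\cdot|$ this reduces to the vanishing of $\sum_{0\neq k\in\Z}\frac{kl}{|kl|^2}$ already used in Proposition~\ref{Prop-PV}, but the argument plainly covers any radial $G$.

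I do not expect a genuine obstacle here. The only point deserving emphasis is that, in contrast with the two-row case of Proposition~\ref{Prop-PV}—where one had to exploit the precise structure of the logarithm to make a conditionally convergent series meaningful—for a single collinear row the cancellation is exact at every truncation level $K$, by odd symmetry in the index $j$. Thus no decay of $G$ and no special identity is needed; the statement is a direct consequence of the collinearity of the arrow together with the symmetry-sum convention \eqref{sym-sum}, which is precisely what assigns the value $0$ to the interaction sum.
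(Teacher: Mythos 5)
Your proposal is correct and coincides with the argument the paper intends: the paper omits the proof, remarking only that ``similar ideas'' to Proposition~\ref{Prop-PV} apply, and the idea there is precisely your cancellation, namely that the self-interaction sum $\sum_{0\neq k\in\Z}\frac{kl}{|kl|^2}=0$ vanishes term-by-term under the symmetry-sum convention \eqref{sym-sum}. Your write-up simply makes this explicit for a general radial kernel, observing that collinearity makes every symmetric partial sum vanish exactly, so no decay hypothesis on $G$ is needed.
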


\noindent
\medskip
$\bullet$ {\it Subcritical case:} We finish this section by showing the result for faster decays interactions. This case will cover the QGSW and gSQG interactions: $G=-K_0(|\lambda||\cdot|)$ or $G=\frac{C_{\beta}}{2\pi}\frac{1}{|\cdot|^\beta}$ for $\beta\in(0,1)$. The result reads as follows.
\begin{pro}\label{Gen-point}
Let $G:\R^2\rightarrow \R$ be a smooth off zero function satisfying
\begin{enumerate}
\item[(H1)] $G$ is radial such that $G(x)=\tilde{G}(|x|)$,
\item[(H2)] there exists $R>0$ and $\beta_1\in(0,1]$ such that $|\tilde{G}'(r)|\leq \frac{C}{r^{1+\beta_1}}$, for $r\geq R$.
\end{enumerate}
Then,
\begin{equation}\label{PointVortex-gen}
q(x)=\sum_{k\in\Z}\delta_{(kl,0)}(x)-\sum_{k\in\Z}\delta_{(a+kl,-h)}(x),
\end{equation}
with $h\neq 0$, $l>0$ and $a\in\R$, translates with constant velocity speed
\begin{equation}\label{V_0-gen}
V_0=i\sum_{k\in\Z} G'(|a+kl-ih|)\frac{a+kl-ih}{|a+kl-ih|}.
\end{equation}
In the case $a=0$ or $a=\frac{l}{1}$, the translation is parallel to the horizontal axis.
\end{pro}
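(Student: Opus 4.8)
The plan is to show that \emph{every} point of the two rows moves with the \emph{same} velocity $V_0$ given by \eqref{V_0-gen}, and then to conclude that the rigid translation $z_m(t)=ml+V_0t$, $\tilde z_m(t)=a+ml-ih+V_0t$ solves the associated infinite Hamiltonian system. Passing to complex notation, where $w^\perp$ corresponds to $iw$, the velocity induced at a point $z$ of the first row reads
\begin{equation*}
i\sum_{m\neq k\in\Z}G'(|z-z_k|)\frac{z-z_k}{|z-z_k|}-i\sum_{k\in\Z}G'(|z-\tilde z_k|)\frac{z-\tilde z_k}{|z-\tilde z_k|},
\end{equation*}
with $z_k=kl$ and $\tilde z_k=a+kl-ih$, and analogously for a point of the second row. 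The essential structural difference with the critical case of Proposition~\ref{Prop-PV} is that here hypothesis (H2) will make all the sums \emph{absolutely} convergent, so that no symmetry sum is needed and the reindexings below are fully justified.

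First I would establish convergence. For two points of the same row the separation is $|z_m-z_k|=l|m-k|$, which tends to infinity linearly, so by (H2) the corresponding terms are bounded by $C/(l|m-k|)^{1+\beta_1}$ and the self--interaction series converges absolutely since $\sum_{j\neq0}|j|^{-(1+\beta_1)}<\infty$. The same bound controls the cross terms, because $|z_m-\tilde z_k|=|a+(m-k)l-ih|$ also grows linearly in $|m-k|$; the finitely many small--separation terms are harmless since $G$ is smooth off zero and the points are distinct ($l>0$, $h\neq0$).

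Next I would compute the velocity at $z_m=ml$. In the self--interaction sum set $j=m-k$; then $\frac{z_m-z_k}{|z_m-z_k|}=\mathrm{sign}(j)$ is odd in $j$ while $G'(l|j|)$ is even, so the absolutely convergent series cancels termwise and vanishes (this is exactly the statement that a single horizontal array is stationary, cf.\ Proposition~\ref{Prop-euler_statarrow}). In the cross sum the shift $k\mapsto k+m$ removes the dependence on $m$ and produces $z_m-\tilde z_k=-(a+kl-ih)$, so that together with the prefactor $-i$ it becomes precisely \eqref{V_0-gen}; hence every point of the first row has velocity $V_0$. For a point $\tilde z_m=a+ml-ih$ of the second row the second--row self--interaction vanishes by the same oddness argument, while the remaining sum, after $j=m-k$, is already in the form \eqref{V_0-gen} and equals $V_0$ directly. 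Since all points share the velocity $V_0$ and a rigid translation preserves every difference $z_m-z_k$ and $z_m-\tilde z_k$, the translating ansatz is consistent and solves the system.

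Finally, for the horizontal claim I would split $V_0$ into components. Writing $\rho_k=|a+kl-ih|=\sqrt{(a+kl)^2+h^2}$, the horizontal part of $V_0$ is $h\sum_k G'(\rho_k)/\rho_k$ and the vertical part is $\sum_k G'(\rho_k)(a+kl)/\rho_k$. For $a=0$ the vertical summand is odd in $k$ and cancels; for $a=\frac{l}{2}$ it is antisymmetric under the involution $k\mapsto-k-1$, which sends $a+kl\mapsto-(a+kl)$ and fixes $\rho_k$, and again cancels. In both cases $V_0$ is real, i.e.\ the street translates horizontally. The only genuinely delicate point is the absolute convergence granted by (H2); once that is in hand every remaining step is an elementary cancellation, in contrast with the conditionally convergent, sine--product argument required in the critical case.
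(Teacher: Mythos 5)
Your proposal is correct and follows essentially the same route as the paper's proof: absolute convergence of all sums from (H2), vanishing of the self--interaction sums by the odd/even cancellation, reindexing of the cross sums to obtain \eqref{V_0-gen}, and cancellation of the vertical component when $a=0$ or $a=\frac{l}{2}$. Your only addition is to spell out the involution $k\mapsto -k-1$ justifying the cancellation in the case $a=\frac{l}{2}$, a step the paper asserts without detail.
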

\begin{rem}
From now on, we will assume that $G$ is radial via (H1), we will write $G$ for $\tilde{G}$ when there is no confusion in order to simplify notation.
\end{rem}
\begin{rem}
The second hypothesis is required to give a meaning to the infinite sum, which converges absolutely. This condition could be weakened by assuming
$$
\sum_{k\in\Z}\left|G'(a+kl-ih)\right|<+\infty.
$$
\end{rem}
\begin{proof}
As in the previous models, the velocity at the points is given by
\begin{align*}
-iv(ml)=&\sum_{m\neq k\in\Z} G'(|ml-kl|)\frac{ml-kl}{|ml-kl|}\\
&-\sum_{k\in\Z} G'(|ml-a-kl+ih|)\frac{ml-a-kl+ih}{|ml-a-kl+ih|},\\
-iv(a+ml-ih)=&\sum_{k\in\Z} G'(|a+ml-ih-kl|)\frac{a+ml-ih-kl}{|a+ml-ih-kl|}\\
&-\sum_{m\neq k\in\Z} G'(|ml-kl|)\frac{ml-kl}{|ml-kl|},
\end{align*}
for $m\in\Z$. The above sums are converging due to the second assumption. We can check that the velocity is the same at every point of the street:
\begin{align*}
-iv(ml)=&\sum_{m\neq k\in\Z} G'(|ml-kl|)\frac{ml-kl}{|ml-kl|}\\
&-\sum_{k\in\Z} G'(|ml-a-kl+ih|)\frac{ml-a-kl+ih}{|ml-a-kl+ih|}\\
=&\sum_{0\neq k\in\Z} G'(|kl|)\frac{kl}{|kl|}+\sum_{k\in\Z} G'(|a+kl-ih|)\frac{a+kl-ih}{|a+kl-ih|}\\
=&\sum_{k\in\Z} G'(|a+kl-ih|)\frac{a+kl-ih}{|a+kl-ih|}\\
=&-iv(0),\\
-iv(a+ml-ih)=&\sum_{k\in\Z} G'(|a+ml-ih-kl|)\frac{a+ml-ih-kl}{|a+ml-ih-kl|}\\
&-\sum_{m\neq k\in\Z} G'(|ml-kl|)\frac{ml-kl}{|ml-kl|}\\
=&\sum_{k\in\Z} G'(|a-ih-kl|)\frac{a-ih-kl}{|a-ih-kl|}-\sum_{0\neq k\in\Z} G'(|kl|)\frac{kl}{|kl|}\\
=&-iv(0).
\end{align*}
Then,
$$
V_0=v(0)=i\sum_{k\in\Z} G'(|a+kl-ih|)\frac{a+kl-ih}{|a+kl-ih|}.
$$
If $a=0$ or $a=\frac{l}{2}$, one has that
$$
\sum_{k\in\Z} G'(|a+kl-ih|)\frac{a+kl}{|a+kl-ih|}=0,
$$
and the translation is in the horizontal direction.
\end{proof}
In the general case, we also have that an array is stationary.
\begin{pro}
Let $G:\R^2\rightarrow \R$ be a smooth off zero function satisfying (H1)-(H2), then
\begin{equation*}
q(x)=\sum_{k\in\Z}\delta_{(a+kl,-h)}(x),
\end{equation*}
is stationary for any $a\in\R$ and $h\in\R$.
\end{pro}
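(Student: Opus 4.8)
The plan is to mirror the strategy already used in the proofs of Proposition~\ref{Gen-point} and Proposition~\ref{Prop-euler_statarrow}: I would compute the self--induced velocity at an arbitrary point of the array and show that it vanishes by an odd--symmetry cancellation. Writing the points in complex notation as $z_k=a+kl-ih$, hypothesis (H2) guarantees that the interaction sum converges absolutely, so the velocity at the point $z_m$ is well defined and, exactly as prescribed by \eqref{dyn-syst}, is given by
\begin{equation*}
-iv(a+ml-ih)=\sum_{m\neq k\in\Z} G'(|z_m-z_k|)\frac{z_m-z_k}{|z_m-z_k|}.
\end{equation*}

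The next step exploits the lattice structure. Since $z_m-z_k=(m-k)l\in\R$, the difference depends only on $j:=m-k$, so after reindexing the velocity is the \emph{same} at every point of the array, and it suffices to evaluate
\begin{equation*}
-iv(a+ml-ih)=\sum_{0\neq j\in\Z} G'(|jl|)\,\textnormal{sign}(j),
\end{equation*}
where I have used $l>0$ to simplify $\frac{jl}{|jl|}=\textnormal{sign}(j)$. The crucial observation is then a pairwise cancellation $j\leftrightarrow -j$: by the radial assumption (H1) the factor $G'(|jl|)$ depends only on $|j|$, while $\textnormal{sign}(j)$ is odd, so the contributions of $j$ and $-j$ cancel exactly and the whole sum is zero. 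Absolute convergence coming from (H2) (or, if one prefers, the symmetric--sum convention \eqref{sym-sum}) justifies the rearrangement, and I conclude $v\equiv 0$, so the array does not move and the configuration is stationary.

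I do not expect any genuine obstacle here. The only delicate point is giving meaning to the infinite sum, which is precisely what (H2) provides; once that is in place the radial hypothesis (H1) forces $G'(|jl|)$ to depend on $|j|$ alone, and the cancellation is immediate. In spirit this proposition is the ``half'' of Proposition~\ref{Gen-point} in which only one of the two arrays is present, and the second array having been removed, the velocity that drove the horizontal translation in \eqref{V_0-gen} is no longer there, leaving a purely self--induced velocity that vanishes by symmetry.
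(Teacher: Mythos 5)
Your proof is correct and follows exactly the argument the paper leaves implicit: the paper states this proposition without proof precisely because it is the single-array piece of the computation in Proposition~\ref{Gen-point}, where the same cancellation $\sum_{0\neq k\in\Z} G'(|kl|)\frac{kl}{|kl|}=0$ by the odd symmetry $k\leftrightarrow -k$ already appears, with (H2) guaranteeing absolute convergence. Nothing is missing.
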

It is clear that $G=\frac{C_{\beta}}{2\pi}\frac{1}{|\cdot|^\beta}$, for $\beta\in(0,1)$, satisfies the hypothesis of the above results. In the case of the QGSW interaction, we obtain similar results. In this case, the stream function associated to \eqref{PointVortex} is given by
\begin{equation}\label{QGSW-psi1}
\psi(x)=-\frac{1}{2\pi}\sum_{k\in \Z} K_0(\lambda |x-kl|)+\frac{1}{2\pi}\sum_{k\in\Z} K_0(\lambda |x-a-kl+ih|).
\end{equation}
The definition and some properties of the Bessel functions can be found in Appendix \ref{Ap-specialfunctions}. The above sum is convergent due to the behavior of $K_0$ at infinity, which is exponential:
\begin{equation*}
K_0(z)\sim \sqrt{\frac{\pi}{2z}}e^{-z},\quad |\textnormal{arg}(z)|<\frac{3}{2}\pi.
\end{equation*}
There is another representation of the stream function given in \cite{VladimirGryanikBorthOlbers}, where the periodicity structure is emphasized:
\begin{align}\label{QGSW-psi2}
\psi(x_1,x_2)=&-\frac{1}{a}\sum_{k\in \Z}\frac{\exp\left(-\sqrt{\left(\frac{2\pi k}{a}\right)^2+\lambda^2}|x_2|\right)}{\sqrt{\left(\frac{2\pi k}{a}\right)^2+\lambda^2}}\cos\left(\frac{2\pi k}{a}x_1\right)\nonumber\\
&+\frac{1}{a}\sum_{k\in \Z}\frac{\exp\left(-\sqrt{\left(\frac{2\pi k}{a}\right)^2+\lambda^2}|x_2+h|\right)}{\sqrt{\left(\frac{2\pi k}{a}\right)^2+\lambda^2}}\cos\left(\frac{2\pi k}{a}(x_1-a)\right).
\end{align}
Then, we state the result concerning the QGSW interaction.
\begin{pro}\label{Prop-PV-QGSW}
Given the point vortex street \eqref{PointVortex}, with $h\neq 0$, $l>0$ and $a\in\R$, then the street translates with the following constant velocity speed 
\begin{equation}\label{velocity_PV-QGSW}
V_0=\frac{\lambda i}{2\pi}\sum_{ k\in\Z}K_1(\lambda|a+kl-ih|)\frac{a+kl-ih}{|a+kl-ih|}.
\end{equation}
In the case that $a=0$ or $a=\frac{l}{2}$, the translation is parallel to the horizontal axis.
\end{pro}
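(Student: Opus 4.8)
The plan is to obtain Proposition \ref{Prop-PV-QGSW} as a direct application of the general subcritical result, Proposition \ref{Gen-point}, once the QGSW kernel is shown to satisfy its hypotheses. Consistently with the stream function \eqref{QGSW-psi1}, I write the interaction as $\tilde{G}(r)=-\frac{1}{2\pi}K_0(\lambda r)$, where we may assume $\lambda>0$ since $G$ depends only on $|\lambda|$. Then $G(x)=\tilde G(|x|)$ is manifestly radial, so hypothesis (H1) holds immediately. The physically correct normalization with the factor $\frac{1}{2\pi}$ is precisely the one appearing in \eqref{QGSW-psi1}, the fundamental solution of $-\Delta+\lambda^2$ in the plane, and keeping track of it is what will produce the constant in \eqref{velocity_PV-QGSW}.

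For (H2), I would invoke the asymptotics of the Bessel functions recalled in Appendix \ref{Ap-specialfunctions} together with the differentiation identity $K_0'(z)=-K_1(z)$, which gives $\tilde{G}'(r)=\frac{\lambda}{2\pi}K_1(\lambda r)$. Since $K_1(z)\sim\sqrt{\frac{\pi}{2z}}\,e^{-z}$ as $z\to+\infty$, the derivative $\tilde{G}'$ decays exponentially and is therefore bounded by $\frac{C}{r^{1+\beta_1}}$ for any $\beta_1\in(0,1]$ and all $r\geq R$; thus (H2) is satisfied with ample room to spare. Because the point configuration \eqref{PointVortex} coincides with \eqref{PointVortex-gen}, Proposition \ref{Gen-point} then applies verbatim and guarantees that the street translates with constant speed.

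It remains only to substitute. Plugging $\tilde{G}'(r)=\frac{\lambda}{2\pi}K_1(\lambda r)$ into the general velocity formula \eqref{V_0-gen} yields
\[
V_0=i\sum_{k\in\Z}\frac{\lambda}{2\pi}K_1(\lambda|a+kl-ih|)\frac{a+kl-ih}{|a+kl-ih|}=\frac{\lambda i}{2\pi}\sum_{k\in\Z}K_1(\lambda|a+kl-ih|)\frac{a+kl-ih}{|a+kl-ih|},
\]
which is exactly \eqref{velocity_PV-QGSW}. For the final assertion, the horizontal direction of the translation when $a=0$ or $a=\frac{l}{2}$ is inherited directly from the cancellation already established in the proof of Proposition \ref{Gen-point}: pairing the index $k$ with $-k$ (resp. with $-(k+1)$) leaves the magnitudes $|a+kl-ih|$ unchanged while reversing the sign of the real parts $a+kl$, so that the real contributions cancel in the symmetric sum and $V_0$ is a purely imaginary multiple of $i$, i.e. a real number.

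I do not anticipate a serious obstacle, as the statement is essentially a corollary of Proposition \ref{Gen-point}; the only points requiring genuine care are the bookkeeping of the $\frac{1}{2\pi}$ normalization so that the output matches \eqref{velocity_PV-QGSW}, and the correct use of the identity $K_0'=-K_1$ together with the exponential decay estimate to certify (H2). As an independent consistency check, one could instead differentiate the stream function \eqref{QGSW-psi1} term by term, justified by the exponential decay of $K_0$ and $K_1$, and evaluate the resulting velocity at the vortex locations; this should reproduce the same expression.
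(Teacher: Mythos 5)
Your proposal is correct and matches the paper's approach: the paper states Proposition \ref{Prop-PV-QGSW} as an instance of the subcritical result, Proposition \ref{Gen-point}, relying on the exponential decay of the Bessel kernel at infinity, which is exactly your verification of (H1)--(H2) followed by substituting $\tilde{G}'(r)=\frac{\lambda}{2\pi}K_1(\lambda r)$ (via $K_0'=-K_1$) into \eqref{V_0-gen}. Your bookkeeping of the $\frac{1}{2\pi}$ normalization from \eqref{QGSW-psi1} and the pairing argument for the horizontal direction are both consistent with what the paper does in the proof of Proposition \ref{Gen-point}.
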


\section{Periodic patterns in the Euler and QGSW equations}\label{Sec2}
This section is devoted to show the full construction of the K\'arm\'an Vortex Street structures in the Euler equations. 
Instead of considering two arrows of points as in Section \ref{Sec-NVP}, we consider two infinite arrows of patches distributed in the same way than the arrows of points \eqref{PointVortex}. We will refer to this configuration in the Euler equations as K\'arm\'an Vortex Patch Street.

In the case of arrows of points, we showed in the last section that they translate. Here, we want to find a similar evolution in the Euler equations. Since these structures are periodic is space, first we will have to look for the green function associated to the $-\Delta$ operator in $\T\times\R$, which will come as an infinite sum of functions. This infinite sum can be expressed in terms of elementary functions, which helps us in the computations. Once we have the equation that will characterize the K\'arm\'an Vortex Patch Street, we will have to deal with the Implicit Function theorem. Hence, a desingularization of the K\'arm\'an Point Vortex Street will show the existence of these structures in terms of finite area domains that translate.

At the end of this section, we will analyze the case of the QGSW equations, which will follow similarly. Let us focus now in the Euler equations:

\begin{eqnarray*}	           
       \left\{\begin{array}{ll}
          	\omega_t+(v\cdot \nabla) \omega=0, &\text{ in $[0,+\infty)\times\mathbb{R}^2$}, \\
         	 v=K*\omega,&\text{ in $[0,+\infty)\times\mathbb{R}^2$}, \\
         	 \omega(t=0,x)=\omega_0(x),& \text{ with $x\in\mathbb{R}^2$}.
       \end{array}\right.
\end{eqnarray*}
The second equation links the velocity to the vorticity through the Biot--Savart  law, where $K(x)=\frac{1}{2\pi}\frac{x^\perp}{|x|^2}$ and $x^\perp=(-x_2,x_1)$. We denote by $\psi$ the stream function, which verifies $v=\nabla^\perp \psi$.

From now on we will use complex notation in order to simplify the computations. Then, we identify $(x_1,x_2)\in\R^2$ with $x_1+ix_2\in\C$. In the same way, $x^\perp=ix$. Moreover, the gradient operator in $\R^2$ can be identified with the Wirtinger derivative, i.e.,
\begin{equation}\label{gradient-complex}
\nabla=2\partial_{\overline{z}}, \quad \partial_{\overline{z}}\varphi(z):=\frac12\left(\partial_{1}\varphi(z)+i\partial_{2} \varphi(z)\right),
\end{equation}
for a complex function $\varphi$.

\subsection{Velocity of the K\'arm\'an Vortex Patch Street}
Consider the initial condition given by
\begin{align}\label{FAPointVortex}
\omega_0(x_1,x_2)=&\frac{1}{\pi}\sum_{k\in\Z}{\bf{1}}_{D_1}(x_1-kl,x_2)-\frac{1}{\pi}\sum_{k\in\Z}{\bf{1}}_{D_2}(x_1-kl,x_2)\nonumber\\
=&\frac{1}{\pi}\sum_{k\in\Z}{\bf{1}}_{D_1+kl}(x_1,x_2)-\frac{1}{\pi}\sum_{k\in\Z}{\bf{1}}_{D_2+kl}(x_1,x_2),
\end{align}
where $D_1$ and $D_2$ are simply--connected bounded domains such that $|D_1|=|D_2|$, and $l>0$.

The velocity field can be computed through the Biot--Savart law in $\T\times\R$. For that, one must find the Green function associated to the $-\Delta$ operator in order to have an expression for the stream function $\psi$. Later, we just use that $v=\nabla^\perp \psi$, or with the complex notation, $v=2i\partial_{\overline{z}} \psi$. This will be developed in the next result, obtaining different expressions for the velocity, which will be useful later.

\begin{pro}\label{Prop-velocity}
The velocity field of the Euler equations associated to \eqref{FAPointVortex} is given by the following expressions:
\begin{enumerate}
\item \begin{align*}
v_0(x)=&-\frac{1}{2\pi^2}{\int_{\partial D_1}\ln\left|\sin\left(\frac{\pi(x-\xi)}{l}\right)\right|\, d\xi}+\frac{1}{2\pi^2}{\int_{\partial D_2}\ln\left|\sin\left(\frac{\pi(x-\xi)}{l}\right)\right|\, d\xi}.
\end{align*}
\item\label{v3} $$v_0(x)=\frac{i}{2l\pi}\overline{\int_{D_1}\cot\left[\frac{\pi(x-y)}{l}\right]\, dA(y)}-\frac{i}{2l\pi}\overline{\int_{D_2}\cot\left[\frac{\pi(x-y)}{l}\right]\, dA(y)}.$$
\item\label{v4} \begin{align*}
v_0(x)=&\frac{1}{4\pi^2}\overline{{\int_{\partial D_1}\frac{\overline{x}-\overline{\xi}}{x-\xi} \, d\xi}}-\frac{1}{2\pi^2}{\int_{\partial D_1}\ln\left|H\left(\frac{\pi(x-\xi)}{l}\right)\right| \, d\xi}\nonumber\\
&-\frac{1}{4\pi^2}\overline{{\int_{\partial D_2}\frac{\overline{x}-\overline{\xi}}{x-\xi} \, d\xi}}-\frac{1}{2\pi^2}{\int_{\partial D_2}\ln\left|H\left(\frac{\pi(x-\xi)}{l}\right)\right| \, d\xi},
\end{align*}
with
\begin{equation}\label{H}
H(z)=1+\sum_{k\geq 1}\frac{(-1)^k}{(2k+1)!}z^{2k}=\frac{\sin(z)}{z}.
\end{equation}
\end{enumerate}
\end{pro}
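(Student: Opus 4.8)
The plan is to compute the stream function $\psi$ once and then read off the three formulas by differentiating it in three slightly different ways, using throughout that $v_0=\nabla^\perp\psi=2i\partial_{\bar z}\psi$ by \eqref{gradient-complex}. So I first build $\psi$. Writing $\psi=\frac{1}{2\pi}\ln|\cdot|*\omega_0$ for $\omega_0$ as in \eqref{FAPointVortex} and grouping the convolution cell by cell, periodicity turns the integral over $\R^2$ into
\[
\psi(x)=\frac{1}{2\pi^2}\int_{D_1}\sum_{k\in\Z}\ln|x-y-kl|\,dA(y)-\frac{1}{2\pi^2}\int_{D_2}\sum_{k\in\Z}\ln|x-y-kl|\,dA(y).
\]
The inner symmetric sum is exactly the one treated in Proposition \ref{Prop-PV}: by the sine product formula \eqref{sine} it equals $\ln\big|\sin(\pi(x-y)/l)\big|$ up to an additive constant independent of $x,y$. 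Because $|D_1|=|D_2|$, that constant contributes equally to both terms and cancels, leaving
\[
\psi(x)=\frac{1}{2\pi^2}\int_{D_1}\ln\Big|\sin\big(\tfrac{\pi(x-y)}{l}\big)\Big|\,dA(y)-\frac{1}{2\pi^2}\int_{D_2}\ln\Big|\sin\big(\tfrac{\pi(x-y)}{l}\big)\Big|\,dA(y).
\]

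With $\psi$ in hand, expression \eqref{v3} comes from differentiating under the integral sign. Since $\sin(\pi(x-y)/l)$ is holomorphic in $x$, only the conjugate logarithm survives $\partial_{\bar x}$, so $\partial_{\bar x}\ln|\sin(\pi(x-y)/l)|=\frac{\pi}{2l}\,\overline{\cot(\pi(x-y)/l)}$, and multiplying by $2i$ yields \eqref{v3}. For formula (1) I would instead exploit that the integrand depends only on $x-y$, so $\partial_{\bar x}=-\partial_{\bar y}$ on it; moving the derivative onto $y$ and integrating by parts through the complex Stokes formula $\int_{\partial D}F\,d\xi=2i\int_D\partial_{\bar\xi}F\,dA$ (Appendix \ref{Ap-potentialtheory}) converts each area integral of $\partial_{\bar y}\ln|\sin|$ into the boundary integral $\int_{\partial D_j}\ln|\sin(\pi(x-\xi)/l)|\,d\xi$, and keeping track of the constants reproduces formula (1).

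Finally, expression (3) is a splitting of (1). Writing $\sin(z)=z\,H(z)$ with $H$ as in \eqref{H} gives $\ln|\sin(\pi(x-\xi)/l)|=\ln|x-\xi|+\ln(\pi/l)+\ln|H(\pi(x-\xi)/l)|$, and the constant $\ln(\pi/l)$ integrates to zero against $d\xi$ over the closed curve $\partial D_j$. The only piece needing work is the Rankine-type term $\int_{\partial D_j}\ln|x-\xi|\,d\xi$: using $\partial_{\bar y}\ln|x-y|=-\tfrac12(\bar x-\bar y)^{-1}$ together with $\partial_{\bar y}\big[(\bar x-\bar y)/(x-y)\big]=-(x-y)^{-1}$ and the same Stokes formula converts it into $\mp\frac{1}{4\pi^2}\,\overline{\int_{\partial D_j}\frac{\bar x-\bar\xi}{x-\xi}\,d\xi}$, the self-induced velocity of a single periodic patch; collecting terms then gives (3).

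The main obstacle is analytic rather than algebraic: it is the conditionally convergent periodic sum. I must justify that the symmetric sum $\sum_{k}\ln|x-y-kl|$ converges only after the sine regularization, crucially using $|D_1|=|D_2|$ to cancel the divergent additive constant, that it may be differentiated term by term to produce the rapidly convergent $\cot$ series, and that the Stokes manipulations remain valid when $x$ lies inside $D_j$, where $\ln|\sin|$ has an integrable logarithmic singularity and $\cot$ an integrable $|x-y|^{-1}$ singularity. The singular case is handled by excising a disc of radius $\varepsilon$ about $x$ and checking that the extra boundary contribution is $O(\varepsilon\ln\varepsilon)\to0$, while the convergence of the series follows from the explicit sine product bound.
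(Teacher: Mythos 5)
Your proposal is correct and follows essentially the same route as the paper: truncate the sum, use the sine product formula \eqref{sine} with the cancellation coming from $|D_1|=|D_2|$ to get the periodic stream function, obtain (2) by differentiating under the integral via $2\partial_{\overline{x}}\ln|\sin(x)|=\overline{\cot x}$, obtain (1) by moving $\partial_{\overline{x}}$ onto $y$ and applying the complex Stokes formula, and obtain (3) by splitting $\sin(z)=zH(z)$ and converting the Newtonian piece into the boundary integral of $(\overline{x}-\overline{\xi})/(x-\xi)$ (your Stokes identity $\partial_{\overline{y}}\bigl[(\overline{x}-\overline{y})/(x-y)\bigr]=-(x-y)^{-1}$ is exactly the special case of the Cauchy--Pompeiu formula \eqref{Cauchy-Pom} that the paper invokes). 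Your extra care with the conditionally convergent sum and the excision of the singularity is sound and, if anything, more detailed than the paper; note also that your sign bookkeeping reproduces the final display of the paper's proof, which has $+\frac{1}{2\pi^2}\int_{\partial D_2}\ln|H|\,d\xi$, rather than the sign printed in the proposition statement.
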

\begin{proof}
\medskip
\noindent
(1) Let us begin finding the stream function associated to 
$$
\omega_{0,K}(x_1,x_2)=\frac{1}{\pi}\sum_{|k|\leq K}{\bf{1}}_{D_1+kl}(x_1,x_2)-\frac{1}{\pi}\sum_{|k|\leq K}{\bf{1}}_{D_2+kl}(x_1,x_2),
$$
by superposing the stream function of each one of the elements of the sum, i.e.,
 \begin{equation}\label{psiK}
\psi_{0,K}(x)=\frac{1}{2\pi^2}\sum_{|k|\leq K}\int_{D_1}\ln|x-y-kl|\, dA(y)-\frac{1}{2\pi^2}\sum_{|k|\leq K}\int_{D_2}\ln|x-y-kl|\, dA(y).
 \end{equation}
Using the same idea than in Proposition \ref{Prop-PV}, we find that
$$
\sum_{|k|\leq K}\ln|x-y-kl|=\ln\left|\frac{\pi(x-y)}{l}\prod_{k=1}^{K}\left(1-\frac{(x-y)^2}{k^2l^2}\right)\right|+\ln\left|\frac{l}{\pi}\prod_{k=1}^{K}k^2l^2\right|,
$$
and hence
\begin{align*}
\psi_{0,K}(x)=&\frac{1}{2\pi^2}\int_{D_1}\ln\left|\frac{\pi(x-y)}{l}\prod_{k=1}^{K}\left(1-\frac{(x-y)^2}{k^2l^2}\right)\right|\, dA(y)+\frac{1}{2\pi^2}\ln\left|\frac{l}{\pi}\prod_{k=1}^{K}k^2l^2\right||D_1|\\
&-\frac{1}{2\pi^2}\int_{D_2}\ln\left|\frac{\pi(x-y)}{l}\prod_{k=1}^{K}\left(1-\frac{(x-y)^2}{k^2l^2}\right)\right|\, dA(y)-\frac{1}{2\pi^2}\ln\left|\frac{l}{\pi}\prod_{k=1}^{K}k^2l^2\right||D_2|.
\end{align*}
Using that $D_1$ and $D_2$ have same area, it follows that
\begin{align*}
\psi_{0,K}(x)=&\frac{1}{2\pi^2}\int_{D_1}\ln\left|\frac{\pi(x-y)}{l}\prod_{k=1}^{K}\left(1-\frac{(x-y)^2}{k^2l^2}\right)\right|\, dA(y)\\
&-\frac{1}{2\pi^2}\int_{D_2}\ln\left|\frac{\pi(x-y)}{l}\prod_{k=1}^{K}\left(1-\frac{(x-y)^2}{k^2l^2}\right)\right|\, dA(y),
\end{align*}
where the sine formula \eqref{sine} yields
$$
\psi_0(x)=\frac{1}{2\pi^2}{\int_{D_1}\ln\left|\sin\left(\frac{\pi(x-y)}{l}\right)\right|\, dA(y)}-\frac{1}{2\pi^2}{\int_{D_2}\ln\left|\sin\left(\frac{\pi(x-y)}{l}\right)\right|\, dA(y)}.
$$
Then,
\begin{align}\label{velocity}
v_0(x)=&\frac{i\partial_{\overline{x}}}{\pi^2}{\int_{D_1}\ln\left|\sin\left(\frac{\pi(x-y)}{l}\right)\right|\, dA(y)}-\frac{i\partial_{\overline{x}}}{\pi^2}{\int_{D_2}\ln\left|\sin\left(\frac{\pi(x-y)}{l}\right)\right|\, dA(y)}\nonumber\\
=&-\frac{1}{\pi^2}{\int_{D_1}i\partial_{\overline{y}}\ln\left|\sin\left(\frac{\pi(x-y)}{l}\right)\right|\, dA(y)}+\frac{1}{\pi^2}{\int_{D_2}i\partial_{\overline{y}}\ln\left|\sin\left(\frac{\pi(x-y)}{l}\right)\right|\, dA(y)}\nonumber\\
=&-\frac{1}{2\pi^2}{\int_{\partial D_1}\ln\left|\sin\left(\frac{\pi(x-\xi)}{l}\right)\right|\, d\xi}+\frac{1}{2\pi^2}{\int_{\partial D_2}\ln\left|\sin\left(\frac{\pi(x-\xi)}{l}\right)\right|\, d\xi}.
\end{align}
The Stokes theorem, see Appendix \ref{Ap-potentialtheory}, has been applied in the last line.

\medskip
\noindent
(2) This expression comes from \eqref{velocity} and
$$
2\partial_{\overline{x}} \ln |\sin(x)|=\overline{\cot x},
$$
used in Proposition \ref{Prop-PV}.

\medskip
\noindent
(3) From (1), we can use the series expansion of the complex sine,
$$
\sin(z)=zH(z),\quad H(z)=1+\sum_{k\geq 1}\frac{(-1)^k}{(2k+1)!}z^{2k},
$$
in order to obtain
\begin{align*}
\ln\left|\sin\left(\frac{\pi(x-\xi)}{l}\right)\right|&=\ln\left|\frac{\pi(x-\xi)}{l}\right|+\ln\left|1+\sum_{k\geq 1}\frac{(-1)^k}{(2k+1)!}\frac{\pi^{2k}}{l^{2k}}(x-\xi)^{2k}\right|\\
&=\ln\left|\frac{\pi(x-\xi)}{l}\right|+\ln\left|H\left(\frac{\pi(x-\xi)}{l}\right)\right|.
\end{align*}
Then, we have
\begin{align*}
v_0(x)=&-\frac{1}{2\pi^2}{\int_{\partial D_1}\ln\left|\frac{\pi(x-\xi)}{l}\right| \, d\xi}-\frac{1}{2\pi^2}{\int_{\partial D_1}\ln\left|H\left(\frac{\pi(x-\xi)}{l}\right)\right| \, d\xi}\\
&+\frac{1}{2\pi^2}{\int_{\partial D_2}\ln\left|\frac{\pi(x-\xi)}{l}\right| \, d\xi}+\frac{1}{2\pi^2}{\int_{\partial D_2}\ln\left|H\left(\frac{\pi(x-\xi)}{l}\right)\right| \, d\xi}.
\end{align*}
Moreover, the Stokes formula \eqref{Stokes} yields
\begin{align*}
v_0(x)=&\frac{i}{2\pi^2}\overline{\int_{D_1}\frac{1}{x-y}\, dA(y)}-\frac{1}{2\pi^2}{\int_{\partial D_1}\ln\left|H\left(\frac{\pi(x-\xi)}{l}\right)\right| \, d\xi}\\
&-\frac{i}{2\pi^2}\overline{\int_{D_2}\frac{1}{x-y}\, dA(y)}+\frac{1}{2\pi^2}{\int_{\partial D_2}\ln\left|H\left(\frac{\pi(x-\xi)}{l}\right)\right| \, d\xi}.
\end{align*}
Finally, let us now use the Cauchy--Pompeiu's formula \eqref{Cauchy-Pom} for the first and third terms, to find
 \begin{align*}
v_0(x)=&\frac{1}{4\pi^2}\overline{{\int_{\partial D_1}\frac{\overline{x}-\overline{\xi}}{x-\xi} \, d\xi}}-\frac{1}{2\pi^2}{\int_{\partial D_1}\ln\left|H\left(\frac{\pi(x-\xi)}{l}\right)\right| \, d\xi}\\
&-\frac{1}{4\pi^2}\overline{{\int_{\partial D_2}\frac{\overline{x}-\overline{\xi}}{x-\xi} \, d\xi}}-\frac{1}{2\pi^2}{\int_{\partial D_2}\ln\left|H\left(\frac{\pi(x-\xi)}{l}\right)\right| \, d\xi}.
\end{align*}
\end{proof}

\subsection{Functional setting of the problem}
The first step is to scale the vorticity \eqref{FAPointVortex} in order to introduce the point vortices in our formulation and be able to desingularize them. Let us define
\begin{equation}\label{omega_epsilon}
\omega_{0,\varepsilon}(x)=\frac{1}{\pi\varepsilon^2}\sum_{k\in\Z}{\bf{1}}_{\varepsilon D_1+kl}(x)-\frac{1}{\pi\varepsilon^2}\sum_{k\in\Z}{\bf{1}}_{\varepsilon D_2+kl}(x),
\end{equation}
for $l>0$ and $\varepsilon>0$. The domains $D_1$ and $D_2$ are simply--connected and bounded. In the case that $|D_1|=|\D|$  and $D_2=-D_1+a-ih$, with $a\in\R$ and $h\neq 0$, we find the point vortex street \eqref{PointVortex} passing to the limit $\varepsilon\rightarrow 0$:
\begin{equation}\label{omega00}
\omega_{0,0}(x)=\sum_{k\in\Z}\delta_{(kl,0)}(x)-\sum_{k\in\Z}\delta_{(a+kl,-h)}(x).
\end{equation}
Proposition \ref{Prop-PV} deals with \eqref{omega00} from the dynamical system point of view, showing that it translates. Moreover, if $a=0$ or $a=\frac{l}{2}$, the translation is horizontal.

Now, we try to find the equation that characterize a translating evolution in the Euler equations. Assume that we have  $\omega(t,x)=\omega_0(x-Vt)$, with $V\in\C$. Inserting this ansatz in the Euler equations, we arrived at
$$
\left(v_0(x)-V\right)\cdot \nabla\omega_0(x)=0,\quad x\in\R^2,
$$
where ``$\cdot$'' indicates the scalar product in $\R^2$. As in the previous section, we want to work always in the complex sense identifying $(x_1,x_2)\in\R^2$ as $x_1+ix_2\in\C$. The gradient operator can be identify to the $\partial_{\overline{z}}$ derivative as in \eqref{gradient-complex}. Then, we above equation can be written as:
$$
\textnormal{Re}\left[\overline{(v_0(x)-V)}\partial_{\overline{x}}\omega_0(x)\right]=0, \quad x\in\C.
$$
When working with the scaled vorticity $\omega_{0,\varepsilon}$, this equation must be understood in the weak sense, yielding
\begin{equation}\label{eq1}
\left(v_{0,\varepsilon}(x)-V\right)\cdot \vec{n}(x)=0,\quad x\in\partial\, (\varepsilon D_1+kl)\cup \partial\,  (\varepsilon D_2+kl),
\end{equation}
or similarly,
$$
\textnormal{Re}\left[\overline{(v_{0,\varepsilon}(x)-V)}\vec{n}(x)\right]=0, \quad x\in\partial\, (\varepsilon D_1+kl)\cup \partial\,  (\varepsilon D_2+kl),
$$
for any $k\in\Z$. Here, $\vec{n}$ is the exterior normal vector and $v_{0,\varepsilon}$ is the velocity associated to \eqref{omega_epsilon}. The expression of $v_{0,\varepsilon}$ coming from Proposition \ref{Prop-velocity}--\eqref{v3} gives us
$$
v_{0,\varepsilon}(x)=\frac{i}{2l\pi \varepsilon^2}\overline{\int_{\varepsilon D_1}\cot\left[\frac{\pi(x-y)}{l}\right]\, dA(y)}-\frac{i}{2l \pi\varepsilon^2}\overline{\int_{\varepsilon D_2}\cot\left[\frac{\pi(x-y)}{l}\right]\, dA(y)}.
$$
We can check that $v_{0,\varepsilon}(x+kl)=v_{0,\varepsilon}(x)$, for any $k\in\Z$. Moreover, we have that $\vec{n}_{D+kl}(x+kl)=\vec{n}_{D}(x)$, for any simply--connected bounded domain $D$. Then, the equation \eqref{eq1} reduces to
\begin{align*}
\textnormal{Re}\left[\overline{(v_{0,\varepsilon}(x)-V)}\vec{n}(x)\right]=0,\quad x\in\partial\, D_1\cup \partial\,   D_2.
\end{align*}

Consider $D_2=-D_1+a-ih$, for $a=0$ or $a=\frac{l}{2}$, and $h\neq 0$. By using the relation between $D_1$ and $D_2$, the above system reduces to just one equation:
\begin{align*}
\textnormal{Re}\left[\overline{(v_{0,\varepsilon}(x)-V)}\vec{n}(x)\right]=0,\quad x\in\partial\, D_1,
\end{align*}
where
$$
v_{0,\varepsilon}(\varepsilon x)=\frac{i}{2l\pi }\overline{\int_{ D_1}\cot\left[\frac{\pi\varepsilon(x-y)}{l}\right]\, dA(y)}-\frac{i}{2l \pi}\overline{\int_{ D_1}\cot\left[\frac{\pi(\varepsilon(x+y)-a+ih)}{l}\right]\, dA(y)}.
$$
Other representations of the velocity field can be obtained using Proposition \ref{Prop-velocity}:
$$
v_{0,\varepsilon}(\varepsilon x)=-\frac{1}{2\pi^2\varepsilon}{\int_{\partial D_1}\ln\left|\sin\left(\frac{\pi(\varepsilon(x-\xi))}{l}\right)\right|\, d\xi}-\frac{1}{2\pi^2\varepsilon}{\int_{\partial D_1}\ln\left|\sin\left(\frac{\pi(\varepsilon(x+\xi)-a+ih)}{l}\right)\right|\, d\xi}.
$$
At this stage, we are going to introduce an exterior conformal map from $\T$ into $\partial D_1$ given by
\begin{equation}\label{phi-euler}
\phi(w)=i (w+\varepsilon f(w)), \quad f(w)=\sum_{n\geq 1}a_nw^{-n}, \quad a_n\in\R, w\in\T.
\end{equation}
For others values of $a$, one must readjust the conformal map, but here we will consider $a=0$ and $a=\frac{l}{2}$ having a horizontal translation in the point vortex system.

Note that $\vec{n}(\phi(w))=w\phi'(w)$. Then, we can rewrite the equation with the use of the above conformal map in the following way:
\begin{equation}\label{F-euler}
F_{E}(\varepsilon,f,V)(w):=\textnormal{Re}\left[\left\{\overline{I_E(\varepsilon,f)(w)}-\overline{V}\right\}{w}{\phi'(w)}\right]=0, \quad w\in\T,
\end{equation}
where
\begin{align}\label{Iepsilon}
I_E(\varepsilon,f)(w):=&-\frac{1}{2\pi^2\varepsilon}{\int_{\T}\ln\left|\sin\left(\frac{\pi(\varepsilon(\phi(w)-\phi(\xi)))}{l}\right)\right|\phi'(\xi)\, d\xi}\nonumber\\
&-\frac{1}{2\pi^2\varepsilon}{\int_{\T}\ln\left|\sin\left(\frac{\pi(\varepsilon(\phi(w)+\phi(\xi))-a+ih)}{l}\right)\right|\phi'(\xi)\, d\xi}.
\end{align}
Note that
$$
I_E(\varepsilon,f)(w)=v_{0,\varepsilon}(\varepsilon\phi(w)).
$$
As it is mentioned in the introduction, \cite{HmidiMateu-pairs} deals with the desingularization of a vortex pairs in both the Euler equations and the generalized quasi-geostrophic equation. In order to relate $F_E$ with the functional in \cite{HmidiMateu-pairs}, we can write $I_E$ as
\begin{align*}
I_E(\varepsilon,f)(w)=&\frac{1}{4\pi^2\varepsilon}\overline{{\int_{\T}\frac{\overline{\phi(w)}-\overline{\phi(\xi)}}{\phi(w)-\phi(\xi)}\phi'(\xi) \, d\xi}}-\frac{1}{2\pi^2 \varepsilon}{\int_{\T}\ln\left|H\left(\frac{\pi\varepsilon(\phi(w)-\phi(\xi))}{l}\right)\right| \phi'(\xi)\, d\xi}\nonumber\\
&+\frac{1}{4\pi^2\varepsilon}\overline{{\int_{\T}\frac{\varepsilon(\overline{\phi(w)}+\overline{\phi(\xi)})-a+ih}{\varepsilon(\phi(w)-\phi(\xi))-a+ih}\phi'(\xi) \, d\xi}}\\
&-\frac{1}{2\pi^2 \varepsilon}{\int_{\T}\ln\left|H\left(\frac{\pi(\varepsilon(\phi(w)+\phi(\xi))-a+ih)}{l}\right)\right| \phi'(\xi)\, d\xi}\nonumber\\
=&\frac{1}{4\pi^2\varepsilon}\overline{{\int_{\T}\frac{\overline{\phi(w)}-\overline{\phi(\xi)}}{\phi(w)-\phi(\xi)}\phi'(\xi) \, d\xi}}+\frac{1}{4\pi^2\varepsilon}\overline{{\int_{\T}\frac{\varepsilon(\overline{\phi(w)}+\overline{\phi(\xi)})-a+ih}{\varepsilon(\phi(w)-\phi(\xi))-a+ih}\phi'(\xi) \, d\xi}}\\
&+\tilde{I}_E(\varepsilon,f)(w),
\end{align*}
using the expression of the velocity written in Proposition \ref{Prop-velocity}-(3). The Residue Theorem amounts to
\begin{align}\label{exp-I}
I_E(\varepsilon,f)(w)=&\frac{1}{4\pi^2\varepsilon}\overline{{\int_{\T}\frac{\overline{\phi(w)}-\overline{\phi(\xi)}}{\phi(w)-\phi(\xi)}\phi'(\xi) \, d\xi}\nonumber}\\
&+\frac{1}{4\pi^2}\overline{{\int_{\T}\frac{\overline{\phi(\xi)}}{\varepsilon(\phi(w)-\phi(\xi))-a+ih}\phi'(\xi) \, d\xi}}+\tilde{I}_E(\varepsilon,f)(w)\nonumber\\
=&\hat{I}_E(\varepsilon,f)(w)+\tilde{I}_E(\varepsilon,f)(w).
\end{align}
Hence, the function $\hat{I}_E$ comes from the study of the vortex pairs in \cite{HmidiMateu-pairs}. We will take advantage of the study done in that work about $\hat{I}_E$. In this model, $\hat{I}_{E}$ indicates the contribution of just two vortex patches.

The first step is to check that we recover the point vortex street with this model. Remind that $a=0$ or $a=\frac{l}{2}$.
\begin{pro}\label{Prop-trivialsol}
For any  $h\neq 0$, $l>0$, the following equation is verified
$$
F_E(0, 0, V_0)(w)=0, \quad w\in\T,
$$
where $V_0$ is given by \eqref{velocity_PV}.
\end{pro}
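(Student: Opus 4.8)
The plan is to evaluate $I_E(\varepsilon,f)$ at $(\varepsilon,f)=(0,0)$ through the splitting $I_E=\hat I_E+\tilde I_E$ of \eqref{exp-I}, to insert the result into \eqref{F-euler}, and to check that the genuinely singular $\varepsilon^{-1}$ contributions drop out under $\textnormal{Re}[\,\cdot\,w\phi'(w)]$ while the regular ones reconstruct the point--vortex speed $V_0$ of \eqref{velocity_PV}. At $(\varepsilon,f)=(0,0)$ the conformal map \eqref{phi-euler} reduces to $\phi(w)=iw$, so $\phi'\equiv i$ and $w\phi'(w)=iw$. Throughout I would use $\overline w=1/w$ and $\overline\xi=1/\xi$ on $\T$, the residue theorem, and the elementary identity $\int_{\T}\phi'(\xi)\,d\xi=0$, which will be responsible for killing the apparent divergences.

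First I would isolate the self--interaction, namely the first integral of $\hat I_E$. With $\phi(w)=iw$ a residue computation gives $\frac{\overline{\phi(w)}-\overline{\phi(\xi)}}{\phi(w)-\phi(\xi)}=\frac{1}{w\xi}$, whence this term equals $-\frac{w}{2\pi\varepsilon}$. It is of order $\varepsilon^{-1}$, but it is tangential: its conjugate times $w\phi'(w)=iw$ equals the purely imaginary number $-\frac{i}{2\pi\varepsilon}$, so it contributes $0$ to $\textnormal{Re}[\,\cdot\,]$ for every $\varepsilon>0$. This is exactly what makes $F_E$ well defined at $\varepsilon=0$, and it encodes the fact that the self--induced velocity of a uniform disc is a rigid rotation, hence tangent to $\partial D_1$. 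I expect this cancellation of the $\varepsilon^{-1}$ singularity — rather than a mere bound — to be the main point of the argument.

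Next I would evaluate the remaining pieces in the limit $\varepsilon\to0$. The second integral of $\hat I_E$ is regular; setting $\varepsilon=0$ and applying the residue theorem yields the constant $\frac{i}{2\pi(a+ih)}$, i.e. the velocity induced at the origin by the single $k=0$ vortex of the lower row. In $\tilde I_E$, the first $\ln|H|$ integral is $O(\varepsilon)$ because $H(z)=1+O(z^2)$ by \eqref{H}, so it vanishes in the limit. The second $\ln|H|$ integral carries the prefactor $\varepsilon^{-1}$, so I would Taylor--expand its integrand to first order about the fixed point $z_0=\frac{\pi(ih-a)}{l}$: the $\xi$--constant zeroth order term is annihilated by $\int_{\T}\phi'(\xi)\,d\xi=0$ (this removes the spurious $\varepsilon^{-1}$ blow--up), and the first order term, computed with $\frac{H'}{H}=\cot-\frac1z$ and one residue, produces a $w$--independent constant (the part proportional to $\phi(w)$ again integrates to zero).

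Finally I would add the three contributions. The $\frac1z$ in $\frac{H'}{H}=\cot-\frac1z$ produces precisely $-\frac{i}{2\pi(a+ih)}$, cancelling the pair term $\frac{i}{2\pi(a+ih)}$ of $\hat I_E$ (no self double--counting), and what remains is $-\frac{i}{2l}\,\overline{\cot z_0}=\frac{1}{2li}\,\overline{\cot\big(\frac{\pi(ih-a)}{l}\big)}=V_0$, matching \eqref{velocity_PV}. Hence $I_E(0,0)(w)$ is the tangential term plus the constant $V_0$, and \eqref{F-euler} gives $F_E(0,0,V_0)(w)=\textnormal{Re}\big[(\overline{V_0}-\overline{V_0})\,iw\big]=0$ for all $w\in\T$. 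The only delicate steps are the two limit extractions — the tangency of the self term and the $\int_{\T}\phi'=0$ cancellation in $\tilde I_E$ — the rest being residue bookkeeping against the closed form of $V_0$.
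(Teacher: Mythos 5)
Your proof is correct, but it follows a different route from the paper's. The paper proves this proposition by applying the Stokes theorem to turn the $\ln|\sin|$ boundary integrals into \emph{area} integrals of the cotangent over $\D$; the interaction term is then handled in one piece, since by dominated convergence its limit is $\frac{i}{2l}\cot\bigl(\frac{\pi(ih-a)}{l}\bigr)|\D|/\pi$, which equals $\overline{V_0}$ directly by the closed form \eqref{velocity_PV}, while the self--interaction term is treated with the expansion $\cot(z)=\frac1z+zT(z)$, the $\frac1z$ part being tangential exactly as you found. You instead stay on the boundary and use the decomposition \eqref{exp-I} into $\hat I_E$ (the two Cauchy--type integrals of \cite{HmidiMateu-pairs}) and $\tilde I_E$ (the $\ln|H|$ integrals): the self Cauchy term is computed exactly by residues and is tangential for every $\varepsilon$; the pair Cauchy term gives $\frac{i}{2\pi(a+ih)}$ at $\varepsilon=0$; the divergent-looking $\ln|H|$ term is tamed by $\int_{\T}\phi'(\xi)\,d\xi=0$ and its linearization, via $\frac{H'}{H}=\cot-\frac1z$, supplies $V_0-\frac{i}{2\pi(a+ih)}$, the $\frac{1}{z_0}$ piece cancelling the pair term. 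I checked your residue computations (in particular $-\frac{i}{2l}\overline{\cot z_0}=\frac{1}{2li}\overline{\cot z_0}=V_0$ and the cancellation $\frac{i}{2l}\overline{z_0^{-1}}=-\frac{i}{2\pi(a+ih)}$) and they are right. The trade-off: the paper's area-integral route is shorter because the cotangent's closed form packages the whole lattice sum at once; your route is more modular, mirrors the splitting actually used later in Proposition \ref{Prop-regEuler} and in the desingularization of vortex pairs, and makes explicit that there is no double counting between the $k=0$ pair interaction and the periodic remainder — both the tangency of the singular self term and the $\int_{\T}\phi'=0$ cancellation are exactly the mechanisms that make $F_E$ well defined at $\varepsilon=0$.
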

\begin{proof}
The equation that we must check is
\begin{align*}
\lim_{\varepsilon\rightarrow 0}\, \textnormal{Re}\Big[\Big\{&\frac{i}{2\pi^2\varepsilon}\overline{\int_{\T}\ln\left|\sin\left(\frac{\pi\varepsilon i(w-\xi)}{l}\right)\right|\, d\xi}\\
&+\frac{i}{2\pi^2\varepsilon}\overline{\int_{\T}\ln\left|\sin\left(\frac{\pi(\varepsilon i(w+\xi)-a+ih)}{l}\right)\right|\, d\xi}-\overline{V_0}\Big\}{w}i\Big]=0.
\end{align*}
Using the Stokes Theorem, it agrees with
\begin{align*}
\lim_{\varepsilon\rightarrow 0}\, \textnormal{Re}\Big[\Big\{&-\frac{i}{2l\pi }{\int_{ \D}\cot\left[\frac{\pi\varepsilon i(w-y)}{l}\right]\, dA(y)}\\
&+\frac{i}{2l\pi }{\int_{ \D}\cot\left[\frac{\pi(\varepsilon i(w+y)-a+ih)}{l}\right]\, dA(y)}-\overline{V_0}\Big\}{w}i\Big]=0.
\end{align*}
We study the equation in two parts. First, note that
\begin{align*}
\lim_{\varepsilon\rightarrow 0}\, &\textnormal{Re}\left[\left\{\frac{i}{2l \pi}{\int_{ \D}\cot\left[\frac{\pi(\varepsilon i(w+y)-a+ih)}{l}\right]\, dA(y)}-\overline{V_0}\right\}{w}i\right]\\
&=\textnormal{Re}\left[\left\{-\frac{1}{2l\pi i}{\cot\left[\frac{\pi(ih-a)}{l}\right]}|\D|-\overline{V_0}\right\}{w}i\right]\\
&=0, \quad w\in\T.
\end{align*}
In the above limit, we may use the Dominated Convergence Theorem in order to introduce the limit inside the integral. Second, we use the expansion of the complex cotangent as
$$
\cot (z)=\frac{1}{z}+ z T(z),\quad T(z)=\sum_{k=1}^{\infty}\frac{2}{z^2-\pi^2k^2},
$$
where $T$ is a smooth function for $|z|<1$. Then, the only contribution in $F_E$ is given by the first part:
\begin{align*}
\lim_{\varepsilon\rightarrow 0}\, \textnormal{Re}\left[\frac{i}{2l\pi }{\int_{ \D}\cot\left[\frac{\pi\varepsilon i(w-y)}{l}\right]\, dA(y)}{w}i\right]&=\lim_{\varepsilon\rightarrow 0}\,\frac{1}{\varepsilon}\textnormal{Re}\left[\frac{i}{2\pi^2 }{\int_{ \D}\frac{1}{w-y} \, dA(y)}{w}\right]\\
&=\lim_{\varepsilon\rightarrow 0}\,\frac{1}{2\pi\varepsilon}\textnormal{Re}\left[i\right]\\
&=0,
\end{align*}
for $w\in\T$, where we have used the Residue Theorem to compute the integral.
\end{proof}

We fix the Banach spaces that we will use when we apply the Implicit Function Theorem. For $\alpha\in(0,1)$, we define
\begin{align}
X_{\alpha}&=\left\{f\in C^{1,\alpha}(\T), \quad f(w)=\sum_{n\geq 1}a_nw^{-n},\,  a_n\in\R\right\},\label{X}\\
Y_{\alpha}&=\left\{f\in C^{0,\alpha}(\T), \quad f(e^{i\theta})=\sum_{n\geq 2}a_n\sin(n\theta)\label{Y},\,  a_n\in\R\right\}.
\end{align}

\begin{rem}
{{ Let us explain why we need that the first frequency in the domain $Y_{\alpha}$ is vanishing. In the case that $a=0$ or $a=\frac{l}{2}$, it can be checked that $F_E(\varepsilon,f,V)$ is well--defined and $C^1$ from $\R\times X_{\alpha}\times \R$ to
$$
\tilde{Y}_{\alpha}=\left\{f\in C^{0,\alpha}(\T), \quad f(e^{i\theta})=\sum_{n\geq 1}a_n\sin(n\theta),\,  a_n\in\R\right\}.
$$
But, when we linerarize ${F}_E$ and obtain $\partial_f {F}_E(0, 0, V)$, this is not an isomorphism from $X_{\alpha}$ to $\tilde{Y}_{\alpha}$. However, it does from $X_{\alpha}$ to $Y_{\alpha}$. We are using $Y_{\alpha}$ instead of $\tilde{Y}_{\alpha}$ in order to implement later the Implicit Function Theorem.
}}
\end{rem}
\begin{rem}
Note that if $f\in B_{X_{\alpha}}(0,\sigma)$, with $\sigma<1$, then $\phi$ is bilipschitz.
\end{rem}

\begin{pro}\label{Prop-trivialsol2}
The function $V:(-\varepsilon_0,\varepsilon_0)\times B_{X_{\alpha}}(0,\sigma)\longrightarrow \R$, given by 
\begin{align}\label{function-V}
V(\varepsilon,f)=&\frac{\int_{\T}\overline{ I_E(\varepsilon,f)(w)}{w}{\phi'(w)}(1-\overline{w}^2)dw}{\int_{\T} {w}{\phi'(w)}(1-\overline{w}^2)dw},
\end{align}
fulfills $V(0, f)=V_0$, where $V_0$ is defined in \eqref{velocity_PV}. The parameters satisfy: $\varepsilon_0\in(0,\textnormal{min}\{1,\frac{l}{4}\})$, $\sigma<1$, $\alpha\in(0,1)$, and $X$ is defined in \eqref{X}.
\end{pro}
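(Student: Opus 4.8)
The plan is to evaluate the quotient \eqref{function-V} directly at $\varepsilon=0$, exploiting that the conformal map \eqref{phi-euler} degenerates to $\phi(w)=iw$ independently of $f$, so $\phi'(w)=i$ and $w\phi'(w)=iw$. First I would dispose of the denominator: on $\T$ one has $\overline w=w^{-1}$, hence $w\phi'(w)(1-\overline w^2)=i(w-\overline w)$, and the residue theorem gives $\int_\T i(w-\overline w)\,dw=2\pi\neq 0$. It therefore suffices to prove that the numerator converges to $2\pi V_0$ as $\varepsilon\to 0$, with $V_0$ from \eqref{velocity_PV}, whence $V(0,f)=V_0$.

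For the numerator I would insert the splitting \eqref{exp-I}, $I_E=\hat I_E+\tilde I_E$, and treat each piece as $\varepsilon\to0$. The delicate one is the self-interaction Cauchy term $\frac{1}{4\pi^2\varepsilon}\overline{\int_\T\frac{\overline{\phi(w)}-\overline{\phi(\xi)}}{\phi(w)-\phi(\xi)}\phi'(\xi)\,d\xi}$. Writing $\phi=iF$ with $F(w)=w+\varepsilon f(w)$ and expanding in $\varepsilon$, its singular $O(1/\varepsilon)$ part has the explicit profile proportional to $\overline w$, and the pairing $\int_\T \overline w\,iw(1-\overline w^2)\,dw$ vanishes since $\int_\T(1-\overline w^2)\,dw=0$; the $\varepsilon f'$ correction of $\phi'$ paired against this singular part produces $\int_\T f'(w)(1-\overline w^2)\,dw$, which again vanishes because $f\in X_\alpha$ carries only frequencies $w^{-n}$, $n\geq 1$. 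The genuinely new point, and what I expect to be the main obstacle, is that the \emph{finite} $O(\varepsilon^0)$ part of this self-interaction vanishes \emph{identically} in $f$: its $\varepsilon$-coefficient is $\int_\T\big[\tfrac{g(w)-g(\xi)}{w-\xi}+\tfrac{f(w)-f(\xi)}{w\xi(w-\xi)}-\tfrac{f'(\xi)}{w\xi}\big]\,d\xi$, where $g(\xi)=\overline{f(\xi)}=\sum a_n\xi^{n}$ on $\T$, and each of the three $\xi$-integrals is zero by the residue theorem precisely because $f$ (resp. $g$) involves only negative (resp. positive) powers. Verifying this cancellation is exactly what guarantees that $V(0,f)$ is independent of $f$. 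The $H$-self-term in $\tilde I_E$ is harmless, since $\ln|H(z)|=O(|z|^2)$ near the origin, so after dividing by $\varepsilon$ it is $O(\varepsilon)$.

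Finally, the two interaction terms — the Cauchy kernel with denominator $\varepsilon(\phi(w)-\phi(\xi))-a+ih$ and the matching $H$-term — converge as $\varepsilon\to0$ to $f$-independent limits, where the $H$-term's apparent $1/\varepsilon$ is absorbed after expanding $\ln|H|$ and using $\int_\T\phi'\,d\xi=0$. Together they reassemble the interaction velocity of the point-vortex street already identified in Proposition \ref{Prop-trivialsol}; carrying out the weighted integral and invoking the same cotangent identity used there yields $2\pi V_0$ (the matching of the first $\sin\theta$ mode is literally the content of $F_E(0,0,V_0)=0$, and reality of the remaining contribution follows from the reflection symmetry $\overline{\phi(\overline w)}=-\phi(w)$ forced by $a_n\in\R$ and $a\in\{0,\tfrac l2\}$). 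Dividing numerator by denominator gives $V(0,f)=2\pi V_0/2\pi=V_0$, as claimed.
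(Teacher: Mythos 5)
Your proof is correct, and its overall strategy coincides with the paper's: show the denominator tends to $2\pi$ by residues, show that the $O(1/\varepsilon)$ singular part of $\overline{I_E}$ pairs to zero against $w\phi'(w)(1-\overline{w}^2)$, and identify the limit of the rest of the numerator with $2\pi V_0$. The difference is that you carry out, and the paper does not, the one step where something could actually go wrong. The paper's proof passes to the limit in \eqref{lim-Ie} by replacing $\phi(w)-\phi(\xi)$ with $i(w-\xi)$ and $\phi'(\xi)$ with $i$; but the self--interaction term carries a $1/\varepsilon$ prefactor, so the $O(\varepsilon)$ corrections hidden in $\phi=i(w+\varepsilon f)$ generate $f$--dependent $O(1)$ contributions that \eqref{lim-Ie} silently discards. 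Your expansion of the Cauchy self--interaction kernel isolates exactly these contributions, namely
\[
\int_{\T}\Big[\frac{g(w)-g(\xi)}{w-\xi}+\frac{f(w)-f(\xi)}{w\xi(w-\xi)}-\frac{f'(\xi)}{w\xi}\Big]\,d\xi,
\qquad g(\xi)=\overline{f(\xi)}=\sum_{n\geq1}a_n\xi^{n} \ \text{on } \T,
\]
and your observation that each of the three integrals vanishes (the first because $g$ extends holomorphically inside $\D$, the other two because $f$ extends holomorphically outside $\D$ with $O(|\xi|^{-1})$ decay, so the contour can be pushed to infinity) is precisely what makes $V(0,f)$ independent of $f$; the same goes for your check that the singular profile proportional to $\overline{w}/\varepsilon$, paired against the $\varepsilon f'(w)$ part of $\phi'(w)$, produces $\int_{\T}f'(w)(1-\overline{w}^2)\,dw=0$. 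It is worth noting that this pointwise cancellation is special to the Euler/logarithmic scaling: in the general-kernel setting of Proposition \ref{Prop-trivialsol2Qgen} the analogous finite part $I_1(0,f)$ does \emph{not} vanish identically, and the paper there must verify instead that its weighted integral is zero — which confirms that the verification you supply cannot simply be skipped. In short, your proposal is a completed version of the paper's argument: same route, with the cancellations the paper leaves implicit actually proved.
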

\begin{proof}
In the expression \eqref{function-V}, let us work with the denominator. The Residue Theorem amounts to
$$
\lim_{\varepsilon\rightarrow 0}\int_{\T}w\phi'(w)(1-\overline{w}^2)dw=i\int_{\T} w(1-\overline{w}^2)dw=2\pi.
$$ 
From  \eqref{Iepsilon} and the ideas in Proposition \ref{Prop-trivialsol}, we get
\begin{align}\label{lim-Ie}
\lim_{\varepsilon\rightarrow 0} I_E(\varepsilon,f)(w)=&\lim_{\varepsilon\rightarrow 0}\left\{-\frac{i}{2\pi^2\varepsilon}{\int_{\T}\ln\left|\sin\left(\frac{\pi(\varepsilon i(w-\xi))}{l}\right)\right|\, d\xi}+V_0\right\}\nonumber\\
=&\lim_{\varepsilon\rightarrow 0}\left\{\frac{1}{2\pi^2\varepsilon }\overline{\int_{ \D}\frac{1}{w-y} \, dA(y)}+V_0\right\}\nonumber\\
=&\lim_{\varepsilon\rightarrow 0}\left\{\frac{w}{2\pi\varepsilon}+V_0\right\}.
\end{align}
Note also that
$$
\int_{\T}\overline{w}w(1-\overline{w}^2)dw=\int_{\T}(1-\overline{w}^2)dw=0,
$$
via again the Residue Theorem. Then, the first term in \eqref{lim-Ie} does not provide any contribution. It implies that
$$
V(0,f)=V_0\frac{\int_{\T} {w}(1-\overline{w}^2)dw}{\int_{\T} {w}(1-\overline{w}^2)dw}=V_0.
$$
\end{proof}

\begin{pro}\label{Prop-regEuler}
If $V$ sets \eqref{function-V}, then $$\tilde{F}_E:(-\varepsilon_0,\varepsilon_0)\times B_{X_{\alpha}}(0,\sigma)\rightarrow Y_{\alpha},$$ with $\tilde{F}_E(\varepsilon,f)=F_E(\varepsilon,f,V(\varepsilon,f))$, is well--defined and $C^1$. The parameters satisfy that $\alpha\in(0,1)$, $\varepsilon_0\in(0,\textnormal{min}\{1,\frac{l}{4}\})$ and $\sigma<1$.
\end{pro}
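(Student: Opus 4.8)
The plan is to prove the two assertions—$C^1$ regularity and membership in $Y_\alpha$—separately, isolating the only genuinely singular contribution, which is already treated in \cite{HmidiMateu-pairs}. For the regularity I would start from the splitting \eqref{exp-I}, $I_E=\hat I_E+\tilde I_E$. The term $\hat I_E$ is exactly the functional governing a pair of patches studied in \cite{HmidiMateu-pairs}, so from there one borrows that it generates a $C^1$ map and, crucially, that the factor $\frac1\varepsilon$ carried by its self-induction piece $\frac{1}{4\pi^2\varepsilon}\overline{\int_{\T}\frac{\overline{\phi(w)}-\overline{\phi(\xi)}}{\phi(w)-\phi(\xi)}\phi'(\xi)\,d\xi}$ becomes harmless once inserted into $F_E$. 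Indeed its leading part behaves like $\frac{w}{2\pi\varepsilon}$ (compare \eqref{lim-Ie}), and since $\textnormal{Re}\big[\overline{(w/2\pi\varepsilon)}\,w\phi'(w)\big]=\textnormal{Re}\big[\tfrac{1}{2\pi\varepsilon}\phi'(w)\big]$ while the offending $\frac{i}{2\pi\varepsilon}$ has vanishing real part, the $\hat I_E$ contribution to $\tilde F_E$ is $C^1$ into $C^{0,\alpha}(\T)$.

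The new ingredient is the periodic correction $\tilde I_E$, and here the point is that it is in fact smoother than $\hat I_E$. Its kernels are $\ln|H(\cdot)|$ with $H(z)=\sin(z)/z$ analytic and non-vanishing on $\{|z|<\pi\}$; the choice $\varepsilon_0\in(0,\min\{1,\frac l4\})$ keeps the argument $\frac{\pi\varepsilon(\phi(w)-\phi(\xi))}{l}$ inside this disc, while the argument $\frac{\pi(\varepsilon(\phi(w)+\phi(\xi))-a+ih)}{l}$ of the second kernel stays, for $\varepsilon_0$ small, in a compact set avoiding the zeros $\{k\pi\}_{k\neq0}$ of $\sin$, precisely because $h\neq0$. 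Hence both kernels are smooth in all variables, and the prefactor $\frac1\varepsilon$ is absorbed: in the first integral $\ln|H(\tfrac{\pi\varepsilon(\phi(w)-\phi(\xi))}{l})|=O(\varepsilon^2)$, so multiplying by $\frac1\varepsilon$ leaves an $O(\varepsilon)$ term; in the second, using $\int_{\T}\phi'(\xi)\,d\xi=0$ one subtracts the $\xi$-independent value of the kernel before integrating, leaving an integrand of size $O(\varepsilon)$. Consequently $\tilde I_E$ extends to a bounded $C^1$ (indeed $C^\infty$) map carrying no singular part.

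It remains to handle $V$ and to assemble. By \eqref{function-V}, $V(\varepsilon,f)$ is a quotient whose denominator tends to $2\pi$ as in Proposition \ref{Prop-trivialsol2}, hence is bounded away from zero once $\varepsilon_0$ and $\sigma$ are small, while its numerator is $C^1$ since $I_E$ and $\phi$ are; thus $V$ is $C^1$. Because $C^{0,\alpha}(\T)$ is a Banach algebra and $\phi'\in C^{0,\alpha}(\T)$ (as $f\in X_\alpha\subset C^{1,\alpha}(\T)$), the product and real part in $\tilde F_E(\varepsilon,f)=\textnormal{Re}\big[(\overline{I_E(\varepsilon,f)}-\overline{V(\varepsilon,f)})\,w\phi'(w)\big]$ preserve $C^1$ regularity, which yields that $\tilde F_E$ is $C^1$ into $C^{0,\alpha}(\T)$.

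For the range, $\tilde F_E$ is real by construction and lies in $\tilde Y_\alpha$ by the reflection symmetry $\phi(\overline w)=-\overline{\phi(w)}$ forced by $a_n\in\R$ together with $a\in\{0,\frac l2\}$, exactly as recorded in the preceding remark. To upgrade to $Y_\alpha$ I would show the first sine mode vanishes: writing $b_1=\frac1\pi\int_0^{2\pi}\tilde F_E(e^{i\theta})\sin\theta\,d\theta$ and using $\sin\theta\,d\theta=-\tfrac12(1-\overline w^2)\,dw$ on $\T$, one finds $b_1$ proportional to $\textnormal{Re}\int_{\T}(\overline{I_E}-\overline V)\,w\phi'(w)(1-\overline w^2)\,dw=\textnormal{Re}[(V-\overline V)D]$, where $D=\int_{\T}w\phi'(w)(1-\overline w^2)\,dw$; the precise definition \eqref{function-V} is exactly what produces the combination $(V-\overline V)D$, which vanishes since $V$ is real-valued. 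Hence $b_1=0$ and $\tilde F_E\in Y_\alpha$. The main obstacle is the singular self-induction estimate for $\hat I_E$ into a Hölder space with $\frac1\varepsilon$ under control; this is the technical core already carried out in \cite{HmidiMateu-pairs}, so the genuinely new work reduces to verifying that the periodic correction $\tilde I_E$ adds nothing singular, which is where the identity $H=\sin(\cdot)/(\cdot)$ and the cancellation $\int_{\T}\phi'=0$ are used.
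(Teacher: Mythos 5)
Your proof is correct and follows essentially the same route as the paper's: the same splitting \eqref{exp-I} with the singular pair part $\hat I_E$ delegated to \cite{HmidiMateu-pairs}, the same smoothness argument for $\tilde I_E$ via $H(z)=\sin(z)/z$ (your subtraction using $\int_{\T}\phi'(\xi)\,d\xi=0$ is exactly the paper's observation that the $\xi$-independent part of $\ln|H(\varepsilon z+z')|$ integrates to zero), and the same mechanism for landing in $Y_\alpha$ (reflection symmetry plus the definition \eqref{function-V} killing the first sine mode). The only step you assert rather than verify is that $V(\varepsilon,f)\in\R$, which the paper checks explicitly by the same conjugation symmetry $I_E(\varepsilon,f)(\overline w)=\overline{I_E(\varepsilon,f)(w)}$ that you already invoke.
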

\begin{rem}\label{rem-taylor}
Let us clarify why we need the condition $\varepsilon_0<\frac{l}{4}$. In some point of the proof we need to use Taylor formula in the following way
\begin{align}\label{Taylor-formula}
G(|z_1+z_2|)=G(|z_1|)+\int_0^1G'(|z_1+tz_2|)\frac{\textnormal{Re}\left[(z_1+tz_2)\overline{z_2}\right]}{|z_1+tz_2|}dt,
\end{align}
for $z_1,z_2\in\C$ and { $|z_2|<|z_1|$}. Here, the use of this formula is not explicit since we are refering to the work \cite{HmidiMateu-pairs}, and this condition is needed in order to check $|z_2|<|z_1|$. Although we are not using it explicitly in this proof, we will use it for the general equation in the following section.
\end{rem}
\begin{proof}
We will divide the proof in three steps.

\medskip
\noindent
{\it $\bullet$ First step: Symmetry of $F_E$.} 
Note that $\phi$ given by \eqref{phi-euler} verifies 
$$
\phi(\overline{w})=-\overline{\phi(w)},
$$
where we are taking $\vartheta=i$ in order to work with $a=0$ or $a=\frac{l}{2}$. We are going to check that $F_E(\varepsilon,f,V)(e^{i\theta})=\sum_{n\geq 1}f_n\sin(n\theta)$, with $f_n\in\R$. To do that, it is enough to prove that
$$
F_E(\varepsilon,f,V)(\overline{w})=-F_E(\varepsilon,f,V)(w).
$$
Recall the following property of the complex integrals over $\T$:
\begin{equation}\label{property-integral}
\overline{\int_{\T}f(w)dw}=-\int_{\T}\overline{f(\overline{w})}dw,
\end{equation}
for a complex function $f$.

Let us start with the expression of $I_E(\varepsilon,f)$ and note that
\begin{align*}
\ln\left|\sin\left(\frac{\pi(\varepsilon(\phi(w)+\phi(\xi))-a+ih)}{l}\right)\right|=\ln\left|\sin\left(\frac{\pi(\varepsilon(\phi(w)+\phi(\xi))+ih)}{l}\right)\right|, \quad a=0,\\
\ln\left|\sin\left(\frac{\pi(\varepsilon(\phi(w)+\phi(\xi))-a+ih)}{l}\right)\right|=\ln\left|\cos\left(\frac{\pi(\varepsilon(\phi(w)+\phi(\xi))+ih)}{l}\right)\right|, \quad a=\frac{l}{2}.
\end{align*}
Then,
$$
\ln\left|\sin\left(\frac{\pi(\varepsilon(\phi(\overline{w})+\phi(\overline{\xi}))-a+ih)}{l}\right)\right|=\ln\left|\sin\left(\frac{\pi(\varepsilon(\phi(w)+\phi(\xi))-a+ih)}{l}\right)\right|,
$$
for $a=0$ and $a=\frac{l}{2}$. Notice that $I_E(\varepsilon,f)(\overline{w})=\overline{I_E(\varepsilon,f)(w)}$, which implies
\begin{align*}
-2\pi^2\varepsilon\overline{I_E(\varepsilon,f)(w)}=&\overline{{\int_{\T}\ln\left|\sin\left(\frac{\pi(\varepsilon(\phi(w)-\phi(\xi)))}{l}\right)\right|\phi'(\xi)\, d\xi}}\\
&+\overline{\int_{\T}\ln\left|\sin\left(\frac{\pi(\varepsilon(\phi(w)+\phi(\xi))-a+ih)}{l}\right)\right|\phi'(\xi)\, d\xi}\\
=&-{{\int_{\T}\ln\left|\sin\left(\frac{\pi(\varepsilon(\phi(w)-\phi(\overline{\xi})))}{l}\right)\right|\overline{\phi'(\overline{\xi})}\, d\xi}}\\
&-{\int_{\T}\ln\left|\sin\left(\frac{\pi(\varepsilon(\phi(w)+\phi(\overline{\xi}))-a+ih)}{l}\right)\right|\overline{\phi'(\overline{\xi})}\, d\xi}\\
=&{{\int_{\T}\ln\left|\sin\left(\frac{\pi(\varepsilon(\phi(\overline{w})-\phi(\xi)))}{l}\right)\right|\phi'(\xi)\, d\xi}}\\
&+{\int_{\T}\ln\left|\sin\left(\frac{\pi(\varepsilon(\phi(\overline{w})+\phi(\xi))-a+ih)}{l}\right)\right|\phi'(\xi)\, d\xi}\\
=&-2\pi^2\varepsilon{I_E(\varepsilon,f)(\overline{w})}.
\end{align*}

Next, if $V$ is given by \eqref{function-V}, then we are going to check that $V\in\R$. Let us analyze the denominator and the numerator of the expression of $V$:
\begin{align*}
2i\textnormal{Im}\Big[\int_{\T}\overline{ I_E(\varepsilon,f)(w)}&{w}{\phi'(w)}(1-\overline{w}^2)dw\Big]\\
=&\int_{\T}\overline{ I_E(\varepsilon,f)(w)}{w}{\phi'(w)}(1-\overline{w}^2)dw-\overline{\int_{\T}\overline{ I_E(\varepsilon,f)(w)}{w}{\phi'(w)}(1-\overline{w}^2)dw}\\
=&\int_{\T}\overline{ I_E(\varepsilon,f)(w)}{w}{\phi'(w)}(1-\overline{w}^2)dw+\int_{\T}\overline{ I_E(\varepsilon,f)(w)}{w}\overline{\phi'(\overline{w})}(1-\overline{w}^2)dw\\
=&\int_{\T}\overline{ I_E(\varepsilon,f)(w)}{w}{\phi'(w)}(1-\overline{w}^2)dw-\int_{\T}\overline{ I_E(\varepsilon,f)(w)}{w}{\phi'({w})}(1-\overline{w}^2)dw\\
=&0,
\end{align*}
and
\begin{align*}
2i\textnormal{Im}\left[\int_{\T} {w}{\phi'(w)}(1-\overline{w}^2)dw\right]=&\int_{\T} {w}{\phi'(w)}(1-\overline{w}^2)dw-\overline{\int_{\T} {w}{\phi'(w)}(1-\overline{w}^2)dw}\\
=&\int_{\T} {w}{\phi'(w)}(1-\overline{w}^2)dw+\int_{\T} {w}\overline{\phi'(\overline{w})}(1-\overline{w}^2)dw\\
=&\int_{\T} {w}{\phi'(w)}(1-\overline{w}^2)dw-\int_{\T} {w}{\phi'(w)}(1-\overline{w}^2)dw\\
=&0.
\end{align*}
Then, $V\in\R$. Hence,
\begin{align*}
F_E(\varepsilon,f,V)(\overline{w})=&\textnormal{Re}\left[\left\{\overline{I_E(\varepsilon,f)(\overline{w})-V}\right\}\overline{w}{\phi'(\overline{w})}\right]\\
=&-\textnormal{Re}\left[\left\{{I_E(\varepsilon,f)(w)-V}\right\}{\overline{w}\overline{\phi'(w)}}\right]\\
=&-F_E(\varepsilon,f,V)({w}).
\end{align*}
In order to check that $\tilde{F}_E(\varepsilon,f)\in Y_{\alpha}$, we need $f_1=0$. For that, we ask the condition
$$
\int_0^{2\pi}F_E(\varepsilon,f,V)(e^{i\theta})\sin(\theta)d\theta=-\frac12\int_{\T} F_E(\varepsilon,f,V)(w)(1-\overline{w}^2)dw=0,
$$
which agrees with
$$
\int_{\T} \left\{\overline{I_E(\varepsilon,f)(w)}-V\right\}{w}{\phi'(w)}(1-\overline{w}^2)dw=0.
$$
Using that $V$ verifies \eqref{function-V}, the last equation is clearly set.

\medskip
\noindent
{\it $\bullet$  Second step: Regularity of $V$. }
Let us begin with the denominator, noting that
$$
\int_{\T}w\phi'(w)(1-\overline{w}^2)dw=i\int_{\T}w(1+\varepsilon f'(w))(1-\overline{w}^2)dw=2\pi+i\varepsilon\int_{\T}wf'(w)dw=2\pi-i\varepsilon\int_{\T}f(w)dw,
$$
by using the Residue Theorem. Then, if $|\varepsilon|<\varepsilon_0$ and $f\in B_{X_\alpha}(0,\sigma)$, the denominator is not vanishing. Moreover, the denominator is clearly $C^1$ in $\varepsilon$ and $f$.

We continue with the numerator denoting
\begin{align*}
J(\varepsilon,f)=&\int_{\T}\overline{ I_E(\varepsilon,f)(w)}{w}{\phi'(w)}(1-\overline{w}^2)dw\\
=&\int_{\T}\overline{ \hat{I}_E(\varepsilon,f)(w)}{w}{\phi'(w)}(1-\overline{w}^2)dw+\int_{\T}\overline{ \tilde{I}_E(\varepsilon,f)(w)}{w}{\phi'(w)}(1-\overline{w}^2)dw\\
=:&J_1(\varepsilon,f)(w)+J_2(\varepsilon,f)(w),
\end{align*}
using the decomposition of $I_E$ done in \eqref{exp-I}. Note that $\hat{I}_E$ is the part of $I_E$ coming from the vortex pairs analyzed in \cite{HmidiMateu-pairs}. In that work $J_1$ is analyzed showing that it is $C^1$ in $\varepsilon$ and $f$. Note that the spaces used in \cite{HmidiMateu-pairs} are also \eqref{X}--\eqref{Y} and the condition $\varepsilon_0<\frac{l}{4}$ is needed in their computations, see Remark \ref{rem-taylor}.

Then, it remains to study the regularity of $J_2(\varepsilon,f)$. {We should analyze $\tilde{I}_{E}$, i.e.,
\begin{align*}
\tilde{I}_{E}(\varepsilon,f)(w)=&-\frac{1}{2\pi^2 \varepsilon}{\int_{\T}\ln\left|H\left(\frac{\pi\varepsilon(\phi(w)-\phi(\xi))}{l}\right)\right| \phi'(\xi)\, d\xi}\\
&-\frac{1}{2\pi^2 \varepsilon}{\int_{\T}\ln\left|H\left(\frac{\pi(\varepsilon(\phi(w)+\phi(\xi))-a+ih)}{l}\right)\right| \phi'(\xi)\, d\xi}\\
=&-\frac{i}{2\pi^2 \varepsilon}{\int_{\T}\ln\left|H\left(\frac{\pi\varepsilon(\phi(w)-\phi(\xi))}{l}\right)\right| \, d\xi}\\
&-\frac{i}{2\pi^2}{\int_{\T}\ln\left|H\left(\frac{\pi\varepsilon(\phi(w)-\phi(\xi))}{l}\right)\right| f'(\xi)\, d\xi}\\
&-\frac{i}{2\pi^2 \varepsilon}{\int_{\T}\ln\left|H\left(\frac{\pi(\varepsilon(\phi(w)+\phi(\xi))-a+ih)}{l}\right)\right| \, d\xi}\\
&-\frac{i}{2\pi^2 }{\int_{\T}\ln\left|H\left(\frac{\pi(\varepsilon(\phi(w)+\phi(\xi))-a+ih)}{l}\right)\right|f'(\xi)\, d\xi}\\
=:&-(I_1(\varepsilon,f)+I_2(\varepsilon,f)+I_3(\varepsilon,f)+I_4(\varepsilon,f)).
\end{align*}
Note that $I_2$ and $I_4$ are smooth in both variables, due to that $H(z)=\frac{\sin(z)}{z}$, see \eqref{H}. Then they are $C^1$ in $\varepsilon$ and $\phi$. Let us analyze the others terms. Using \eqref{H} and the expansion of the logarithm,
$$
\ln|1+f(z)|=\textnormal{Re} \sum_1^{\infty} \frac{(-1)^{1+n}f(z)^n}{n},
$$
one has
$$
\ln|H(\varepsilon z)|=\varepsilon^2 G_1(\varepsilon,z),
$$
with $G_1$ smooth in both variables. This implies that $I_1$ is $C^1$ in $\varepsilon$ and $f$. On the other way,
$$
\ln|H(\varepsilon z+z')|=\varepsilon G_2(\varepsilon,z,z')+G_3(z'),
$$
with $G_2$ and $G_3$ smooth. Then, we find
$$
I_3(\varepsilon,f)(w)=\frac{i}{2\pi^2}\int_{\T} G_2\left(\varepsilon, \frac{\pi(\phi(w)+\phi(\xi))}{l},\frac{-a+ih}{l}\right)\, d\xi,
$$
which is smooth in $f$ and $\varepsilon$.
}
Hence, we achieve that $V$ is $C^1$ in both variables.

\medskip
\noindent
{\it $\bullet$  Third step: Regularity of $\tilde{F}_E$. } Decomposing $I$ again as in \eqref{exp-I}, we get that
$$
\tilde{F}(\varepsilon,f)=\textnormal{Re}\left[\left\{\overline{\hat{I}_E(\varepsilon,f)(w)}-\overline{\tilde{I}_E(\varepsilon,f)(w)}-{V}(\varepsilon,f)\right\}{w}{\phi'(w)}\right].
$$
Again, the part coming from $\hat{I}$ is analyzed in \cite{HmidiMateu-pairs}, where it is shown that is $C^1$ in both variables. From the second step, we got that $\tilde{I}$ and $V$ are also smooth completing the proof.
\end{proof}

\subsection{Desingularization of the K\'arm\'an Point Vortex Street}
In this section, we provide the proof of the existence of K\'arm\'an Vortex Patch Street via a desingularization of the point vortex model given by the K\'arm\'an Point Vortex Street. The idea is to implement the Implicit Function Theorem to the functional $\tilde{F}_E$ defined in Proposition \ref{Prop-regEuler}.

\begin{theo}\label{Th-euler}
Let $h, l\in\R$, with $h\neq 0$ and $l>0$, and $a=0$ or $a=\frac{l}{2}$. Then, there exist $D^{\varepsilon}$ such that
\begin{equation}\label{omega_epsilon2}
\omega_0(x)=\frac{1}{\pi\varepsilon^2}\sum_{k\in\Z}{\bf 1}_{\varepsilon D^{\varepsilon}+kl}(x)-\frac{1}{\pi\varepsilon^2}\sum_{k\in\Z}{\bf 1}_{-\varepsilon D^{\varepsilon}+a-ih+kl}(x),
\end{equation}
defines a horizontal translating solution of the Euler equations, with constant speed, for any $\varepsilon\in(0,\varepsilon_0)$ and small enough $\varepsilon_0>0$. Moreover, $D^{\varepsilon}$ is at least $C^1$.
\end{theo}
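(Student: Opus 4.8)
The strategy is to realize the configuration \eqref{omega_epsilon2} as a zero of the reduced functional $\tilde{F}_E(\varepsilon, f) = F_E(\varepsilon, f, V(\varepsilon, f))$ from Proposition \ref{Prop-regEuler}, and to produce the branch of solutions $\varepsilon \mapsto f^\varepsilon$ through the Implicit Function Theorem. Two of the three hypotheses are already in place: the map $\tilde{F}_E : (-\varepsilon_0, \varepsilon_0) \times B_{X_\alpha}(0,\sigma) \to Y_\alpha$ is $C^1$ by Proposition \ref{Prop-regEuler}, and $\tilde{F}_E(0,0) = F_E(0,0,V_0) = 0$ by Propositions \ref{Prop-trivialsol} and \ref{Prop-trivialsol2}. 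Hence the entire proof reduces to showing that $\partial_f \tilde{F}_E(0,0)$ is an isomorphism from $X_\alpha$ onto $Y_\alpha$; the IFT then delivers a $C^1$ map $\varepsilon \mapsto f^\varepsilon \in X_\alpha$, defined on a possibly smaller interval $(0, \varepsilon_0)$, with $f^0 = 0$ and $\tilde{F}_E(\varepsilon, f^\varepsilon) = 0$.

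The heart of the matter is therefore the computation of $L := \partial_f \tilde{F}_E(0,0)$. The key observation is a scaling balance: when linearizing $I_E$ in $f$ at $\varepsilon = 0$, the self-interaction term, the interaction between the two arrays in \eqref{exp-I}, and the regular corrections $\tilde{I}_E$ all contribute nothing at order one (the interaction and $H$-terms are $O(\varepsilon)$, while the self-interaction integral carries no residue at the relevant Fourier frequency). The only surviving $O(1)$ contribution comes from pairing the singular velocity $\tfrac{w}{2\pi\varepsilon}$ isolated in \eqref{lim-Ie} with the $f$-perturbation of the conformal factor $w\phi'(w) = iw(1 + \varepsilon f'(w))$. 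Since $w\overline{w} = 1$ on $\T$, one is led to
$$
\partial_f \tilde{F}_E(0,0)[g](w) = \textnormal{Re}\left[\frac{i\,g'(w)}{2\pi}\right] + (\textnormal{a multiple of } \sin\theta),
$$
where the $\sin\theta$ term (the first Fourier mode) stems from the variation of $V(\varepsilon, f)$ and is annihilated upon projecting into $Y_\alpha$. Evaluating on $g(w) = w^{-n}$ gives $\textnormal{Re}[\tfrac{i g'(w)}{2\pi}] = -\tfrac{n}{2\pi}\sin((n+1)\theta)$, so that
$$
L\Big(\sum_{n\geq 1} a_n w^{-n}\Big) = -\frac{1}{2\pi}\sum_{m\geq 2}(m-1)\,a_{m-1}\sin(m\theta).
$$
This operator shifts the Fourier index by one, sending the modes $n\geq 1$ of $X_\alpha$ bijectively onto the modes $m\geq 2$ of $Y_\alpha$ with eigenvalues growing like $m$; the linear growth matches exactly the one-derivative gap between $C^{1,\alpha}(\T)$ and $C^{0,\alpha}(\T)$, so $L$ is a Banach space isomorphism. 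It is precisely this index shift that explains why $Y_\alpha$ in \eqref{Y} must start at the second frequency, and why $V$ had to be fixed through \eqref{function-V} to absorb the resonant first mode.

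With the isomorphism in hand, the Implicit Function Theorem produces the branch $f^\varepsilon$, and I would finish by unwinding the construction. The domain $D^\varepsilon$ is the one enclosed by the Jordan curve $\phi_{f^\varepsilon}(\T)$, which is $C^{1,\alpha}$, hence at least $C^1$, since $f^\varepsilon \in X_\alpha$ and, being close to the identity, $\phi_{f^\varepsilon}$ is bilipschitz. The identity $\tilde{F}_E(\varepsilon, f^\varepsilon) = 0$ is, by \eqref{F-euler} and \eqref{eq1}, exactly the statement that $v_{0,\varepsilon} - V(\varepsilon, f^\varepsilon)$ is tangent to $\partial(\varepsilon D^\varepsilon + kl)$ and to $\partial(-\varepsilon D^\varepsilon + a - ih + kl)$, so \eqref{omega_epsilon2} is a travelling solution with speed $V(\varepsilon, f^\varepsilon)$; since this speed is real (first step of Proposition \ref{Prop-regEuler}), the motion is horizontal, and the reflection symmetry $\phi(\overline{w}) = -\overline{\phi(w)}$ guarantees that the relation $D_2 = -D_1 + a - ih$ is preserved along the branch. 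The main obstacle is clearly the linearization step: correctly recognizing that the leading contribution is the mode-shifting term rather than a diagonal one, and verifying that the choice of $V$ removes exactly the non-invertible first frequency. Here one leans on the analysis of $\hat{I}_E$ carried out in \cite{HmidiMateu-pairs}, reused through the decomposition \eqref{exp-I}.
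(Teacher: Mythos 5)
Your proposal is correct and follows essentially the same route as the paper: the Implicit Function Theorem applied to $\tilde{F}_E$, with the linearization $\partial_f \tilde{F}_E(0,0)h(w)=-\frac{1}{2\pi}\textnormal{Im}\left[h'(w)\right]$ obtained exactly as you describe, namely from pairing the singular part $\frac{w}{2\pi\varepsilon}$ of $I_E(\varepsilon,0)$ with the perturbation $iw\varepsilon h'(w)$ of the conformal factor, while the contributions of $\partial_f \hat{I}_E$ and $\partial_f\tilde{I}_E$ vanish via the Residue Theorem. The only cosmetic difference is that the paper kills the first Fourier mode through the identity $\partial_f V(0,0)\equiv 0$ coming from Proposition \ref{Prop-trivialsol2}, rather than by a projection onto $Y_{\alpha}$, but this does not alter the argument.
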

\begin{proof}
In order to look for solutions in the form \eqref{omega_epsilon2}, we need to study the functional $F_E$ defined in \eqref{F-euler}, where $\phi$ is given by \eqref{phi-euler}. Moreover, $V$ is a function of $(\varepsilon,f)$ described by \eqref{function-V}.

In Proposition \ref{Prop-regEuler}, we have that $\tilde{F}_E:\R\times B_{X_\alpha}(0,\sigma)\rightarrow Y_{\alpha}$, with $\tilde{F}_E(\varepsilon,f)=F_E(\varepsilon,f,V(\varepsilon,f))$, is well--defined and $C^1$, for $\varepsilon_0\in(0,\textnormal{min}\{1,\frac{l}{4}\})$ and $\sigma<1$. Then, we wish to apply the Implicit Function Theorem to $\tilde{F}_E$. By Proposition \ref{Prop-trivialsol} and Proposition \ref{Prop-trivialsol2}, we have that $\tilde{F}_E(0,0)(w)=0$, for any $w\in\T$.

Let us show that $\partial_f \tilde{F}_E(0,0)$ is an isomorphism:
\begin{align*}
\partial_f \tilde{F}_E(0,0)h(w)=\lim_{\varepsilon\rightarrow 0}\textnormal{Re}\Big[&\left\{\partial_f \overline{I_E(0,f)(w)}h(w)-\partial_f V(0, 0)h(w)\right\}iw\\
&+\left\{\overline{I_E(\varepsilon,0)(w)}-V_0\right\}iw\varepsilon h'(w)\Big],
\end{align*}
By Proposition \ref{Prop-trivialsol2}, we obtain $\partial_f V(0, f)h(w)\equiv 0$. Note also that
$$
\lim_{\varepsilon\rightarrow 0} I_E(\varepsilon,0)(w)=\lim_{\varepsilon\rightarrow 0}\left\{\frac{w}{2\pi\varepsilon}+V_0\right\}.$$
Moreover, by expression \eqref{exp-I}, we have
\begin{align*}
\partial_f {I_E(0,0)(w)}h(w)=&\frac{i}{4\pi^2}\overline{\int_{\T}\frac{\overline{h(w)}-\overline{h(\xi)}}{w-\xi}\, d\xi}-\frac{i}{4\pi^2}\overline{\int_{\T}\frac{(h(w)-h(\xi))(\overline{w}-\overline{\xi})}{(w-\xi)^2}\, d\xi}\\
&+\frac{i}{4\pi^2}\overline{\int_{\T}\frac{\overline{w}-\overline{\xi}}{w-\xi}h'(\xi)\, d\xi}+\partial_{f}\tilde{I}_{E}(0, 0)h(w)\\
=&\frac{i}{4\pi^2}\overline{\int_{\T}\frac{\overline{h(w)}-\overline{h(\xi)}}{w-\xi}\, d\xi}-\frac{i}{4\pi^2}\overline{\int_{\T}\frac{(h(w)-h(\xi))(\overline{w}-\overline{\xi})}{(w-\xi)^2}\, d\xi}\\
&+\frac{i}{4\pi^2}\overline{\int_{\T}\frac{\overline{w}-\overline{\xi}}{w-\xi}h'(\xi)\, d\xi}.
\end{align*}
Then, we obtain
\begin{align*}
\partial_f \tilde{F}_E(0,0)h(w)=&\textnormal{Re}\left[\left\{\frac{1}{4\pi^2}\overline{\int_{\T}\frac{\overline{h(w)}-\overline{h(\xi)}}{w-\xi}\, d\xi}-\frac{1}{4\pi^2}\overline{\int_{\T}\frac{(h(w)-h(\xi))(\overline{w}-\overline{\xi})}{(w-\xi)^2}\, d\xi}\right.\right.\\
&\left.\left.+\frac{1}{4\pi^2}\overline{\int_{\T}\frac{\overline{w}-\overline{\xi}}{w-\xi}h'(\xi)\, d\xi}\right\}w+\frac{i}{2\pi}h'(w)\right]\\
=&\textnormal{Im}\left[\left\{\frac{i}{4\pi^2 }\overline{\int_{\T}\frac{\overline{h(w)}-\overline{h(\xi)}}{w-\xi}\, d\xi}-\frac{i}{4\pi^2 }\overline{\int_{\T}\frac{(h(w)-h(\xi))(\overline{w}-\overline{\xi})}{(w-\xi)^2}\, d\xi}\right.\right.\\
&\left.\left.+\frac{i}{4\pi^2 }\overline{\int_{\T}\frac{\overline{w}-\overline{\xi}}{w-\xi}h'(\xi)\, d\xi}\right\}w-\frac{1}{2\pi}h'(w)\right].
\end{align*}
By the Residue Theorem, we have
\begin{align*}
\int_{\T}\frac{\overline{w}-\overline{\xi}}{w-\xi}h'(\xi)\, d\xi=&0,\\
\int_{\T}\frac{\overline{h(w)}-\overline{h(\xi)}}{w-\xi}\, d\xi-\int_{\T}\frac{(h(w)-h(\xi))(\overline{w}-\overline{\xi})}{(w-\xi)^2}\, d\xi=&2i\int_{\T}\frac{\textnormal{Im}\left[\overline{(h(w)-h(\xi))}({w}-{\xi})\right]}{(w-\xi)^2}\, d\xi=0.
\end{align*}
Finally, we find
\begin{align}\label{linop}
\partial_f \tilde{F}_E(0,0)h(w)=-\frac{1}{2\pi}\textnormal{Im}\left[h'(w)\right],
\end{align}
which is an isomorpshim from $X$ to $Y$.
\end{proof}
\begin{rem}
Analyzing \cite{HmidiMateu-pairs}, we realize that the above linearized operator \eqref{linop} agrees with the linearized operator in \cite{HmidiMateu-pairs} for the vortex pairs. This tells us that the only real contribution in the linearized operator is due to two vortex patches: ${\textbf 1}_{\varepsilon D_1}$ and ${\textbf 1}_{-\varepsilon D_1+a-ih}$.
\end{rem}

\subsection{Quasi--geostrophic shallow water equation}
In this section, we investigate the case of the quasi-geostrophic shallow water (QGSW) equations. Let $q$ be the potential vorticity, then the QGSW equations are given by
\begin{eqnarray*}         
       \left\{\begin{array}{ll}
          	q_t+(v\cdot \nabla) q=0, &\text{ in $[0,+\infty)\times\mathbb{R}^2$}, \\
         	 v=\nabla^\perp\psi,&\text{ in $[0,+\infty)\times\mathbb{R}^2$}, \\
         	 \psi=(\Delta-\lambda^2)^{-1}q,&\text{ in $[0,+\infty)\times\mathbb{R}^2$}, \\
         	 q(t=0,x)=q_0(x),& \text{ with $x\in\mathbb{R}^2$},
       \end{array}\right.
\end{eqnarray*}
with $\lambda\neq 0$. The same results to the Euler equations are obtained in this case. That is due to the similarity of the kernel in both cases, in particular, they have the same behavior close to 0. In Section \ref{Sec-NVP} we analyzed the case of the $N$-vortex problem, see Proposition \ref{Prop-PV-QGSW}. Here, we want to desingularize \eqref{PointVortex} in order to obtain periodic in space solutions that translate in the QGSW equation.

The stream function $\psi$ can be recovered in terms of $q$ in the following way
$$
\psi(t,x)=-\frac{1}{2\pi}\int_{\R^2}K_0(|\lambda||x-y|)q(t,y)\, dA(y).
$$
The function $K_0$ is the Modified Bessel function of order zero, whose definition and some of their properties can be found in Appendix \ref{Ap-specialfunctions}. It is of great interest the expansion of $K_0$ given in \eqref{K0-expansion} as
$$
K_0(z)=-\ln\left(\frac{z}{2}\right)I_0(z)+\sum_{k=0}^\infty\frac{\left(\frac{z}{2}\right)^{2k}}{(k!)^2}\varphi(k+1),
$$
where
$$
\varphi(1)=-{\gamma} \quad \text{ and }\quad \varphi(k+1)=\sum_{m=1}^k \frac{1}{m}-{\gamma},\, \, k\in\N^*.
$$
The constant ${\gamma}$ is the Euler's constant and the function $I_0$ is defined in Appendix \ref{Ap-specialfunctions}, but we recall it as
$$
I_0(z)=\sum_{k=0}^\infty\frac{\left(\frac{z}{2}\right)^{2k}}{k!\Gamma(k+1)}.
$$
Via this expansion, one notice that
\begin{equation}\label{expansionK_02}
K_0(z)=-\ln(z)+g_0(z)+g_1,
\end{equation}
where
\begin{align*}
g_0(z)=&-z^2\left(\ln(2)-\ln(z)\right)\sum_{k=1}^{\infty}\frac{\left(\frac{z}{2}\right)^{2k-2}}{k!\Gamma(k+1)}+z^2\sum_{k=1}^{\infty}\frac{\left(\frac{z}{2}\right)^{2k-2}}{(k!)^2}\varphi(k+1),\\
g_1=&-{\gamma}-\ln(2).
\end{align*}
Note that $g_0$ is smooth and $g_0(z)=O(z^2\ln(z))$ close to $0$.

Consider a K\'arm\'an Vortex Patch Street in the QGSW equations in the sense
\begin{align*}
q_0(x)=\frac{1}{	\pi}\sum_{k\in\Z}{\bf{1}}_{D_1+kl}(x)-\frac{1}{	\pi}\sum_{k\in\Z}{\bf{1}}_{D_2+kl}(x),
\end{align*}
where $D_1$ and $D_2$ are simply--connected bounded domains such that $|D_1|=|D_2|$, and $l>0$. Motivated by Euler equations, assume $D_2=-D_1+a-ih$, having the following distribution
\begin{align}\label{FAPointVortex-QGSW}
q_0(x)=\frac{1}{	\pi}\sum_{k\in\Z}{\bf{1}}_{D+kl}(x)-\frac{1}{	\pi}\sum_{k\in\Z}{\bf{1}}_{-D+a+kl-ih}(x),
\end{align}
where we are rewriting $D_1$ by $D$. The velocity field is given by
\begin{align}\label{QGSW-velocity}
v_0(x)=&\frac{\lambda i}{2\pi^2}\sum_{k\in\Z}\int_{D} K_1(\lambda|x-y-kl|)\frac{x-y-kl}{|x-y-kl|}\, dA(y)\nonumber\\
&-\frac{\lambda i}{2\pi^2}\sum_{k\in\Z}\int_{D} K_1(\lambda|x+y-a-kl+ih|)\frac{x+y-a-kl+ih}{|x+y-a-kl+ih|}\, dA(y)\nonumber\\
=&\frac{1}{2\pi^2}\sum_{k\in\Z}\int_{\partial D} K_0(\lambda|x-y-kl|)dy+\frac{1}{2\pi^2}\sum_{k\in\Z}\int_{\partial D} K_0(\lambda|x+y-a-kl+ih|)dy.
\end{align}
We scale the expression \eqref{FAPointVortex-QGSW} in the following way
$$
q_{0,\varepsilon}(x)=\frac{1}{\pi\varepsilon^2}\sum_{k\in\Z}{\bf{1}}_{\varepsilon D+kl}(x)-\frac{1}{\pi\varepsilon^2}\sum_{k\in\Z}{\bf{1}}_{-\varepsilon D+a+kl-ih}(x).
$$
As in the Euler equations, if $|D|=|\D|$, with $a\in\R$ and $h\neq 0$, we obtain the point model:
$$
q_{0,0}(x)=\sum_{k\in\Z}\delta_{(kl,0)}(x)-\sum_{k\in\Z}\delta_{(a+kl,-h)}(x),
$$
which has been studied in Proposition \ref{Prop-PV-QGSW}. Considering now a translating motion in the form $q(t,x)=q_{0}(x-Vt)$, with $V\in\C$, then we arrive at
$$
\left(v_0(x)-V\right)\cdot \nabla q_0(x)=0,\quad x\in\R^2.
$$
In the case of $q_{0,\varepsilon}$, we need to solve the above equation understood in the weak sense, i.e.,
\begin{equation}\label{eq1-QGSW}
\left(v_{0,\varepsilon}(x)-V\right)\cdot \vec{n}(x)=0,\quad x\in\partial\, (\varepsilon D+kl)\cup \partial\,  (-\varepsilon D+a+kl-ih),
\end{equation}
which, as for the Euler equations, reduces to 
\begin{equation}\label{eq2-QGSW}
\textnormal{Re}\left[\overline{\left(v_{0,\varepsilon}(x)-V\right)} \vec{n}(x)\right]=0,\quad x\in\partial\, (\varepsilon D),
\end{equation}
written in the complex sense. With the use of the conformal map \eqref{phi-euler}:
\begin{equation*}
\phi(w)=i (w+\varepsilon f(w)), \quad f(w)=\sum_{n\geq 1}a_nw^{-n}, \quad a_n\in\R, w\in\T,
\end{equation*}
it agrees with
\begin{equation}\label{F-QGSW}
F_{QGSW}(\varepsilon,f,V)(w):=\textnormal{Re}\left[\left\{\overline{I_{QGSW}(\varepsilon,f)(w)}-\overline{V}\right\}{w}{\phi'(w)}\right]=0, \quad w\in\T,
\end{equation}
where
\begin{align}\label{Iepsilon-QGSW}
I_{QGSW}(\varepsilon,f)(w)=v_{0,\varepsilon}(\varepsilon \phi(w)).
\end{align}
Then,
\begin{align*}
I_{QGSW}(\varepsilon,f)(w):=&\frac{1}{2\pi^2 \varepsilon}\sum_{k\in\Z} \int_{\T} K_0(\lambda|\varepsilon(\phi(w)-\phi(\xi))-kl|)\phi'(\xi)\, d\xi\\
&+\frac{1}{2\pi^2 \varepsilon}\sum_{k\in\Z} \int_{\T} K_0(\lambda|\varepsilon(\phi(w)+\phi(\xi))-a-kl+ih|)\phi'(\xi)\, d\xi.
\end{align*}
Via the expansion of $K_0$, given in \eqref{expansionK_02}, one has
\begin{align}\label{decompI}
I_{QGSW}(\varepsilon,f)(w)=&-\frac{1}{2\pi^2 \varepsilon}\sum_{k\in\Z} \int_{\T} \ln\left|\lambda(\varepsilon(\phi(w)-\phi(\xi))-kl)\right|\phi'(\xi)\, d\xi\nonumber\\
&-\frac{1}{2\pi^2 \varepsilon}\sum_{k\in\Z} \int_{\T} \ln\left|(\varepsilon(\phi(w)+\phi(\xi))-a-kl+ih)\right|\phi'(\xi)\, d\xi\nonumber\\
&+\frac{1}{2\pi^2 \varepsilon}\sum_{k\in\Z} \int_{\T} g_0\left(|\varepsilon(\phi(w)-\phi(\xi))-kl|\right)\phi'(\xi)\, d\xi\nonumber\\
&+\frac{1}{2\pi^2 \varepsilon}\sum_{k\in\Z} \int_{\T} g_0\left(\lambda|\varepsilon(\phi(w)+\phi(\xi))-a-kl+ih|\right)\phi'(\xi)\, d\xi\nonumber\\
=&I_{E}(\varepsilon,f)(w)+\tilde{I}_{QGSW}(\varepsilon,f)(w),
\end{align}
where $I_E$ is the corresponding function associated to Euler equations, see \eqref{Iepsilon}.

The analogue to Propositions \ref{Prop-trivialsol}, \ref{Prop-trivialsol2} and \ref{Prop-regEuler}, and Theorem \ref{Th-euler} are obtained, whose proofs are very similar and so here we omit many details. Remark that $a=0$ or $a=\frac{l}{2}$.

\begin{pro}\label{Prop-trivialsolQGSW}
For any  $h\neq 0$ and $l>0$, the following equation is verified
$$
F_{QGSW}(0, 0, V_0)(w)=0, \quad w\in\T,
$$
where $V_0$ is given by \eqref{velocity_PV-QGSW}:
$$
V_0=\frac{\lambda i}{2\pi}\sum_{ k\in\Z}K_1(\lambda|a+kl-ih|)\frac{a+kl-ih}{|a+kl-ih|}.
$$
\end{pro}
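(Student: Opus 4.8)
The plan is to mimic the proof of Proposition~\ref{Prop-trivialsol}, using the decomposition \eqref{decompI} that writes $I_{QGSW}=I_E+\tilde I_{QGSW}$, where $I_E$ is the Euler functional \eqref{Iepsilon} and $\tilde I_{QGSW}$ collects the two $g_0$--sums. Setting $f=0$ gives $\phi(w)=iw$ and $w\phi'(w)=iw$, so
$$
F_{QGSW}(0,0,V_0)(w)=\lim_{\varepsilon\to0}\textnormal{Re}\left[\left\{\overline{I_{QGSW}(\varepsilon,0)(w)}-\overline{V_0}\right\}iw\right].
$$
The whole proof reduces to the single limit
$$
I_{QGSW}(\varepsilon,0)(w)-\frac{w}{2\pi\varepsilon}\ \longrightarrow\ V_0\qquad(\varepsilon\to0),
$$
with $V_0$ as in \eqref{velocity_PV-QGSW}. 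Indeed, granting this, the bracket above equals $\overline{(w/2\pi\varepsilon)}\,iw+o(1)=\tfrac{i}{2\pi\varepsilon}+o(1)$, whose real part is $o(1)$, and the claim follows.

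First I would dispose of the Euler part. By \eqref{lim-Ie} of Proposition~\ref{Prop-trivialsol2}, $I_E(\varepsilon,0)(w)-\frac{w}{2\pi\varepsilon}\to V_0^{E}$, where $V_0^{E}$ denotes the Euler street velocity \eqref{velocity_PV}; the singular term $\frac{w}{2\pi\varepsilon}$ comes precisely from the self--interaction of the disc at $k=0$, exactly as for Euler. Hence the target limit is equivalent to showing that the smooth remainder satisfies $\tilde I_{QGSW}(\varepsilon,0)(w)\to V_0-V_0^{E}$.

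Next I would analyse the two $g_0$--sums in $\tilde I_{QGSW}$. In the self--row sum, with argument $\lambda|\varepsilon(\phi(w)-\phi(\xi))-kl|$, the term $k=0$ is $O(\varepsilon^2\ln\varepsilon)$ since $g_0(z)=O(z^2\ln z)$ near the origin, hence negligible after dividing by $\varepsilon$; for $k\neq0$ the argument is bounded below, so Taylor's formula \eqref{Taylor-formula} applies (the restriction $\varepsilon_0<\frac{l}{4}$ of Remark~\ref{rem-taylor} guaranteeing the smallness hypothesis), the constant order integrates to zero against $\phi'$, and the leading $O(1)$ term is odd in $k$, so it cancels in the symmetric sum \eqref{sym-sum}; this is the patch analogue of the fact that a single periodic row is stationary. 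In the cross--row sum, with argument $\lambda|\varepsilon(\phi(w)+\phi(\xi))-a-kl+ih|$ bounded away from zero for all $k$ because $h\neq0$, Taylor's formula \eqref{Taylor-formula} again kills the constant order and, after division by $\varepsilon$, leaves a finite quantity proportional to $\sum_{k\in\Z}g_0'(\lambda|a+kl-ih|)\frac{a+kl-ih}{|a+kl-ih|}$. Invoking $g_0'(z)=K_0'(z)+\frac1z=-K_1(z)+\frac1z$ together with $V_0^{E}=\frac{i}{2\pi}\sum_{k\in\Z}\frac{a+kl-ih}{|a+kl-ih|^2}$, the $\frac1z$--part recombines with $V_0^{E}$ while the $-K_1$--part rebuilds $V_0=\frac{\lambda i}{2\pi}\sum_{k\in\Z}K_1(\lambda|a+kl-ih|)\frac{a+kl-ih}{|a+kl-ih|}$, yielding $\tilde I_{QGSW}(\varepsilon,0)(w)\to V_0-V_0^{E}$ as required.

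The main obstacle is this cross--row computation: one must rigorously interchange $\varepsilon\to0$, the symmetric sum and the $\xi$--integral, and observe that the $\frac1z$--part of $g_0'$ is only conditionally summable, so that both its convergence and its cancellation against $V_0^{E}$ hinge on the symmetric summation convention \eqref{sym-sum}, exactly as in the Newtonian computation of Proposition~\ref{Prop-trivialsol}; the $K_1$--part, by contrast, converges absolutely thanks to the exponential decay of $K_1$. Once this matching is carried out, $F_{QGSW}(0,0,V_0)(w)=0$ for every $w\in\T$ follows.
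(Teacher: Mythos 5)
Your proposal is correct, but it takes a genuinely different route from the paper's own proof. The paper proves Proposition \ref{Prop-trivialsolQGSW} by a direct computation with the Bessel kernel: it applies the Stokes theorem \eqref{Stokes} together with $K_0'=-K_1$ to each sum, uses the expansion \eqref{expansionK_02} only to isolate the singular $k=0$ self-interaction (whose logarithmic part reduces to the Cauchy-type integral already handled in Proposition \ref{Prop-trivialsol}, and whose $g_0$ part is $O(\varepsilon\ln\varepsilon)$), kills the $k\neq 0$ self-row terms by oddness under the symmetric sum, and obtains $V_0$ directly from the cross-row sum by dominated convergence; all the sums appearing there converge absolutely thanks to the exponential decay of $K_1$. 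You instead route everything through the Euler case: invoking the decomposition \eqref{decompI} and the limit \eqref{lim-Ie}, you reduce the statement to $\tilde I_{QGSW}(\varepsilon,0)\to V_0-V_0^{E}$, where $V_0^{E}$ is the Euler street velocity \eqref{velocity_PV}, and you establish this with Taylor's formula and the identity $g_0'(z)=-K_1(z)+\tfrac1z$: the $\tfrac1z$ piece reproduces $-V_0^{E}$, cancelling the Euler contribution, while the $-K_1$ piece rebuilds $V_0$. Your organization is coherent with how the paper itself treats the later QGSW statements (Proposition \ref{Prop-regQGSW} and Theorem \ref{Th-QGSW} both lean on \eqref{decompI} to recycle the Euler analysis), and what it buys is reuse of Proposition \ref{Prop-trivialsol2} with essentially no new kernel computation. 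What it costs is that your row sums now involve $g_0'$, which decays only like $1/z$ at infinity, so both the self-row and cross-row pieces are merely conditionally convergent: every interchange of $\varepsilon\to 0$, the $\xi$-integral and the sum must be justified through the symmetric summation convention \eqref{sym-sum}, e.g.\ by pairing the $k$ and $-k$ terms, whose combined contribution is $O(1/k^2)$ uniformly for small $\varepsilon$. The paper's direct $K_1$ computation avoids this bookkeeping entirely, since there the conditional convergence is confined to the self-row terms; you correctly identify this interchange as the main remaining obstacle, and carrying it out as indicated completes the proof.
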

{{\begin{proof}
Using definition \eqref{F-QGSW}, we need to check that
\begin{align*}
\lim_{\varepsilon\rightarrow 0}\textnormal{Re}\Big[\Big\{&-\frac{i}{2\pi^2\varepsilon} \sum_{k\in\Z} \overline{\int_{\T}K_0(\lambda|\varepsilon i(w-\xi)-kl|)\, d\xi}\\
&-\frac{i}{2\pi^2\varepsilon} \sum_{k\in\Z} \overline{\int_{\T}K_0(\lambda|\varepsilon i(w+\xi)-a-kl+ih|)\, d\xi}-\overline{V_0}\Big\}w i\Big]=0.
\end{align*}
Via the Stokes Theorem, the expansion of $K_0$ given in \eqref{expansionK_02} and noting that
\begin{equation}\label{property-nabla}
2\partial_{\overline{x}} G(|ix+x'|)=-iG'(|ix+x'|)\frac{ix+x'}{|ix+x'|^2},
\end{equation}
for $x,x'\in\C$ and some function $G:\R\rightarrow\R$, it reduces to 
\begin{align*}
\lim_{\varepsilon\rightarrow 0}& \textnormal{Re}\Big[\Big\{\frac{\lambda}{2\pi^2\varepsilon}  \int_{\D}\frac{\, dA(y)}{w-y}-\frac{i}{2\pi^2\varepsilon}\overline{\int_{\T}g_0(\varepsilon\lambda|w-\xi|)\, d\xi}-\frac{i}{2\pi^2\varepsilon}\sum_{0\neq k\in\Z}\overline{\int_{\T}K_0(\lambda|\varepsilon i(w-\xi)-kl|)\, d\xi}\\
&-\frac{\lambda\varepsilon}{2\pi^2\varepsilon} \sum_{k\in\Z} \overline{i\int_{\D}K_1(\lambda|\varepsilon i(w+y)-a-kl+ih|)\frac{\varepsilon i(w-y)-a-kl+ih}{|\varepsilon i(w-y)-a-kl+ih|} \, dA(y)}-\overline{V_0}\Big\}w i\Big].
\end{align*}
Note that
$$
\lim_{\varepsilon\rightarrow 0} \frac{g_0(\varepsilon\lambda|w-\xi|)}{\varepsilon}=0,
$$
and
\begin{align*}
\lim_{\varepsilon\rightarrow 0}\sum_{0\neq k\in\Z} \frac{i}{\varepsilon}\int_{\T}K_0(\lambda|\varepsilon i(x-\xi)-kl|)\, d\xi=&\lim_{\varepsilon\rightarrow 0}i\lambda\sum_{0\neq k\in\Z}\int_{\D} K_1(\lambda|\varepsilon i(w-\xi)-kl|)\frac{\varepsilon i(w-\xi)-kl}{|\varepsilon i(w-\xi)-kl|^2}\\
=&i\lambda\sum_{0\neq k\in\Z}\int_{\D} K_1(\lambda|kl|)\frac{kl}{|kl|^2}=0,
\end{align*}
making use of the Dominated Convergence Theorem. Secondly, via the definition of $V_0$ in \eqref{velocity_PV-QGSW}, one has that
$$
-\frac{\lambda}{2\pi^2} \sum_{k\in\Z} \overline{i\int_{\D}K_1(\lambda|-a-kl+ih|)\frac{-a-kl+ih}{|-a-kl+ih|} \, dA(y)}-\overline{V_0}=0.
$$
The left term is also zero using the computations in Proposition \ref{Prop-trivialsol}.
\end{proof}}}

We avoid the proof of the following result, due to the similarity with Proposition \ref{Prop-trivialsol2}.
\begin{pro}\label{Prop-trivialsol2QGSW}
The function $V:(-\varepsilon_0,\varepsilon_0)\times B_{X_{\alpha}}(0,\sigma)\longrightarrow \R$, given by 
\begin{align}\label{function-V-QGSW}
V(\varepsilon,f)=\frac{\int_{\T}\overline{ I_{QGSW}(\varepsilon,f)(w)}{w}{\phi'(w)}(1-\overline{w}^2)dw}{\int_{\T} {w}{\phi'(w)}(1-\overline{w}^2)dw},
\end{align}
fulfills $V(0, f)=V_0$, where $V_0$ is defined in \eqref{velocity_PV-QGSW}. The parameters satisfy: $\varepsilon_0\in(0,\textnormal{min}\{1,\frac{l}{4}\})$, $\sigma<1$, $\alpha\in(0,1)$, and $X$ is defined in \eqref{X}.
\end{pro}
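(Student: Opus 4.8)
The plan is to mirror the proof of Proposition~\ref{Prop-trivialsol2}, exploiting the decomposition $I_{QGSW}=I_E+\tilde{I}_{QGSW}$ recorded in \eqref{decompI}. First I would dispose of the denominator of \eqref{function-V-QGSW}. Since the conformal map \eqref{phi-euler} gives $\phi'(w)=i(1+\varepsilon f'(w))\to i$, the Residue Theorem yields
$$
\lim_{\varepsilon\rightarrow 0}\int_{\T}w\phi'(w)(1-\overline{w}^2)\,dw=i\int_{\T}w(1-\overline{w}^2)\,dw=2\pi,
$$
so the denominator converges to a nonzero constant and is $C^1$ in $(\varepsilon,f)$ on the stated parameter range.

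The heart of the matter is to identify the limiting behaviour of $I_{QGSW}(\varepsilon,f)(w)$ as $\varepsilon\rightarrow 0$. Using the expansion \eqref{expansionK_02} of $K_0$, the logarithmic part reproduces exactly the Euler integrand $I_E$, while the smooth remainder $g_0(z)=O(z^2\ln z)$ near the origin generates $\tilde{I}_{QGSW}$. Only the self-interaction term $k=0$ of the first sum is singular, and, precisely as in \eqref{lim-Ie}, it behaves like $\frac{w}{2\pi\varepsilon}$; all other contributions stay bounded. The estimates already carried out in Proposition~\ref{Prop-trivialsolQGSW}---the Dominated Convergence argument for the $k\neq 0$ terms, the observation that $\frac{1}{\varepsilon}g_0(\varepsilon\lambda|\cdots|)\rightarrow 0$, and the cancellation built into the definition \eqref{velocity_PV-QGSW} of $V_0$---show that
$$
\lim_{\varepsilon\rightarrow 0}I_{QGSW}(\varepsilon,f)(w)=\lim_{\varepsilon\rightarrow 0}\left\{\frac{w}{2\pi\varepsilon}+V_0\right\},
$$
with $V_0$ given by \eqref{velocity_PV-QGSW}, which is real under the hypothesis $a=0$ or $a=\frac{l}{2}$.

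Inserting this into the numerator of \eqref{function-V-QGSW}, the constant part $V_0$ contributes $V_0\int_{\T}w\phi'(w)(1-\overline{w}^2)\,dw$, whereas the singular part $\frac{\overline{w}}{2\pi\varepsilon}$ produces, after letting $\phi'\rightarrow i$, a multiple of $\int_{\T}\overline{w}\,w(1-\overline{w}^2)\,dw=\int_{\T}(1-\overline{w}^2)\,dw=0$, again by the Residue Theorem. Hence the singular term drops out thanks to the factor $(1-\overline{w}^2)$, and dividing numerator by denominator gives
$$
V(0,f)=V_0\frac{\int_{\T}w(1-\overline{w}^2)\,dw}{\int_{\T}w(1-\overline{w}^2)\,dw}=V_0.
$$

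The only genuinely delicate point, and the one place where the argument departs from the purely algebraic Euler computation, is verifying that $\tilde{I}_{QGSW}$ contributes nothing in the limit: this rests on the $O(z^2\ln z)$ vanishing of $g_0$ at the origin and on passing the limit through the infinite sums over $k\neq 0$ via Dominated Convergence, which is licit because of the exponential decay of $K_1$ at infinity. Since exactly these estimates were already obtained in Proposition~\ref{Prop-trivialsolQGSW}, the remaining work is the same Residue-Theorem bookkeeping as in Proposition~\ref{Prop-trivialsol2}, so the details may be safely omitted.
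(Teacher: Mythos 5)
Your first three paragraphs are correct and follow exactly the route the paper intends: the paper in fact omits this proof, deferring to Proposition~\ref{Prop-trivialsol2}, and your argument mirrors it faithfully. The denominator tends to $2\pi$ by the Residue Theorem; the limit
\begin{equation*}
\lim_{\varepsilon\rightarrow 0}I_{QGSW}(\varepsilon,f)(w)=\lim_{\varepsilon\rightarrow 0}\left\{\frac{w}{2\pi\varepsilon}+V_0\right\},\qquad V_0 \ \textnormal{ as in \eqref{velocity_PV-QGSW}},
\end{equation*}
is correctly extracted from the estimates of Proposition~\ref{Prop-trivialsolQGSW} (vanishing of the $k=0$ remainder $\frac{1}{\varepsilon}g_0(\varepsilon\lambda|\cdot|)$, Dominated Convergence and the $\pm k$ symmetry for the first row, and the second--row sum producing exactly \eqref{velocity_PV-QGSW}); and the singular term is killed in the numerator because $\int_{\T}\overline{w}w(1-\overline{w}^2)\,dw=0$.

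Your closing paragraph, however, asserts something false, and it contradicts your own limit above: $\tilde{I}_{QGSW}$ does \emph{not} contribute nothing in the limit. Only its $k=0$ term vanishes (that is where $g_0(z)=O(z^2\ln z)$ is used), and the first--row terms $k\neq 0$ cancel by the symmetric summation; but the second--row sum in $\tilde{I}_{QGSW}$ survives. Expanding $g_0$ to first order in $\varepsilon$ and using $\int_{\T}\phi'(\xi)\,d\xi=0$ and $\int_{\T}\overline{\phi(\xi)}\phi'(\xi)\,d\xi\rightarrow 2\pi i$, one finds
\begin{equation*}
\lim_{\varepsilon\rightarrow 0}\tilde{I}_{QGSW}(\varepsilon,f)(w)=-\frac{\lambda i}{2\pi}\sum_{k\in\Z}g_0'\left(\lambda|a+kl-ih|\right)\frac{a+kl-ih}{|a+kl-ih|},
\end{equation*}
and since $g_0'(r)=\frac{1}{r}-K_1(r)$ (from $K_0=-\ln+g_0+g_1$ and $K_0'=-K_1$), this equals the difference between \eqref{velocity_PV-QGSW} and \eqref{velocity_PV}, which is nonzero for $\lambda\neq 0$. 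This nonvanishing limit is precisely the mechanism that converts the Euler constant in $\lim I_E$ (namely \eqref{velocity_PV}) into the Bessel constant \eqref{velocity_PV-QGSW} in $\lim I_{QGSW}$; if $\tilde{I}_{QGSW}$ really disappeared, your displayed limit in the second paragraph would have to carry the Euler speed instead, and the proposition as stated would be false. The error is not load--bearing, because your second and third paragraphs treat the full kernel $K_0$ through Proposition~\ref{Prop-trivialsolQGSW} and never invoke the claimed vanishing, so the proof itself stands; but the remark should be corrected to say that the $k=0$ part of $\tilde{I}_{QGSW}$ contributes no singular term, while its remaining bounded contribution is exactly the piece that is absorbed into $V_0$.
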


The next result concerns the well-definition of $F_{QGSW}$ in the spaces defined in \eqref{X}--\eqref{Y}.

\begin{pro}\label{Prop-regQGSW}
If $V$ sets \eqref{function-V-QGSW}, then $$\tilde{F}_{QGSW}:(-\varepsilon_0,\varepsilon_0)\times B_{X_\alpha}(0,\sigma)\rightarrow Y_\alpha,$$ with $\tilde{F}_{QGSW}(\varepsilon,f)=F_{QGSW}(\varepsilon,f,V(\varepsilon,f))$, is well--defined and $C^1$. The parameters satisfy $\alpha\in(0,1)$, $\varepsilon_0\in(0,\textnormal{min}\{1,\frac{l}{4}\})$ and $\sigma<1$.
\end{pro}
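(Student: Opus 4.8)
The plan is to reproduce \emph{verbatim} the three-step architecture of the proof of Proposition \ref{Prop-regEuler} (symmetry of the functional, regularity of $V$, regularity of $\tilde F_{QGSW}$), exploiting the decomposition \eqref{decompI}, namely $I_{QGSW}=I_E+\tilde I_{QGSW}$. Since $I_E$ is exactly the Euler functional \eqref{Iepsilon} already treated in Proposition \ref{Prop-regEuler}, all the new work reduces to controlling the extra piece $\tilde I_{QGSW}$, which is built from the two families of integrals of $g_0$ produced by the expansion \eqref{expansionK_02} of $K_0$, together with the constant $g_1$. First I would dispose of $g_1$: since $\int_\T\phi'(\xi)\,d\xi=0$ by the Residue Theorem, the constant term of \eqref{expansionK_02} contributes nothing. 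I would then record the symmetry step, which is insensitive to the precise radial kernel: the reflection identity $\phi(\overline w)=-\overline{\phi(w)}$ furnished by \eqref{phi-euler} for $a=0$ and $a=\frac{l}{2}$ forces $I_{QGSW}(\varepsilon,f)(\overline w)=\overline{I_{QGSW}(\varepsilon,f)(w)}$ and hence $F_{QGSW}(\varepsilon,f,V)(\overline w)=-F_{QGSW}(\varepsilon,f,V)(w)$, while the same computation shows that the $V$ of \eqref{function-V-QGSW} is real and that its particular choice kills the first Fourier frequency, placing $\tilde F_{QGSW}$ in the target space \eqref{Y}.

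The analytic core is the regularity of the $g_0$-integrals, and here I would separate the self-interaction term $k=0$ of the first row from all the remaining terms. For $k\neq 0$ in the first row and for \emph{every} term of the second (cross) row, the argument of $g_0$ stays bounded away from the origin, so $g_0$ is evaluated in its smooth regime; the Taylor formula \eqref{Taylor-formula} (cf. Remark \ref{rem-taylor}, which is precisely where $\varepsilon_0<\frac{l}{4}$ is used to guarantee $|z_2|<|z_1|$) then yields $C^1$ dependence on $(\varepsilon,f)$. The infinite sum over $k$ converges because the full $K_0$-series converges absolutely by the exponential decay of $K_0$; equivalently, the logarithmically growing tail of $g_0$ is neutralised against $\int_\T\phi'\,d\xi=0$ in the symmetric-sum sense \eqref{sym-sum}. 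For the self-interaction term I would invoke the crucial bound $g_0(z)=O(z^2\ln z)$ near $0$: the expression $\tfrac1\varepsilon\,g_0\big(\lambda\varepsilon|\phi(w)-\phi(\xi)|\big)$ absorbs the factor $\varepsilon^{-1}$ and leaves a quantity of size $\varepsilon\ln\varepsilon$.

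This last point is exactly where the main obstacle lies, and it is the only genuinely new feature compared with Proposition \ref{Prop-regEuler}. In the Euler computation the self-interaction remainder was $\ln|H(\varepsilon z)|=\varepsilon^2G_1(\varepsilon,z)$ with $G_1$ \emph{smooth}, so no logarithm of $\varepsilon$ ever appeared; for the QGSW kernel the $z^2\ln z$ behaviour of $g_0$ produces a genuine $\varepsilon\ln\varepsilon$ contribution to $I_{QGSW}$, whose $\varepsilon$-derivative blows up as $\varepsilon\to 0$ and would destroy the $C^1$ property on its own. The resolution I would carry out is to check that this $\varepsilon\ln\varepsilon$ term is annihilated once it is inserted into $F_{QGSW}=\textnormal{Re}[\{\overline{I_{QGSW}}-\overline V\}w\phi'(w)]$ and restricted to $\T$: its coefficient is proportional to $\overline{\int_\T|\phi(w)-\phi(\xi)|^2\phi'(\xi)\,d\xi}$, which, paired with $w\phi'(w)$, gives a purely imaginary number on $\T$ (at leading order $\textnormal{Re}[\overline{2\pi w}\,iw]=0$), exactly the mechanism by which the divergent $\frac{w}{2\pi\varepsilon}$ term was killed in Proposition \ref{Prop-trivialsol}.

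The reality and reflection argument of the first step is what guarantees this cancellation of the singular logarithm beyond leading order, and whatever smooth finite part survives is absorbed by the definition \eqref{function-V-QGSW} of $V$. Once the $\varepsilon\ln\varepsilon$ contribution has been removed in this way, $\tilde F_{QGSW}$ inherits the $C^1$ regularity of the smooth remainders $g_0$ and of the Euler part $\hat I_E,\tilde I_E$ analysed in Proposition \ref{Prop-regEuler}, and the well-definedness into \eqref{Y} follows from the symmetry step. I therefore expect the whole statement to reduce to the single delicate verification that the logarithmic factor in $g_0$ does not obstruct differentiability at $\varepsilon=0$, every other ingredient being a transcription of the Euler argument.
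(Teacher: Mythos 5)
Your proposal is correct and follows the same architecture as the paper's proof: the symmetry step via $\phi(\overline{w})=-\overline{\phi(w)}$ and \eqref{property-integral}, and the reduction of the regularity question to the Euler case through the decomposition \eqref{decompI}. The difference is one of thoroughness, and it is in your favour. The paper's own proof consists of the symmetry computation followed by the single assertion that, thanks to \eqref{decompI}, the regularity problem ``reduces to the same one for the Euler equations''; it never examines the $g_0$--integrals. You correctly observe that this reduction is not literal: the Euler remainder satisfies $\ln|H(\varepsilon z)|=\varepsilon^2G_1(\varepsilon,z)$ with $G_1$ smooth, whereas $g_0(z)=-\tfrac{z^2}{4}\ln z+O(z^2)$ near the origin, so the $k=0$ term of $\tilde{I}_{QGSW}$ genuinely contains $\varepsilon\ln\varepsilon$ times a coefficient which does not vanish at $\varepsilon=0$ (it is a nonzero real multiple of $w$, since $i\int_{\T}|w-\xi|^2\,d\xi=2\pi w$). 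Hence $\tilde{I}_{QGSW}$ itself is continuous but \emph{not} $C^1$ in $\varepsilon$ at $\varepsilon=0$, and the $C^1$ property of $\tilde{F}_{QGSW}$ can only come from the cancellation you describe: $\textnormal{Re}\left[\overline{cw}\,iw\right]=0$ for real $c$, so the coefficient of $\varepsilon\ln\varepsilon$ inside $F_{QGSW}$ vanishes at $\varepsilon=0$ and is smooth, leaving a contribution of the form $\varepsilon^2\ln\varepsilon$ times a smooth quantity, which is $C^1$ with vanishing derivative at $\varepsilon=0$. This is precisely the verification the paper leaves implicit, and your write-up is the more complete one. Your treatment of the remaining pieces (the constant $g_1$ killed by $\int_{\T}\phi'(\xi)\,d\xi=0$, the $k\neq 0$ and cross terms via \eqref{Taylor-formula} under $\varepsilon_0<\frac{l}{4}$, the symmetric-sum handling \eqref{sym-sum} of the logarithmic tail of $g_0$) matches the paper's framework.

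Two small imprecisions in your final paragraph, neither fatal. First, nothing is ``absorbed by the definition of $V$'': once the leading-order cancellation holds, what survives is $\varepsilon^2\ln\varepsilon$ times a $C^1$ quantity, and that is already $C^1$; no further mechanism is required, nor does the reflection symmetry play a role ``beyond leading order''. Second, the same vanishing check must be run separately on the numerator of $V(\varepsilon,f)$ in \eqref{function-V-QGSW}, since $V$ enters $\tilde{F}_{QGSW}$; there the $\varepsilon\ln\varepsilon$ coefficient is paired with $w\phi'(w)(1-\overline{w}^2)$ and vanishes at $\varepsilon=0$ because $\int_{\T}(1-\overline{w}^2)\,dw=0$ and $\int_{\T}f'(w)(1-\overline{w}^2)\,dw=0$ by the Residue Theorem. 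That computation is analogous to the one you did, but it is a distinct step and should be stated.
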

{{\begin{proof}
Note the similarity of this proposition to Proposition \ref{Prop-regEuler}. Following its ideas, in order to check the symmetry of $F_{QGSW}$, it is enough to prove that
$$
I_{QGSW}(\varepsilon,f)(\overline{w})=\overline{I_{QGSW}(\varepsilon,f)(w)}.
$$
We take advantage of \eqref{property-integral}. Via its definition, note that
\begin{align*}
\overline{I_{QGSW}(\varepsilon,f)({w})}=&-\frac{1}{2\pi^2 \varepsilon}\sum_{k\in\Z} \int_{\T} K_0(\lambda|\varepsilon(\phi(w)-\phi(\overline{\xi}))-kl|)\overline{\phi'(\overline{\xi})}\, d\xi\\
&-\frac{1}{2\pi^2 \varepsilon}\sum_{k\in\Z} \int_{\T} K_0(\lambda|\varepsilon(\phi(w)+\phi(\overline{\xi}))-a-kl+ih|)\overline{\phi'(\overline{\xi})}\, d\xi\\
=&\frac{1}{2\pi^2 \varepsilon}\sum_{k\in\Z} \int_{\T} K_0(\lambda|\varepsilon(\phi(\overline{w})-\phi({\xi}))+kl|){\phi'({\xi})}\, d\xi\\
&+\frac{1}{2\pi^2 \varepsilon}\sum_{k\in\Z} \int_{\T} K_0(\lambda|\varepsilon(\phi(\overline{w})+\phi({\xi}))-a+kl+ih|){\phi'({\xi})}\, d\xi\\
=&I_{QGSW}(\varepsilon,f)(\overline{w}).
\end{align*}
Note that using the decomposition of $I_{QGSW}$ given in \eqref{decompI}, the regularity problem reduces to the same one for the Euler equations, done in Proposition \ref{Prop-regEuler}.
\end{proof}}}

Finally, we state the result concerning the desingularization of the K\'arm\'an Vortex Street.
\begin{theo}\label{Th-QGSW}
Let $h, l\in\R$, with $h\neq 0$ and $l>0$, and $a=0$ or $a=\frac{l}{2}$. Then, there exist $D^{\varepsilon}$ such that
\begin{equation}\label{omega_epsilon2-qgsw}
q_{0,\varepsilon}(x)=\frac{1}{\pi\varepsilon^2}\sum_{k\in\Z}{\bf 1}_{\varepsilon D^{\varepsilon}+kl}(x)-\frac{1}{\pi\varepsilon^2}\sum_{k\in\Z}{\bf 1}_{-\varepsilon D^{\varepsilon}+a-ih+kl}(x),
\end{equation}
defines a horizontal translating solution of the quasi--geostrophic shallow water equations, with constant velocity speed, for any $\varepsilon\in(0,\varepsilon_0)$ and small enough $\varepsilon_0>0$. Moreover, $D^{\varepsilon}$ is at least $C^1$.
\end{theo}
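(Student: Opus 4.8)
The plan is to apply the Implicit Function Theorem to the functional $\tilde{F}_{QGSW}(\varepsilon,f)=F_{QGSW}(\varepsilon,f,V(\varepsilon,f))$, exactly as in the Euler case of Theorem \ref{Th-euler}, with $\phi$ given by \eqref{phi-euler} and $V$ fixed by \eqref{function-V-QGSW}. Propositions \ref{Prop-trivialsolQGSW}, \ref{Prop-trivialsol2QGSW} and \ref{Prop-regQGSW} already supply the ingredients that are not the crux: $\tilde{F}_{QGSW}\colon(-\varepsilon_0,\varepsilon_0)\times B_{X_\alpha}(0,\sigma)\to Y_\alpha$ is well defined and $C^1$, and $\tilde{F}_{QGSW}(0,0)\equiv 0$ on $\T$ (combining $F_{QGSW}(0,0,V_0)=0$ from Proposition \ref{Prop-trivialsolQGSW} with $V(0,0)=V_0$ from Proposition \ref{Prop-trivialsol2QGSW}). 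Hence the only remaining point is to show that $\partial_f\tilde{F}_{QGSW}(0,0)$ is an isomorphism from $X_\alpha$ onto $Y_\alpha$.

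For this I would exploit the splitting $I_{QGSW}=I_E+\tilde{I}_{QGSW}$ recorded in \eqref{decompI}. Since $V(0,f)\equiv V_0$ by Proposition \ref{Prop-trivialsol2QGSW}, one has $\partial_f V(0,0)=0$, so the $V$--derivative drops out exactly as in the Euler computation and $\partial_f\tilde{F}_{QGSW}(0,0)$ splits as the Euler contribution $\partial_f\tilde{F}_E(0,0)$ plus the contribution of $\tilde{I}_{QGSW}$. The claim is that this last summand vanishes, so that the linearized operator coincides with the one computed in \eqref{linop}, namely $\partial_f\tilde{F}_{QGSW}(0,0)h(w)=-\frac{1}{2\pi}\mathrm{Im}[h'(w)]$. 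This is expected because $\tilde{I}_{QGSW}$ is built from the remainder $g_0$ in the expansion \eqref{expansionK_02}, and $g_0(z)=O(z^2\ln z)$ near the origin: the singular diagonal ($k=0$) piece carries a factor $g_0(\varepsilon\lambda|\cdot|)/\varepsilon=O(\varepsilon\ln\varepsilon)$, which together with its $f$--derivative tends to $0$ as $\varepsilon\to0$, while the off--diagonal ($k\neq0$) terms and the entire second sum (whose arguments stay bounded away from $0$ since $h\neq0$) are smooth in $(\varepsilon,f)$ and, using $\int_\T\phi'(\xi)\,d\xi=0$ via the Residue Theorem, produce no term in the $\varepsilon\to0$ limit of the linearization.

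Once the linearized operator is identified with $h\mapsto-\frac{1}{2\pi}\mathrm{Im}[h'(w)]$, it is an isomorphism $X_\alpha\to Y_\alpha$ precisely as in Theorem \ref{Th-euler} (this is where the vanishing of the first frequency in $Y_\alpha$ is used). The Implicit Function Theorem then furnishes $\varepsilon_0>0$ and a $C^1$ curve $\varepsilon\mapsto f_\varepsilon\in X_\alpha$ with $f_0=0$ and $\tilde{F}_{QGSW}(\varepsilon,f_\varepsilon)=0$ for all $\varepsilon\in(0,\varepsilon_0)$. Setting $D^\varepsilon$ to be the domain bounded by $\phi(\T)$ with $\phi(w)=i(w+\varepsilon f_\varepsilon(w))$, equation \eqref{F-QGSW} holds on $\T$, which is the weak translating condition \eqref{eq2-QGSW}; undoing the reductions shows that \eqref{omega_epsilon2-qgsw} is a horizontal translating solution with speed $V(\varepsilon,f_\varepsilon)$, and $f_\varepsilon\in X_\alpha\subset C^{1,\alpha}(\T)$ gives that $\partial D^\varepsilon$ is $C^{1,\alpha}$, hence at least $C^1$. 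I expect the main obstacle to be the rigorous justification that $\tilde{I}_{QGSW}$ does not contribute to the linearization: one must interchange the $\varepsilon\to0$ limit with the $f$--derivative and the $\xi$--integration, balance the $O(z^2\ln z)$ vanishing of $g_0$ against the $1/\varepsilon$ prefactor, and at the same time secure absolute convergence of the $k$--sums, which follows from the exponential decay of $K_0$ recalled after \eqref{QGSW-psi1}.
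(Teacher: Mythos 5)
Your proposal is correct and follows essentially the same route as the paper: the same reduction to the Implicit Function Theorem via Propositions \ref{Prop-trivialsolQGSW}, \ref{Prop-trivialsol2QGSW} and \ref{Prop-regQGSW}, the same decomposition $I_{QGSW}=I_E+\tilde{I}_{QGSW}$ from \eqref{decompI}, and the same conclusion that the $g_0$--remainder (being $O(z^2\ln z)$ at the origin, smooth in the off--diagonal and staggered terms) drops out of the linearization, so that $\partial_f\tilde{F}_{QGSW}(0,0)h=-\frac{1}{2\pi}\mathrm{Im}[h'(w)]$ is the Euler isomorphism. Your added justifications (the $O(\varepsilon\ln\varepsilon)$ balance, $\int_{\T}\phi'(\xi)\,d\xi=0$ for the $k\neq 0$ sums, and the exponential decay of $K_0$ for absolute convergence) are precisely the details the paper leaves implicit.
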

{{\begin{proof}
By Proposition \ref{Prop-regQGSW}, we have that $\tilde{F}_{QGSW}:\R\times B_{X_\alpha}(0,\sigma)\rightarrow Y_\alpha$, with $\tilde{F}_{QGSW}(\varepsilon,f)=F_{QGSW}(\varepsilon,f,V(\varepsilon,f))$, is well--defined and $C^1$, for $\varepsilon_0\in(0,\textnormal{min}\{1,\frac{l}{4}\})$ and $\sigma<1$. Moreover, Proposition \ref{Prop-trivialsolQGSW} and Proposition \ref{Prop-trivialsol2QGSW} give us that $\tilde{F}_{QGSW}(0,0)(w)=0$, for any $w\in\T$. In order to apply the Implicit Function Theorem, let us check that $\partial_f \tilde{F}_{QGSW}(0,0)$ is an isomorphism.

First, using \eqref{decompI}, one achieves
\begin{align*}
I_{QGSW}(\varepsilon,f)(w)=&I_{E}(\varepsilon,f)(w)+\frac{1}{2\pi^2 \varepsilon}\sum_{k\in\Z} \int_{\T} g_0\left(|\varepsilon(\phi(w)-\phi(\xi))-kl|\right)\phi'(\xi)\, d\xi\nonumber\\
&+\frac{1}{2\pi^2 \varepsilon}\sum_{k\in\Z} \int_{\T} g_0\left(\lambda|\varepsilon(\phi(w)+\phi(\xi))-a-kl+ih|\right)\phi'(\xi)\, d\xi\nonumber,
\end{align*}
where $g_0$ is a smooth function such that $g_0(z)=O(z^2\ln(z))$ for small $z$, see \eqref{expansionK_02}.
Note that
$$
\lim_{\varepsilon\rightarrow 0}\varepsilon I_{QGSW}(\varepsilon,f)(w)=\lim_{\varepsilon\rightarrow 0}\varepsilon I_{E}(\varepsilon,f)(w)
$$
and
$$
\lim_{\varepsilon\rightarrow 0}\partial_f \overline{I_{QGSW}(\varepsilon,0)}=\lim_{\varepsilon\rightarrow 0}\partial_f \overline{I_{E}(\varepsilon,0)}.
$$
Thus, we have
$$
\partial_f \tilde{F}_{QGSW}(\varepsilon, 0)h(w)=\partial_f \tilde{F}_{E}(\varepsilon,0)h(w)=-\frac{1}{2\pi}\textnormal{Im}\left[h'(w)\right],
$$
which is an isomorphism from $X$ to $Y$.
\end{proof}}}

\section{K\'arm\'an Vortex Street in general models}\label{Sec3}
K\'arm\'an Vortex Patch Street structures are found both in the Euler equations and in the QGSW equations. The important fact in both models is that the Green functions associated to the elliptic problem of the stream function have the same behavior close to 0, having then the same linearized operator. We can extend it to other models, where the generalized surface quasi--geostrophic equations are a particular case, see Theorem \ref{Th-gSQG} for more details. Here, let us work with the general model:
\begin{eqnarray} \label{Generaleq}
       \left\{\begin{array}{ll}
          	q_t+(v\cdot \nabla) q=0, &\text{ in $[0,+\infty)\times\mathbb{R}^2$}, \\
         	 v=\nabla^\perp \psi,&\text{ in $[0,+\infty)\times\mathbb{R}^2$}, \\
         	 \psi=G*q,&\text{ in $[0,+\infty)\times\mathbb{R}^2$}, \\
         	 q(t=0,x)=q_0(x),& \text{ with $x\in\mathbb{R}^2$}.
       \end{array}\right.
\end{eqnarray}

\subsection{Scaling the equation}
The aim of this section is to look for solutions of the type
\begin{align*}
q_0(x)=\frac{1}{\pi}\sum_{k\in\Z}{\bf{1}}_{D_1+kl}(x)-\frac{1}{\pi}\sum_{k\in\Z}{\bf{1}}_{D_2+kl}(x).
\end{align*}
The domains $D_1$ and $D_2$ are simply--connected bounded domains such that $|D_1|=|D_2|$, and $l>0$. Consider $D_2=-D_1+a-ih$, having the following distribution
\begin{align}\label{FAPointVortex-gen}
q_0(x)=\frac{1}{\pi}\sum_{k\in\Z}{\bf{1}}_{D+kl}(x)-\frac{1}{\pi}\sum_{k\in\Z}{\bf{1}}_{-D+a+kl-ih}(x),
\end{align}
where we are rewriting $D_1$ by $D$. The velocity field is given by
\begin{align}\label{gen-velocity}
\pi v_0(x)=&2i\partial_{\overline{x}}\sum_{k\in\Z}\int_{D} G(|x-y-kl|)\, dA(y)-2i\partial_{\overline{x}}\sum_{k\in\Z}\int_{D} G(|x+y-a-kl+ih|)\, dA(y)\nonumber\\
=&-2i\sum_{k\in\Z}\int_{D}\partial_{\overline{y}}G(|x-y-kl|)\, dA(y)-2i\sum_{k\in\Z}\int_{D} \partial_{\overline{y}} G(|x+y-a-kl+ih|)\, dA(y)\nonumber\\
=&-\sum_{k\in\Z}\int_{\partial D} G(|x-y-kl|)dy-\sum_{k\in\Z}\int_{\partial D} G(|x+y-a-kl+ih|)dy,
\end{align}
where $\partial_{\overline{x}}$ is defined in \eqref{gradient-complex} and the Stokes Theorem \eqref{Stokes} is used. In order to introduce the point model configuration, let us scale the equation in the following way. For any $\varepsilon>0$, define
\begin{equation}\label{omega_epsilon-gen}
q_{0,\varepsilon}(x)=\frac{1}{\pi\varepsilon^2}\sum_{k\in\Z}{\bf{1}}_{\varepsilon D+kl}(x)-\frac{1}{\pi\varepsilon^2}\sum_{k\in\Z}{\bf{1}}_{-\varepsilon D+a+kl-ih}(x),
\end{equation}
for $l>0$, $h\neq 0$ and $a\in\R$. If $|D|=|\D|$, then we arrive at the point vortex street \eqref{PointVortex-gen} in the limit when $\varepsilon\rightarrow 0$, i.e.,
\begin{equation}\label{PV-reg}
q_{0,0}(x)=\sum_{k\in\Z}\delta_{(kl,0)}(x)-\sum_{k\in\Z}\delta_{(a+kl,-h)}(x).
\end{equation}
This configuration of points is studied in Proposition \ref{Gen-point}, from the dynamical system point of view, showing that \eqref{PV-reg} translates. From now on, take $a=0$ or $a=\frac{l}{2}$ having a horizontal translation in the point model. The associated velocity field to \eqref{omega_epsilon-gen} is given by
$$
v_{0,\varepsilon}(\varepsilon x)=-\frac{1}{\pi \varepsilon}\sum_{k\in\Z}\int_{\partial D} G(|\varepsilon(x-y)-kl|)dy-\frac{1}{\pi\varepsilon}\sum_{k\in\Z}\int_{\partial D} G(|\varepsilon(x+y)-a-kl+ih|)dy.
$$
We introduce now the conformal map. Consider $\phi:\T\rightarrow \partial D$ such that
\begin{equation}\label{phi-gen}
\phi(w)=i \left(w+\frac{\varepsilon}{G(\varepsilon)} f(w)\right), \quad f(w)=\sum_{n\geq 1}a_nw^{-n}, \quad a_n\in\R, w\in\T.
\end{equation}
Hence
\begin{align*}
v_{0,\varepsilon}(\varepsilon\phi(w))=&-\frac{1}{\pi\varepsilon}\sum_{k\in\Z}\int_{\T} G(|\varepsilon(\phi(w)-\phi(\xi))-kl|)\phi'(\xi)\, d\xi\\
&-\frac{1}{\pi\varepsilon}\sum_{k\in\Z}\int_{\T} G(|\varepsilon(\phi(w)+\phi(\xi))-a-kl+ih|)\phi'(\xi)\, d\xi.
\end{align*}
\begin{rem}
The constant $G(\varepsilon)$ in the definition of the conformal map \eqref{phi-euler} comes from the singularity of the kernel in the general case. For the logarithmic singularities, we do not need to add this constant because there we use the structure of the logarithm. When having more singular kernels, as in this case, we need to introduce $G(\varepsilon)$.
\end{rem}

Assuming that we look for translating solutions, i.e., $q(t,x)=q_0(x- Vt)$, we arrive at the equation
\begin{equation}\label{F-gen}
F(\varepsilon,f,V)(w):=\textnormal{Re}\left[\left\{\overline{I(\varepsilon,f)(w)}- \overline{V}\right\}{w}{\phi'(w)}\right]=0, \quad w\in\T,
\end{equation}
where
$$
I(\varepsilon,f)(w):=v_{0,\varepsilon}(\varepsilon\phi(w)).
$$
The next step is to check that if $\varepsilon=0$, $D=\D$ and $V=V_0$ (referring to the K\'arm\'an Point Vortex Street), equation \eqref{F-gen} is verified.

\begin{pro}\label{Prop-trivialsol-gen}
Let $G$ satisfies
\begin{enumerate}
\item[(H1)] $G$ is radial such that $G(x)=\tilde{G}(|x|)$,
\item[(H2)] there exists $R>0$ and $\beta_1\in(0,1]$ such that $|\tilde{G}'(r)|\leq \frac{C}{r^{1+\beta_1}}$, for $r\geq R$,
\item[(H3)]\label{H3} there exists $\beta_2\in(0,1)$ such that $G(z)=O\left(\frac{1}{z^{\beta_2}}\right)$ and $\log|z|=o\left(G(z)\right),$ as $z\rightarrow 0$. 
\end{enumerate}
For any  $h\neq 0$ and $l>0$, the following equation is verified
$$
F(0, 0, V_0)(w)=0, \quad w\in\T,
$$
where $V_0$ is given by \eqref{V_0-gen}:
$$
V_0=i\sum_{k\in\Z} G'(|a+kl-ih|)\frac{a+kl-ih}{|a+kl-ih|}.
$$
\end{pro}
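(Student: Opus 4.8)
The plan is to mirror the proofs of Propositions \ref{Prop-trivialsol} and \ref{Prop-trivialsolQGSW}. Setting $f=0$ forces $\phi(w)=iw$ and $\phi'(w)=i$, so $w\phi'(w)=iw$ and
$$F(\varepsilon,0,V_0)(w)=\textnormal{Re}\left[\left\{\overline{I(\varepsilon,0)(w)}-\overline{V_0}\right\}iw\right].$$
I would split $I(\varepsilon,0)$ into three pieces: the self--interaction term (the $k=0$ summand of the first sum), the remaining same--row terms (the $k\neq0$ summands of the first sum), and the cross--row terms (the second sum). The goal is to show that the self--interaction term contributes \emph{exactly} zero to $F$ for every $\varepsilon$, that the same--row remainder vanishes as $\varepsilon\to0$ by an odd--symmetry cancellation, and that the cross--row terms converge precisely to $V_0$ given by \eqref{V_0-gen}.

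For the self--interaction term $-\frac{i}{\pi\varepsilon}\int_{\T}G(\varepsilon|w-\xi|)\,d\xi$, I would exploit the radiality (H1): writing $w=e^{i\theta_0}$ and $\xi=e^{i\theta}$, the quantity $|w-\xi|=2|\sin((\theta-\theta_0)/2)|$ depends only on $\theta-\theta_0$, and a direct computation gives
$$\int_{\T}G(\varepsilon|w-\xi|)\,d\xi=iw\,C(\varepsilon),\qquad C(\varepsilon):=\int_0^{2\pi}G\!\left(2\varepsilon\left|\sin(s/2)\right|\right)\cos(s)\,ds\in\R,$$
the imaginary part of the angular integral cancelling by the symmetry $s\mapsto 2\pi-s$. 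Hypothesis (H3), namely $G(z)=O(z^{-\beta_2})$ with $\beta_2\in(0,1)$, guarantees that $C(\varepsilon)$ is finite. Consequently the self--interaction term equals $\frac{C(\varepsilon)}{\pi\varepsilon}w$, a real multiple of $w$, and its contribution to $F$ is $\frac{C(\varepsilon)}{\pi\varepsilon}\,\textnormal{Re}[i]=0$. This is the step where the argument departs from the Euler case: we no longer have the explicit $\cot$ or $\ln|\sin|$ representations and must instead read off the $w$--proportionality directly from the symmetry.

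For the same--row ($k\neq0$) and cross--row terms the argument of $G$ stays bounded away from the origin once $\varepsilon$ is small, so I would apply the Taylor formula \eqref{Taylor-formula} with base points $z_1=-kl$ and $z_1=-a-kl+ih$ respectively and increments $z_2=O(\varepsilon)$. The zeroth--order contributions $G(|z_1|)$ integrate to zero over $\T$ because $\int_{\T}d\xi=0$, while the first--order contributions, after division by $\varepsilon$, pass to the limit by dominated convergence; the summability of the dominating series is exactly what (H2) provides through $|G'(r)|\le C r^{-1-\beta_1}$. Evaluating the resulting contour integrals, the same--row sum collapses to $-i\sum_{0\neq k\in\Z}G'(|kl|)\frac{kl}{|kl|}=0$ by odd symmetry in $k$, whereas the cross--row sum yields $i\sum_{k\in\Z}G'(|a+kl-ih|)\frac{a+kl-ih}{|a+kl-ih|}=V_0$. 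Combining the three pieces gives $\lim_{\varepsilon\to0}I(\varepsilon,0)(w)=V_0$ up to the purely $w$--proportional self term, whence $F(0,0,V_0)(w)=\textnormal{Re}[(\overline{V_0}-\overline{V_0})iw]=0$. The main obstacle is the self--interaction term, which genuinely blows up like $\varepsilon^{-1-\beta_2}$ as $\varepsilon\to0$; the resolution is that it is a real multiple of $w$ and hence invisible to $F$, exactly as in Proposition \ref{Prop-trivialsol}.
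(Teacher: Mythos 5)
Your proof is correct, and its skeleton coincides with the paper's: the same three--way splitting of $I(\varepsilon,0)$ into self--interaction, same--row ($k\neq 0$) and cross--row pieces, the same observation that the self--interaction term is a real multiple of $w$ and hence invisible to $F$, and the same use of dominated convergence, of (H2) for summability in $k$, and of the symmetric summation \eqref{sym-sum} to kill the same--row limit and identify the cross--row limit with $V_0$. Your treatment of the self--interaction term (angular parametrization plus the $s\mapsto 2\pi-s$ symmetry) is just an explicit rewriting of the paper's argument, which performs the rotation $\xi\mapsto w\xi$ and notes that $\int_{\T}G(\varepsilon|1-\xi|)\,d\xi$ is purely imaginary; the two computations are identical in substance. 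The one place you genuinely deviate is the handling of the nonsingular terms: the paper converts the contour integrals into area integrals over $\D$ via the Stokes theorem \eqref{Stokes} together with \eqref{property-nabla}, so that the compensating factor of $\varepsilon$ appears automatically by differentiating the kernel, whereas you stay on the contour and use the Taylor formula \eqref{Taylor-formula}, killing the $O(1/\varepsilon)$ zeroth--order term with $\int_{\T}d\xi=0$ and letting the first--order term carry the $\varepsilon$. The two devices are interchangeable here, and indeed the paper itself uses exactly your Taylor decomposition for the same integrals $I_2$, $I_3$ in the proof of Proposition \ref{Prop-trivialsol2Qgen}, so nothing is lost. The only point you should make explicit is that \eqref{Taylor-formula} requires $|z_2|<|z_1|$, i.e. $2\varepsilon<\min_{k}|a+kl-ih|$, which for $a=0$ bounds $\varepsilon$ in terms of $|h|$ as well as $l$; this is harmless, since the statement only concerns the limit $\varepsilon\to 0$, but it is the analogue of the check the paper carries out before invoking the Taylor formula.
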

\begin{rem}
From Hypothesis (H3) we are assuming that the kernel is more singular than the logarithmic kernel analyzed in the previous sections, but less singular than the kernel of the surface quasi--geostrophic equation. A typical kernel satisfying (H3) is the one coming from the generalized surface quasi--geostrophic equation: $G(x)=\frac{C_{\beta}}{2\pi}\frac{1}{|x|^{\beta}}$, for $\beta\in(0,1)$.
\end{rem}
{{\begin{proof}
By definition,
\begin{align*}
F(\varepsilon,0, V_0)(w)=&\textnormal{Re}\Big[\left\{\frac{i}{\pi\varepsilon}\overline{\int_{\T}G(\varepsilon|w-\xi|)\, d\xi}+\frac{i}{\pi\varepsilon}\sum_{0\neq k\in\Z}\overline{\int_{\T} G(|\varepsilon i(w-\xi)-kl|)\, d\xi}\right.\\
&\left.+\frac{i}{\pi\varepsilon}\sum_{k\in\Z}\overline{\int_{\T} G(|\varepsilon i(w+\xi)-a-kl+ih|)\, d\xi}-V_0\right\}iw\Big].
\end{align*}
Concerning the first term, we can compute it using
\begin{align}\label{int-K}
\textnormal{Re}\left[\frac{w}{\pi\varepsilon}\overline{\int_{\T}G(\varepsilon|w-\xi|)\, d\xi}\right]=\textnormal{Re}\left[\frac{w}{\pi\varepsilon}\overline{w}\overline{\int_{\T} G(\varepsilon|1-\xi|)\, d\xi}\right]
=\frac{1}{\pi\varepsilon}\textnormal{Re}\left[\overline{\int_{\T} G(\varepsilon|1-\xi|)\, d\xi}\right]=0,
\end{align}
since the integral is a pure complex number. For the second term, note that
\begin{align}\label{2ndterm-gen}
\lim_{\varepsilon \rightarrow 0}\frac{1}{\varepsilon}\sum_{0\neq k\in\Z}\int_{\T}& G(|\varepsilon i(w-\xi)-kl|)\, d\xi\nonumber\\
=&-\lim_{\varepsilon \rightarrow 0}\sum_{0\neq k\in\Z}\int_{\D}G'(|\varepsilon i(w-\xi)-kl|)\frac{\varepsilon i(w-\xi)-kl}{|\varepsilon i(w-\xi)-kl|}\, d\xi\nonumber\\
=&\sum_{0\neq k\in\Z}\int_{\D}G'(|kl|)\frac{kl}{|kl|}\, d\xi\nonumber\\
=&0,
\end{align}
via the Stokes Theorem \eqref{Stokes} and taking into account \eqref{property-nabla}. We have computed the above limit by using the Convergence Dominated Theorem and (H3). Moreover, the sum is vanishing because we are using the symmetry sum. Using again the Stokes Theorem for the third term, one arrives at
\begin{align*}
\lim_{\varepsilon \rightarrow 0}\left\{\frac{i}{\pi\varepsilon}\right.&\left.\sum_{k\in\Z}\overline{\int_{\T} G(|\varepsilon i(w+\xi)-a-kl+ih|)\, d\xi}-V_0\right\}\\
=&\lim_{\varepsilon \rightarrow 0}\left\{\frac{i}{\pi}\sum_{k\in\Z}\overline{\int_{\D} G'(|\varepsilon i(w+\xi)-a-kl+ih|)\frac{\varepsilon i(w+\xi)-a-kl+ih}{|\varepsilon i(w+\xi)-a-kl+ih|} \, d\xi}-V_0\right\}\\
=&\left\{\frac{i}{\pi}\sum_{k\in\Z}\overline{\int_{\D} G'(|-a-kl+ih|)\frac{-a-kl+ih}{|-a-kl+ih|} \, d\xi}-V_0\right\}\\
=&\left\{i\sum_{k\in\Z}\overline{ G'(|-a-kl+ih|)\frac{-a-kl+ih}{|-a-kl+ih|} \, d\xi}-V_0\right\}\\
=&0,
\end{align*}
by definition of $V_0$, which is given in \eqref{V_0-gen}. Again, the above limit is justified with the Convergence Dominated Theorem and (H3).
\end{proof}}}
In what follows, we will have to deal with the singularity in $\varepsilon$. But, there is a way to simplify the equation in order to control this singularity.  We decompose $I(\varepsilon,f)$ as
\begin{align}\label{I_exp}
-\pi I(\varepsilon,f)(w)=&\frac{1}{\varepsilon}\int_{\T} G(|\varepsilon(\phi(w)-\phi(\xi))|)\phi'(\xi)\, d\xi+\frac{1}{\varepsilon}\sum_{0\neq k\in\Z}\int_{\T} G(|\varepsilon(\phi(w)-\phi(\xi))-kl|)\phi'(\xi)\, d\xi\nonumber\\
&+\frac{1}{\varepsilon}\sum_{k\in\Z}\int_{\T} G(|\varepsilon(\phi(w)+\phi(\xi))-a-kl+ih|)\phi'(\xi)\, d\xi\nonumber\\
=&:I_1(\varepsilon,f)(w)+I_2(\varepsilon,f)(w)+I_3(\varepsilon,f)(w).
\end{align}
We can use Taylor formula \eqref{Taylor-formula}:
\begin{align*}
G(|z_1+z_2|)=G(|z_1|)+\int_0^1G'(|z_1+tz_2|)\frac{\textnormal{Re}\left[(z_1+tz_2)\overline{z_2}\right]}{|z_1+tz_2|}dt,
\end{align*}
for $z_1,z_2\in\C$ and { $|z_2|<|z_1|$}. In the case of $I_1$, take $z_1=i\varepsilon(w-\xi)$ and $z_2=i\frac{\varepsilon^2}{K(\varepsilon)}(f(w)-f(\xi)),$ implying
\begin{align*}
I_1(\varepsilon,f)=&\frac{i}{\varepsilon}\int_{\T}G(\varepsilon|w-\xi|)\, d\xi+i\frac{\varepsilon}{G(\varepsilon)}\int_{\T}\int_0^1G'\left(\varepsilon\left|(w-\xi)+t\frac{\varepsilon}{G(\varepsilon)}(f(w)-f(\xi))\right|\right)\\
&\times\frac{\textnormal{Re}\left[\left((w-\xi)+t\frac{\varepsilon}{G(\varepsilon)}(f(w)-f(\xi))\right)\overline{(f(w)-f(\xi))}\right]}{|(w-\xi)+t\frac{\varepsilon}{G(\varepsilon)}(f(w)-f(\xi))|}dt\, d\xi\\
&+\frac{i}{G(\varepsilon)}\int_{\T}G(\varepsilon|\phi(w)-\phi(\xi)|)f'(\xi)\, d\xi\\
=&I_{1,1}(\varepsilon,f)(w)+I_{1,2}(\varepsilon,f)(w)+I_{1,3}(\varepsilon,f)(w).
\end{align*}
Let us check that $|z_2|<|z_1|$:
$$
|z_2|=\frac{\varepsilon^2}{G(\varepsilon)}|f(w)-f(\xi)|\leq \frac{\varepsilon^2}{G(\varepsilon)}||f||_{C^1}|w-\xi|\leq \frac{\varepsilon^2}{G(\varepsilon)}|w-\xi|< |z_1|.
$$
In virtue of \eqref{int-K}, we get
$$
\textnormal{Re}\left[\frac{w}{\pi\varepsilon}\overline{\int_{\T}G(\varepsilon|w-\xi|)\, d\xi}\right]=0,
$$
and then the nonlinear function $F(\varepsilon,f,V)$ can be simplified as follows
\begin{align}\label{F2}
F(\varepsilon,f,V)=&\textnormal{Re}\left[\left\{\overline{I(\varepsilon,f)(w)}- \overline{V}\right\}{w}{\phi'(w)}\right]-\textnormal{Re}\left[\frac{wf'(w)}{\pi G(\varepsilon)}\overline{\int_{\T}G(\varepsilon|w-\xi|)\, d\xi}\right]\nonumber\\
=&\textnormal{Re}\left[\left\{\overline{I(\varepsilon,f)(w)}- \overline{V}\right\}{w}{\phi'(w)}\right]+\frac{i\int_{\T}G(\varepsilon|1-\xi|)\, d\xi}{\pi G(\varepsilon)}\textnormal{Im}\left[f'(w)\right].
\end{align}
We use the descomposition of $I(\varepsilon,f)$ in \eqref{I_exp}, and we are rewriting $I_1(\varepsilon,f)$ as
\begin{equation}\label{I1-decom}
I_1(\varepsilon,f):= I_{1,2}(\varepsilon,f)+I_{1,3}(\varepsilon,f),
\end{equation}
since there is any contribution of $I_{1,1}(\varepsilon,f)$.

In the next result, we provide how $V$ must depend on $\varepsilon$ and $f$.
\begin{pro}\label{Prop-trivialsol2Qgen}
Let $G$ satisfies the hypothesis (H1)--(H3) of Proposition \ref{Gen-point} and Proposition \ref{Prop-trivialsol-gen}, and
\begin{enumerate}
\item[(H4)] {$\frac{\tilde{G}(\varepsilon r)}{\tilde{G}(\varepsilon)\tilde{G}(r)}\rightarrow 1,\, \frac{\varepsilon\tilde{G}'(\varepsilon r)}{\tilde{G}(\varepsilon)\tilde{G}'(r)}\rightarrow 1,\textnormal{when } \varepsilon\rightarrow 0,
$
uniformly in $r\in(0,2)$.}
\end{enumerate} The function $V:(-\varepsilon_0,\varepsilon_0)\times B_{X_{1-\beta_2}}(0,\sigma)\longrightarrow \R$, given by 
\begin{align}\label{function-V-gen}
V(\varepsilon,f)=&\frac{\int_{\T}\overline{ I(\varepsilon,f)(w)}{w}{\phi'(w)}(1-\overline{w}^2)dw}{\int_{\T} {w}{\phi'(w)}(1-\overline{w}^2)dw},
\end{align}
fulfills $V(0, f)=V_0$, where $V_0$ is defined in \eqref{V_0-gen}. The parameters satisfy: $\varepsilon_0\in(0,\textnormal{min}\{1,\frac{l}{4}\})$, $\sigma<1$, and $X$ is defined in \eqref{X}.
\end{pro}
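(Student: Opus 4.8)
The proof will follow the scheme of Proposition \ref{Prop-trivialsol2} for the Euler equations, the genuinely new ingredient being the uniform control of the rescaled singular kernel provided by hypothesis (H4). Throughout I would use the conformal map \eqref{phi-gen}, so that $\phi'(w)=i\big(1+\tfrac{\varepsilon}{G(\varepsilon)}f'(w)\big)$, together with the fact that $\tfrac{\varepsilon}{G(\varepsilon)}\to 0$ as $\varepsilon\to 0$, which is a consequence of (H3) since $G(z)=O(z^{-\beta_2})\to+\infty$.

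First I would treat the denominator of \eqref{function-V-gen}. Expanding $\phi'$ and applying the Residue Theorem to the resulting Laurent monomials gives
$$
\int_{\T}w\phi'(w)(1-\overline{w}^2)dw=i\int_{\T}w(1-\overline{w}^2)dw+i\tfrac{\varepsilon}{G(\varepsilon)}\int_{\T}wf'(w)(1-\overline{w}^2)dw=2\pi+2\pi a_1\tfrac{\varepsilon}{G(\varepsilon)},
$$
which converges to $2\pi\neq 0$; in particular the quotient \eqref{function-V-gen} is well defined for $\varepsilon_0$ small. Next, for the numerator I would isolate the divergent contribution. Writing $I=-\tfrac1\pi(I_1+I_2+I_3)$ as in \eqref{I_exp} and $I_1=I_{1,1}+I_{1,2}+I_{1,3}$, the only term diverging as $\varepsilon\to0$ is the self-interaction $I_{1,1}(\varepsilon,f)(w)=\tfrac{i}{\varepsilon}\int_{\T}G(\varepsilon|w-\xi|)\,d\xi=\tfrac{iw}{\varepsilon}\int_{\T}G(\varepsilon|1-\eta|)\,d\eta$, which by \eqref{int-K} is a real multiple of $w$, say $\kappa(\varepsilon)w$ with $\kappa(\varepsilon)\in\R$. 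Its contribution to the numerator is
$$
\kappa(\varepsilon)\int_{\T}\overline{w}\,w\,\phi'(w)(1-\overline{w}^2)dw=\kappa(\varepsilon)\int_{\T}\phi'(w)(1-\overline{w}^2)dw=0,
$$
because $\int_{\T}\phi'(w)(1-\overline{w}^2)dw=i\int_{\T}\big(1+\tfrac{\varepsilon}{G(\varepsilon)}f'(w)\big)(1-\overline{w}^2)dw$ vanishes term by term, the expansion $f=\sum_{n\ge1}a_nw^{-n}$ producing no residues. Thus the singular part drops out exactly, playing the role of $\tfrac{w}{2\pi\varepsilon}$ in the Euler case.

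It then remains to show that the regular part of $\overline{I}$, integrated against $w\phi'(w)(1-\overline{w}^2)$, yields $2\pi\overline{V_0}=2\pi V_0$, where I recall that $V_0\in\R$ for $a=0,\tfrac l2$ by Proposition \ref{Gen-point}. Here I would reuse the limits already established in Proposition \ref{Prop-trivialsol-gen}: via the Stokes Theorem and \eqref{property-nabla}, the off-diagonal self terms in $I_2$ vanish through the symmetry sum exactly as in \eqref{2ndterm-gen}, while the leading part of the cross term $I_3$ converges, producing the constant $V_0$ and hence the contribution $\overline{V_0}\int_{\T}w\,i\,(1-\overline{w}^2)dw=2\pi V_0$. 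The remaining $f$-dependent pieces $I_{1,2},I_{1,3}$ of \eqref{I1-decom}, as well as the $f'$-parts of $I_2,I_3$ (which carry a prefactor $\tfrac1{G(\varepsilon)}\to0$), admit finite limits by (H4); a direct Fourier computation shows that the limits of $I_{1,2},I_{1,3}$ lie in the span of the negative frequencies $\{w^{-n}\}_{n\ge1}$ inherited from $f$, so that, after conjugation, their pairing against the weight $(w-\overline{w})$ is residue-free and vanishes, precisely as in the Euler computation and in the analysis of $\hat I$ in \cite{HmidiMateu-pairs}. Collecting the three contributions gives numerator $\to 2\pi V_0$ and therefore $V(0,f)=\tfrac{2\pi V_0}{2\pi}=V_0$.

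The main obstacle is exactly the scaling control encoded in (H4). Unlike the logarithmic kernels of the previous section, where the $\ln\varepsilon$ singularity was removed using only $\int_{\T}\phi'\,d\xi=0$, here the prefactor $\tfrac{\varepsilon}{G(\varepsilon)}$ multiplies a kernel evaluated at $\varepsilon r$, and only the convergences $\tfrac{\varepsilon\tilde G'(\varepsilon r)}{\tilde G(\varepsilon)\tilde G'(r)}\to1$ and $\tfrac{\tilde G(\varepsilon r)}{\tilde G(\varepsilon)\tilde G(r)}\to1$, uniform in $r\in(0,2)$, guarantee that $\tfrac{\varepsilon}{G(\varepsilon)}G'(\varepsilon r)$ and $\tfrac1{G(\varepsilon)}G(\varepsilon r)$ stay bounded and converge, so that Dominated Convergence legitimizes passing to the limit inside the integrals and the $f$-dependent corrections assemble into the residue-free combinations above.
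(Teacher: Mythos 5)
Your proposal is correct and follows essentially the same route as the paper: the same decomposition \eqref{I_exp}, removal of the singular self-interaction term (the paper discards $I_{1,1}$ beforehand via \eqref{int-K} and \eqref{I1-decom}; you instead keep it and check $\int_{\T}\phi'(w)(1-\overline{w}^2)\,dw=0$, which is an equivalent and arguably cleaner way to see that it never enters the numerator of $V$), the symmetry sum killing $I_2$ in the limit, the Taylor argument on $I_3$ producing the constant $V_0$, and (H4) together with dominated convergence to justify all limits. One inaccuracy worth flagging: it is not true that the limits of both $I_{1,2}$ and $I_{1,3}$ lie in the span of the negative frequencies $\{w^{-n}\}_{n\geq 1}$; the paper's computation \eqref{int-comp} shows that $I_{1,2}(0,f)$ also contains the positive-frequency terms $w^{n+2}$, $n\geq 1$, alongside the $\overline{w}^{n}$ terms. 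This does not break your argument, because after conjugation those terms pair against $w(1-\overline{w}^2)=w-w^{-1}$ to give $w^{-n-1}-w^{-n-3}$, which carries no residue for $n\geq 1$; but the vanishing must be checked for both frequency families (residues only occur at frequency $-1$, which neither family reaches), rather than deduced from a purely negative-frequency structure.
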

{{\begin{proof}
Note that
$$
V(0,f)=\lim_{\varepsilon\rightarrow 0}\frac{\int_{\T}\overline{ I(\varepsilon,f)(w)}{w}\phi'(w)(1-\overline{w}^2)dw}{i\int_{\T} {w}(1-\overline{w}^2)dw}=-\lim_{\varepsilon\rightarrow 0}\frac{\int_{\T}\overline{ I(\varepsilon,f)(w)}{w}(1+\frac{\varepsilon}{G(\varepsilon)}f'(w))(1-\overline{w}^2)dw}{2\pi i},
$$
via the Residue Theorem. We use the decomposition of $I(\varepsilon,f)$ given in \eqref{I_exp}. Let us begin with $I_1(\varepsilon,f)$:
\begin{align*}
I_1(\varepsilon,f)=&I_{1,2}(\varepsilon,f)(w)+I_{1,3}(\varepsilon,f)(w)\\
=&i\frac{\varepsilon}{G(\varepsilon)}\int_{\T}\int_0^1G'\left(\varepsilon\left|(w-\xi)+t\frac{\varepsilon}{G(\varepsilon)}(f(w)-f(\xi))\right|\right)\\
&\times\frac{\textnormal{Re}\left[\left((w-\xi)+t\frac{\varepsilon}{G(\varepsilon)}(f(w)-f(\xi))\right)\overline{(f(w)-f(\xi))}\right]}{|(w-\xi)+t\frac{\varepsilon}{G(\varepsilon)}(f(w)-f(\xi))|}dt\, d\xi\\
&+\frac{i}{G(\varepsilon)}\int_{\T}G(\varepsilon|\phi(w)-\phi(\xi)|)f'(\xi)\, d\xi.
\end{align*}
Using (H4) and the Dominated Convergence Theorem, we get
\begin{equation}\label{I1-0}
I_1(0,f)=i\int_{\T}\frac{G'(|w-\xi|)}{|w-\xi|}\textnormal{Re}\left[(w-\xi)\overline{(f(w)-f(\xi))}\right]\, d\xi+i\int_{\T}G(|w-\xi|)f'(\xi)\, d\xi.
\end{equation}
Note that the Dominated Convergence Theorem can be applied since the limit in (H4) is uniform. We can compute the above integrals in the following way
\begin{align}\label{int-comp}
\int_{\T}\frac{G'(|w-\xi|)}{|w-\xi|}\textnormal{Re}\left[(w-\xi)\overline{(f(w)-f(\xi))}\right]\, d\xi=&\sum_{n\geq 1}a_n\int_{\T}\frac{G'(|w-\xi|)}{|w-\xi|}\textnormal{Re}\left[(w-\xi){(w^n-\xi^n)}\right]\, d\xi\nonumber\\
=&\sum_{n\geq 1}\frac{a_n}{2}\Big\{\overline{w}^n\int_{\T}\frac{G'(|1-\xi|)}{|1-\xi|}\overline{\left[(1-\xi){(1-\xi^n)}\right]}\, d\xi\nonumber\\
&+{w}^{n+2}\int_{\T}\frac{G'(|1-\xi|)}{|1-\xi|}{\left[(1-\xi){(1-\xi^n)}\right]}\, d\xi\Big\},\nonumber\\
\int_{\T}G(|w-\xi|)f'(\xi)\, d\xi=&-\sum_{n\geq 1}a_n n\int_{\T}G(|w-\xi|)\frac{1}{\xi^{n+1}} \, d\xi\nonumber\\
=&-\sum_{n\geq 1}a_n n\overline{w}^n\int_{\T}G(|1-\xi|)\frac{1}{\xi^{n+1}} \, d\xi,
\end{align}
where $f(w)=\sum_{n\geq 1}a_nw^{-n}.$ Note also that
\begin{align*}
\int_{\T}\frac{G'(|1-\xi|)}{|1-\xi|}\overline{\left[(1-\xi){(1-\xi^n)}\right]}\, d\xi,\int_{\T}\frac{G'(|1-\xi|)}{|1-\xi|}{\left[(1-\xi){(1-\xi^n)}\right]}\, d\xi,\int_{\T}G(|1-\xi|)\frac{1}{\xi^{n+1}} \, d\xi\in i\R.
\end{align*}
In this way,
\begin{align*}
\lim_{\varepsilon\rightarrow 0}\int_{\T} \overline{I_1(\varepsilon,f)(w)}w&\left(1+\frac{\varepsilon}{G(\varepsilon)}f'(w)\right)(1-\overline{w}^2)dw=\int_{\T} \overline{I_1(0,f)(w)}w(1-\overline{w}^2)dw\\
=&\sum_{n\geq 1}a_n\Big\{\frac{i}{2}\int_{\T}\frac{G'(|1-\xi|)}{|1-\xi|}\overline{\left[(1-\xi){(1-\xi^n)}\right]}\, d\xi\int_{\T}w^nw(1-\overline{w}^2)dw\\
&+i\int_{\T}\frac{G'(|1-\xi|)}{|1-\xi|}{\left[(1-\xi){(1-\xi^n)}\right]}\, d\xi\int_{\T}\overline{w}^{n+2}w(1-\overline{w}^2)dw\\
&-in\int_{\T}G(|1-\xi|)\frac{1}{\xi^{n+1}} \, d\xi\int_{\T}w^nw(1-\overline{w}^2)dw\Big\}\\
=&0,
\end{align*}
by the Residue Theorem. Let us move on $I_3(\varepsilon,f)$. Here, we use also Taylor formula \eqref{Taylor-formula} for $z_1=-a-kl+ih$ and $z_2=\varepsilon(\phi(w)+\phi(\xi))$, finding
\begin{align}\label{I3-taylor}
I_3(\varepsilon,f)(w)=&\frac{i}{\varepsilon}\sum_{k\in\Z}\int_{\T}G(|-a-kl+ih|)\, d\xi\nonumber\\
&+i\sum_{k\in\Z}\int_{\T}\int_0^1G'\left(\left|-a-kl+ih+\varepsilon t(\phi(w)+\phi(\xi))\right|\right)\nonumber\\
&\times\frac{\textnormal{Re}\left[\left(-a-kl+ih+\varepsilon t(\phi(w)+\phi(\xi))\right)\overline{(\phi(w)+\phi(\xi))}\right]}{|-a-kl+ih+\varepsilon t(\phi(w)+\phi(\xi))|}dt\, d\xi\nonumber\\
&+\frac{i}{G(\varepsilon)}\sum_{k\in\Z}\int_{\T}G(|\varepsilon(\phi(w)+\phi(\xi))-a-kl+ih|)f'(\xi)\, d\xi\nonumber\\
=&i\sum_{k\in\Z}\int_{\T}\int_0^1G'\left(\left|-a-kl+ih+\varepsilon t(\phi(w)+\phi(\xi))\right|\right)\nonumber\\
&\times\frac{\textnormal{Re}\left[\left(-a-kl+ih+\varepsilon t(\phi(w)+\phi(\xi))\right)\overline{(\phi(w)+\phi(\xi))}\right]}{|-a-kl+ih+\varepsilon t(\phi(w)+\phi(\xi))|}dt\, d\xi\nonumber\\
&+\frac{i}{G(\varepsilon)}\sum_{k\in\Z}\int_{\T}G(|\varepsilon(\phi(w)+\phi(\xi))-a-kl+ih|)f'(\xi)\, d\xi\nonumber\\
=:&I_{3,1}(\varepsilon,f)(w)+I_{3,2}(\varepsilon,f)(w).
\end{align}
Let us check that $|z_2|<|z_1|$, which is necessary to use Taylor formula:
$$
|z_2|=\varepsilon|\phi(w)+\phi(\xi)|\leq 2\varepsilon||\phi||_{L^{\infty}}\leq 4\varepsilon<|a+l-ih|\leq |z_1|.
$$
Note that
$$
I_{3,2}(0,f)=0.
$$
For the other term, we achieve using (H4) that
\begin{align*}
I_{3,1}(0,f)(w)=&i\sum_{k\in\Z}\int_{\T}G'\left(\left|-a-kl+ih\right|\right)\frac{\textnormal{Im}\left[\left(-a-kl+ih\right)\overline{(w+\xi)}\right]}{|-a-kl+ih|}\, d\xi\\
=&\frac{1}{2}\sum_{k\in\Z}\int_{\T}G'\left(\left|-a-kl+ih\right|\right)\frac{\left(-a-kl+ih\right)\overline{(w+\xi)}}{|-a-kl+ih|}\, d\xi\\
&-\frac{1}{2}\sum_{k\in\Z}\int_{\T}G'\left(\left|-a-kl+ih\right|\right)\frac{\overline{\left(-a-kl+ih\right)}{(w+\xi)}}{|-a-kl+ih|}\, d\xi\\
=&i\pi\sum_{k\in\Z}G'\left(\left|-a-kl+ih\right|\right)\frac{\left(-a-kl+ih\right)}{|-a-kl+ih|}\\
=&-\pi V_0,
\end{align*}
by the Residue Theorem and the definition of $V_0$ given in \eqref{V_0-gen}. Using the same ideas for $I_2(\varepsilon,f)$, we find that
\begin{align*}
I_2(\varepsilon,f)(w)=&i\sum_{0\neq k\in\Z}\int_{\T}\int_0^1G'\left(\left|-kl+\varepsilon t(\phi(w)+\phi(\xi))\right|\right)\\
&\times\frac{\textnormal{Re}\left[\left(-kl+\varepsilon t(\phi(w)+\phi(\xi))\right)\overline{(\phi(w)+\phi(\xi))}\right]}{|-kl+\varepsilon t(\phi(w)+\phi(\xi))|}dt\, d\xi\\
&+\frac{i}{G(\varepsilon)}\sum_{0\neq k\in\Z}\int_{\T}G(|\varepsilon(\phi(w)+\phi(\xi))-kl|)f'(\xi)\, d\xi,
\end{align*}
and then,
$$
I_2(0,f)=-i\sum_{0\neq k\in\Z}kl\frac{G'(|kl|)}{|kl|}\textnormal{Re}\left[\overline{w-\xi}\right]\, d\xi=0.
$$
This implies that
$$
\lim_{\varepsilon\rightarrow 0}\int_{\T} \overline{I_2(\varepsilon,f)(w)}w\left(1+\frac{\varepsilon}{G(\varepsilon)}f'(w)\right)(1-\overline{w}^2)dw=0.
$$
Finally, we find
\begin{align*}
\lim_{\varepsilon\rightarrow 0}\int_{\T}\overline{ I(\varepsilon,f)(w)}{w}\left(1+\frac{\varepsilon}{G(\varepsilon)}f'(w)\right)(1-\overline{w}^2)dw=&-\frac{1}{\pi}\int_{\T}\overline{ I_{3,1}(0,f)(w)}{w}(1-\overline{w}^2)dw\\
=&V_0\int_{\T}{w}(1-\overline{w}^2)dw\\
=&-2\pi i V_0,
\end{align*}
getting the announced result, i.e.,
$
V(0,f)=V_0.
$
\end{proof}}}

\begin{pro}\label{Prop-reggen}
Let $G$ satisfies the hypothesis (H1)--(H4) of Proposition  \ref{Gen-point}, Proposition \ref{Prop-trivialsol-gen} and Proposition \ref{Prop-trivialsol2Qgen}, and
\begin{enumerate}
\item[(H5)] {$
\frac{d}{d\varepsilon}\frac{\tilde{G}(\varepsilon r)}{\tilde{G}(\varepsilon)\tilde{G}(r)}\rightarrow 0, \,  \frac{d}{d\varepsilon}\frac{\varepsilon\tilde{G}'(\varepsilon r)}{\tilde{G}(\varepsilon)\tilde{G}'(r)}\rightarrow 0, \textnormal{when } \varepsilon\rightarrow 0,
$
uniformly in $r\in(0,2)$.}
\end{enumerate}
If $V$ sets \eqref{function-V-gen}, then $$\tilde{F}:(-\varepsilon_0,\varepsilon_0)\times B_{X_{1-\beta_2}}(0,\sigma)\rightarrow Y_{1-\beta_2},$$ with $\tilde{F}(\varepsilon,f)=F(\varepsilon,f,V(\varepsilon,f))$, is well--defined and $C^1$. The spaces $X$ and $Y$ are defined in \eqref{X}-\eqref{Y} taking $\alpha=1-\beta_2$, and the parameters satisfy $\varepsilon_0\in(0,\textnormal{min}\{1,\frac{l}{4}\})$ and $\sigma<1$.
\end{pro}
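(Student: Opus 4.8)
The plan is to reproduce, for the general kernel, the three--step scheme already carried out for the Euler equations in Proposition \ref{Prop-regEuler} and for the QGSW equations in Proposition \ref{Prop-regQGSW}, the novelty being that the genuine singularity of $G$ at the origin (controlled by (H3)) forces us to work in $C^{0,1-\beta_2}$ and to extract the $\varepsilon$--scaling by means of the new hypotheses (H4)--(H5). Throughout I would use the simplified expression \eqref{F2} for $F$, in which the most singular self--interaction piece $I_{1,1}$ has already been removed (its contribution against $w\phi'(w)$ vanishes by \eqref{int-K}), together with the decomposition $I=I_1+I_2+I_3$ of \eqref{I_exp} and \eqref{I1-decom}.

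First I would establish the symmetry step, whose only purpose is to guarantee that the image lies in $Y_{1-\beta_2}$ and not merely in $\tilde{Y}_{1-\beta_2}$. As in Proposition \ref{Prop-regEuler}, radiality of $G$ via (H1) together with $\phi(\overline{w})=-\overline{\phi(w)}$ and the reflection identity \eqref{property-integral} yields $I(\varepsilon,f)(\overline{w})=\overline{I(\varepsilon,f)(w)}$; checking that the numerator and denominator of \eqref{function-V-gen} are both real gives $V\in\R$, and hence $F(\varepsilon,f,V)(\overline{w})=-F(\varepsilon,f,V)(w)$, so that $\tilde{F}(\varepsilon,f)$ expands only in sines. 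The special choice of $V$ in \eqref{function-V-gen} is then exactly what cancels the first sine coefficient, placing $\tilde{F}(\varepsilon,f)$ in $Y_{1-\beta_2}$.

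The analytic core is the regularity step. The denominator of \eqref{function-V-gen} equals $2\pi+o(1)$ and is plainly $C^1$ in $(\varepsilon,f)$, so it suffices to treat the numerator, i.e. each of $I_1,I_2,I_3$. The pieces $I_2$ and $I_3$ involve only the far interactions (indices $k\neq0$ in the same row and the whole second row); since the condition $\varepsilon_0<\frac{l}{4}$ of Remark \ref{rem-taylor} keeps the arguments bounded away from zero, the Taylor expansion \eqref{I3-taylor} (and its analogue for $I_2$) shows, after differentiating under the sum by (H4)--(H5) and using the decay (H2) for convergence, that these terms are $C^1$. I expect the genuine obstacle to be the self--interaction $I_1=I_{1,2}+I_{1,3}$, which carries the full singularity of $G$ at the diagonal: here one must use (H4) to replace $G(\varepsilon|\cdot|)$ by $G(\varepsilon)\,G(|\cdot|)$ in the limit, recognising the weakly singular operators $f\mapsto\int_{\T}G(|w-\xi|)f'(\xi)\,d\xi$ and $f\mapsto\int_{\T}\frac{G'(|w-\xi|)}{|w-\xi|}\textnormal{Re}[(w-\xi)\overline{(f(w)-f(\xi))}]\,d\xi$, whose boundedness into $C^{0,1-\beta_2}$ is the potential--theoretic estimate that fixes the value $\alpha=1-\beta_2$; differentiability in $\varepsilon$ then follows from (H5), which guarantees that the difference quotients of the rescaled kernels converge uniformly on $r\in(0,2)$.

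Finally, with $V$ and $I$ both shown to be $C^1$ with values in $C^{0,1-\beta_2}$, the composition $\tilde{F}(\varepsilon,f)=F(\varepsilon,f,V(\varepsilon,f))$ inherits this regularity through \eqref{F2}; the remaining prefactor $\frac{i\int_{\T}G(\varepsilon|1-\xi|)\,d\xi}{\pi G(\varepsilon)}$ multiplying $\textnormal{Im}[f'(w)]$ converges by (H4) and is $C^1$ in $\varepsilon$ by (H5), while $f\mapsto\textnormal{Im}[f'(w)]$ is bounded from $X_{1-\beta_2}$ to $Y_{1-\beta_2}$. The hard part is genuinely the Hölder continuity and the $\varepsilon$--differentiability of $I_1$ near the diagonal; everything else reduces either to the smooth far--field analysis or to the vortex--pair computations already available in \cite{HmidiMateu-pairs}.
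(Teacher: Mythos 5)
Your proposal is correct and follows essentially the same route as the paper's proof: the same three-step scheme (symmetry via $I(\varepsilon,f)(\overline{w})=\overline{I(\varepsilon,f)(w)}$ and realness of $V$; regularity of the numerator through the decomposition $I=I_1+I_2+I_3$ of \eqref{I_exp}--\eqref{I1-decom}, with Taylor's formula \eqref{I3-taylor} and $\varepsilon_0<\frac{l}{4}$ handling the nonsingular far--field terms $I_2,I_3$, and Lemma \ref{Lem-pottheory} together with (H3)--(H5) handling the singular self--interaction $I_1$; then composition for $\tilde{F}$). The paper only adds the explicit formulas for $\partial_f I_1(\varepsilon,f)h$ and the limits \eqref{dfI1}--\eqref{dfI3}, which your sketch correctly anticipates as the delicate points and resolves with the same tools.
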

\begin{proof}
The proof has three steps: the symmetry of $F$, regularity of $V$, and regularity of $\tilde{F}$.

\medskip
\noindent
{\it $\bullet$ First step: Symmetry of $F$.} 
Let us prove that $F(\varepsilon,f,V)(e^{i\theta})=\sum_{n\geq 1}f_n\sin(n\theta)$ with $f_n\in\R$, i.e., checking that $F$ verifies $F(\varepsilon,f,V)(\overline{w})=-F(\varepsilon,f,V)(w)$.
First, we work with $I(\varepsilon,f)$ showing that $I(\varepsilon,f)(\overline{w})=\overline{I(\varepsilon,f)(w)}:$
\begin{align*}
\overline{I(\varepsilon,f)(w)}=&-\frac{1}{\pi\varepsilon}\sum_{k\in\Z}\overline{\int_{\T} G(|\varepsilon(\phi(w)-\phi(\xi))-kl|)\phi'(\xi)\, d\xi}\\
&-\frac{1}{\pi\varepsilon}\sum_{k\in\Z}\overline{\int_{\T} G(|\varepsilon(\phi(w)+\phi(\xi))-a-kl+ih|)\phi'(\xi)\, d\xi}\\
=&\frac{1}{\pi\varepsilon}\sum_{k\in\Z}{\int_{\T} G(|\varepsilon(\phi(w)-\phi(\overline{\xi}))-kl|)\overline{\phi'(\overline{\xi})}\, d\xi}\\
&+\frac{1}{\pi\varepsilon}\sum_{k\in\Z}{\int_{\T} G(|\varepsilon(\phi(w)+\phi(\overline{\xi}))-a-kl+ih|)\overline{\phi'(\overline{\xi})}\, d\xi}\\
=&-\frac{1}{\pi\varepsilon}\sum_{k\in\Z}{\int_{\T} G(|\varepsilon(\phi(w)-\phi(\overline{\xi}))-kl|){\phi'({\xi})}\, d\xi}\\
&-\frac{1}{\pi\varepsilon}\sum_{k\in\Z}{\int_{\T} G(|\varepsilon(\phi(w)+\phi(\overline{\xi}))-a-kl+ih|){\phi'({\xi})}\, d\xi}\\
=&-\frac{1}{\pi\varepsilon}\sum_{k\in\Z}{\int_{\T} G(|\varepsilon(\phi(\overline{w})-\phi({\xi}))-kl|){\phi'({\xi})}\, d\xi}\\
&-\frac{1}{\pi\varepsilon}\sum_{k\in\Z}{\int_{\T} G(|\varepsilon(\phi(\overline{w})+\phi({\xi}))-a-kl+ih|){\phi'({\xi})}\, d\xi}\\
=&I(\varepsilon,f)(\overline{w}).
\end{align*}
Second, we prove that $V\in\R$ analyzing the denominator and the numerator of its expression:
\begin{align*}
2i\textnormal{Im}\Big[\int_{\T}\overline{ I(\varepsilon,f)(w)}&{w}{\phi'(w)}(1-\overline{w}^2)dw\Big]\\
=&\int_{\T}\overline{ I(\varepsilon,f)(w)}{w}{\phi'(w)}(1-\overline{w}^2)dw-\overline{\int_{\T}\overline{ I(\varepsilon,f)(w)}{w}{\phi'(w)}(1-\overline{w}^2)dw}\\
=&\int_{\T}\overline{ I(\varepsilon,f)(w)}{w}{\phi'(w)}(1-\overline{w}^2)dw+\int_{\T}\overline{ I(\varepsilon,f)(w)}{w}\overline{\phi'(\overline{w})}(1-\overline{w}^2)dw\\
=&\int_{\T}\overline{ I(\varepsilon,f)(w)}{w}{\phi'(w)}(1-\overline{w}^2)dw-\int_{\T}\overline{ I(\varepsilon,f)(w)}{w}{\phi'({w})}(1-\overline{w}^2)dw\\
=&0,
\end{align*}
and
\begin{align*}
2i\textnormal{Im}\left[\int_{\T} {w}{\phi'(w)}(1-\overline{w}^2)dw\right]=&\int_{\T} {w}{\phi'(w)}(1-\overline{w}^2)dw-\overline{\int_{\T} {w}{\phi'(w)}(1-\overline{w}^2)dw}\\
=&\int_{\T} {w}{\phi'(w)}(1-\overline{w}^2)dw+\int_{\T} {w}\overline{\phi'(\overline{w})}(1-\overline{w}^2)dw\\
=&\int_{\T} {w}{\phi'(w)}(1-\overline{w}^2)dw-\int_{\T} {w}{\phi'(w)}(1-\overline{w}^2)dw\\
=&0.
\end{align*}
Thirdly, from the above computations we arrive at
\begin{align*}
F(\varepsilon,f,V)(\overline{w})=&\textnormal{Re}\left[\left\{\overline{I(\varepsilon,f)(\overline{w})}- \overline{V}\right\}\overline{w}{\phi'(\overline{w})}\right]\\
=&-\textnormal{Re}\left[\left\{{I(\varepsilon,f)(w)}-{V}\right\}\overline{w}\overline{\phi'(w)}\right]\\
=&-F(\varepsilon,f,V)(w).
\end{align*}
Now, we have that $F(\varepsilon,f,V)(e^{i\theta})=\sum_{n\geq 1}f_n\sin(n\theta)$. Moreover, the condition \eqref{function-V-gen} agrees with the fact that $f_1=0$, we refer to the first step in the proof of Proposition \ref{Prop-regEuler} for more details.

\medskip
\noindent
{\it $\bullet$ Second step: Regularity of $V$.} 
In similarity with the Euler equations, we need to study first the denominator:
$$
\int_{\T}w\phi'(w)(1-\overline{w}^2)dw=i\int_{\T}w(w+\varepsilon f'(w))(1-\overline{w}^2)dw=2\pi+i\varepsilon\int_{\T}wf'(w)dw=2\pi-i\varepsilon\int_{\T}f(w)dw,
$$
where we have used the Residue Theorem. Then, if $|\varepsilon|<\varepsilon_0$ and $f\in B_{X_{1-\beta_2}}(0,\sigma)$, the denominator is not vanishing. Moreover, it is $C^1$ in $f$ and $\varepsilon\in(-\varepsilon_0,\varepsilon_0)$. By the expression of $V$, it remains to study the regularity of $J(\varepsilon,f)$:
$$
J(\varepsilon,f)(w)=\int_{\T}\overline{ I(\varepsilon,f)(w)}{w}{\phi'(w)}(1-\overline{w}^2)dw.
$$
We use the decomposition of $I(\varepsilon,f)$ given in \eqref{I_exp}.
First, we began with the continuity is both variables. Fixing $f\in B_{X_{1-\beta_2}}(0,\sigma)$, note from Proposition \ref{Prop-trivialsol2Qgen} that
$$
J(0,f)=2\pi V_0,
$$
and then $J$ is continuous in $\varepsilon\in(-\varepsilon_0,\varepsilon_0)$. Now, fixing $\varepsilon\neq 0$, by Lemma \ref{Lem-pottheory} and (H3), we get easily that $I_i(\varepsilon,f)\in C^{1-\beta_2}(\T)$, for $i=1,2,3$.

Secondly, we study the differentiability properties. Fix again $f\in B_{X_{1-\beta_2}}(0,\sigma)$, and we differentiate with respect to $\varepsilon$:
\begin{align}\label{aux1}
\frac{d}{d\varepsilon} J(\varepsilon,f)(w)=&\int_{\T}\overline{ d_{\varepsilon}I(\varepsilon,f)(w)}{w}{\phi'(w)}(1-\overline{w}^2)dw\nonumber\\
&+i\left(\frac{1}{G(\varepsilon)}-\frac{\varepsilon G'(\varepsilon)}{G(\varepsilon)^2}\right)\int_{\T}\overline{ I(\varepsilon,f)(w)}{w}{f'(w)}(1-\overline{w}^2)dw.
\end{align}
The last expression is continuous when $\varepsilon\neq 0$ and now we aim to pass to the limit $\varepsilon\rightarrow 0$. By (H3) and (H4), we have that
$$
\lim_{\varepsilon \rightarrow 0} \left(\frac{1}{G(\varepsilon)}-\frac{\varepsilon G'(\varepsilon)}{G(\varepsilon)^2}\right)=\lim_{\varepsilon \rightarrow 0} \left(\frac{1}{G(\varepsilon)}-\frac{G'(1)}{G(\varepsilon)}\right)=0.
$$
By using Proposition \ref{Prop-trivialsol2Qgen}, we find
\begin{align}\label{I-0}
\lim_{\varepsilon \rightarrow 0}{ I(\varepsilon,f)(w)}=&-\frac{i}{\pi}\int_{\T}\frac{G'(|w-\xi|)}{|w-\xi|}\textnormal{Re}\left[(w-\xi)\overline{(f(w)-f(\xi))}\right]\, d\xi\nonumber\\
&-\frac{i}{\pi}\int_{\T}G(|w-\xi|)f'(\xi)\, d\xi+V_0,
\end{align}
which implies that
$$
\lim_{\varepsilon\rightarrow 0}\left(\frac{1}{G(\varepsilon)}-\frac{\varepsilon G'(\varepsilon)}{G(\varepsilon)^2}\right)\int_{\T}\overline{ I(\varepsilon,f)(w)}{w}{f'(w)}(1-\overline{w}^2)dw=0.
$$

Let us analyze the first term of \eqref{aux1}. Using the decomposition \eqref{I_exp}, we begin with
$$
\int_{\T}\overline{ d_{\varepsilon}I_1(\varepsilon,f)(w)}{w}{\phi'(w)}(1-\overline{w}^2)dw.
$$
We use also the decomposition for $I_1$ given in \eqref{I1-decom}. For the last term, one has that
$$
\frac{d}{d\varepsilon}I_{1,3}(\varepsilon,f)(w)=i\int_{\T}\frac{d}{d\varepsilon}\left(\frac{G(\varepsilon|\phi(w)-\phi(\xi)|)}{G(\varepsilon)}\right)f'(\xi)\, d\xi,
$$
which is continuous in $\varepsilon\neq 0$ and $f\in B_{X_{1-\beta_2}}(0,\sigma)$. Moreover, (H5) implies that
$$
\lim_{\varepsilon\rightarrow 0}\frac{d}{d\varepsilon}I_{1,3}(\varepsilon,f)(w)=0,
$$
for any $w\in\T$. Note that we can use the Dominated Convergence Theorem since the limit in (H5) is uniform. We can differentiate $I_{1,2}$ for $\varepsilon\neq 0$ and using once again (H5), we have that
\begin{align*}
\lim_{\varepsilon\rightarrow 0}\frac{d}{d\varepsilon}I_{1,2}(\varepsilon,f)(w)=0.
\end{align*}
Let us now show the idea of $I_3(\varepsilon,f)$, and $I_2(\varepsilon,f)$ will read similarly. Here, we use Taylor formula \eqref{Taylor-formula} as it was done in \eqref{I3-taylor}:
\begin{align*}
I_3(\varepsilon,f)(w)=&I_{3,1}(\varepsilon,f)(w)+I_{3,2}(\varepsilon,f)(w)\\
=&i\sum_{k\in\Z}\int_{\T}\int_0^1G'\left(\left|-a-kl+ih+\varepsilon t(\phi(w)+\phi(\xi))\right|\right)\\
&\times\frac{\textnormal{Re}\left[\left(-a-kl+ih+\varepsilon t(\phi(w)+\phi(\xi))\right)\overline{(\phi(w)+\phi(\xi))}\right]}{|-a-kl+ih+\varepsilon t(\phi(w)+\phi(\xi))|}dt\, d\xi\\
&+\frac{i}{G(\varepsilon)}\sum_{k\in\Z}\int_{\T}G(|\varepsilon(\phi(w)+\phi(\xi))-a-kl+ih|)f'(\xi)\, d\xi.
\end{align*}
These two expressions are smooth in $\varepsilon$. In fact, we can check that $\frac{d}{d\varepsilon}I_3(\varepsilon,f)$ is continuous in $\varepsilon\in(-\varepsilon_0,\varepsilon_0)$ and $f\in B_{X_{1-\beta_2}}(0,\sigma)$.

Now, fix $\varepsilon\neq 0$ and we focus on the regularity with respect to $f$. The integral $I_1$ is the more delicate one since the kernel is singular. Remark the expression of this term:
\begin{align*}
I_1(\varepsilon,f)=&i\frac{\varepsilon}{G(\varepsilon)}\int_{\T}\int_0^1G'\left(\varepsilon\left|(w-\xi)+t\frac{\varepsilon}{G(\varepsilon)}(f(w)-f(\xi))\right|\right)\\
&\times\frac{\textnormal{Re}\left[\left((w-\xi)+t\frac{\varepsilon}{G(\varepsilon)}(f(w)-f(\xi))\right)\overline{(f(w)-f(\xi))}\right]}{|(w-\xi)+t\frac{\varepsilon}{G(\varepsilon)}(f(w)-f(\xi))|}dt\, d\xi\\
&+\frac{i}{G(\varepsilon)}\int_{\T}G(\varepsilon|\phi(w)-\phi(\xi)|)f'(\xi)\, d\xi.
\end{align*}
Then,
\begin{align*}
\partial_f I_1(\varepsilon,f)h(w)=&i\frac{\varepsilon^3}{G(\varepsilon)^2}\int_{\T}\int_0^1tG''\left(\varepsilon\left|(w-\xi)+t\frac{\varepsilon}{G(\varepsilon)}(f(w)-f(\xi))\right|\right)\\
&\times\frac{\textnormal{Re}\left[\left((w-\xi)+t\frac{\varepsilon}{G(\varepsilon)}(f(w)-f(\xi))\right)\overline{(f(w)-f(\xi))}\right]}{|(w-\xi)+t\frac{\varepsilon}{G(\varepsilon)}(f(w)-f(\xi))|^2}\\
&\times\left\{\left((w-\xi)+\frac{t\varepsilon}{G(\varepsilon)}(f(w)-f(\xi))\right)\cdot (h(w)-h(\xi))\right\}dt\, d\xi\\
&-i\frac{\varepsilon^2}{G(\varepsilon)^2}\int_{\T}\int_0^1tG'\left(\varepsilon\left|(w-\xi)+t\frac{\varepsilon}{G(\varepsilon)}(f(w)-f(\xi))\right|\right)\\
&\times\frac{\textnormal{Re}\left[\left((w-\xi)+t\frac{\varepsilon}{G(\varepsilon)}(f(w)-f(\xi))\right)\overline{(f(w)-f(\xi))}\right]}{|(w-\xi)+t\frac{\varepsilon}{G(\varepsilon)}(f(w)-f(\xi))|^3}dt\, d\xi\\
&\times\left\{\left((w-\xi)+\frac{t\varepsilon}{G(\varepsilon)}(f(w)-f(\xi))\right)\cdot (h(w)-h(\xi))\right\}dt\, d\xi\\
&+i\frac{\varepsilon}{G(\varepsilon)}\int_{\T}\int_0^1G'\left(\varepsilon\left|(w-\xi)+t\frac{\varepsilon}{G(\varepsilon)}(f(w)-f(\xi))\right|\right)\\
&\times\frac{\textnormal{Re}\left[\left((w-\xi)+t\frac{\varepsilon}{G(\varepsilon)}(h(w)-h(\xi))\right)\overline{(f(w)-f(\xi))}\right]}{|(w-\xi)+t\frac{\varepsilon}{G(\varepsilon)}(f(w)-f(\xi))|}dt\, d\xi\\
&+i\frac{\varepsilon}{G(\varepsilon)}\int_{\T}\int_0^1G'\left(\varepsilon\left|(w-\xi)+t\frac{\varepsilon}{G(\varepsilon)}(f(w)-f(\xi))\right|\right)\\
&\times\frac{\textnormal{Re}\left[\left((w-\xi)+t\frac{\varepsilon}{G(\varepsilon)}(f(w)-f(\xi))\right)\overline{(h(w)-h(\xi))}\right]}{|(w-\xi)+t\frac{\varepsilon}{G(\varepsilon)}(f(w)-f(\xi))|}dt\, d\xi\\
&+\frac{i}{G(\varepsilon)}\int_{\T}G(\varepsilon|\phi(w)-\phi(\xi)|)h'(\xi)\, d\xi\\
&+\frac{i\varepsilon^2}{G(\varepsilon)^2}\int_{\T}\frac{G'(\varepsilon|\phi(w)-\phi(\xi)|)}{|\phi(w)-\phi(\xi)|}f'(\xi)\\
&\times \left\{\left((w-\xi)+\frac{\varepsilon}{G(\varepsilon)}(f(w)-f(\xi))\right)\cdot (h(w)-h(\xi))\right\}\, d\xi.
\end{align*}
For any $\varepsilon\neq 0$, the above expression is continuous in $f$ by using Lemma \ref{Lem-pottheory}. Moreover, using (H4) we can obtain the limit when $\varepsilon\rightarrow 0$ as
\begin{align}\label{dfI1}
\partial_f I_1(0,f)h(w)=i\int_{\T}\frac{G'(|w-\xi|)}{|w-\xi|}\textnormal{Re}\left[(w-\xi)\overline{(h(w)-h(\xi))}\right]\, d\xi+i\int_{\T}G(|w-\xi|)h'(\xi)\, d\xi.
\end{align}
Note that it agrees when differentiating with respect to $f$ in \eqref{I1-0}.

For the other two integrals, notice that $I_2$ and $I_3$  are not singular integrals due to $|\varepsilon(\phi(w)-\phi(\xi))-kl|$ it not vanishing for $k\neq 0$ and neither $|\varepsilon(\phi(w)-\phi(\xi))-a-kl+ih|$, for any $k\in\Z$. This gives us that $I_2$ and $I_3$ are $C^1$. Let us show the idea of $I_2$:
\begin{align*}
\partial_f I_2(\varepsilon,f)h(w)=&\partial_f \frac{1}{\varepsilon}\sum_{0\neq k\in\Z}\int_{\T} G(|\varepsilon(\phi(w)-\phi(\xi))-kl|)\phi'(\xi)\, d\xi\\
=&\frac{i}{\varepsilon}\frac{\varepsilon^2}{G(\varepsilon)}\sum_{0\neq k\in\Z}\int_{\T} G'(|\varepsilon(\phi(w)-\phi(\xi))-kl|)\\
&\times\frac{(\varepsilon(\phi(w)-\phi(\xi))-kl)\cdot (h(w)-h(\xi))}{|\varepsilon(\phi(w)-\phi(\xi))-kl|}\phi'(\xi)\, d\xi\\
&+\frac{i}{\varepsilon}\frac{\varepsilon}{G(\varepsilon)}\sum_{0\neq k\in\Z}\int_{\T} G(|\varepsilon(\phi(w)-\phi(\xi))-kl|)h'(\xi)\, d\xi.
\end{align*}
By Lemma \ref{Lem-pottheory}, the last expression is continuous in $f$. Moreover, it does when $\varepsilon\neq 0$ and it is easy to check, with the help of the Convergence Dominated Theorem, that
\begin{equation}\label{dfI2}
\partial_f I_2(0,f)h(w)=0.
\end{equation}
In the same way, one can check that $\partial_f I_3(\varepsilon,f)$ is continuous in both variables and
\begin{equation}\label{dfI3}
\partial_f I_3(0,f)h(w)=0.
\end{equation}

\medskip
\noindent
{\it $\bullet$ Third step: Regularity of $\tilde{F}$.} Since $V(\varepsilon,f)$ is $C^1$ in both variables and using the computations above concerning $I(\varepsilon,f)$, one can easily check that $\tilde{F}$ is $C^1$.
\end{proof}

\subsection{Main result}
Finally, we can announce the result concerning the desingularization of the point model \eqref{PointVortex-gen} in the general system. We need to impose an extra condition to $G$ in order to obtain that the linearized operator is an isomorphism.
\begin{theo}\label{Th-gen}
Consider $G$ satisfying (H1)--(H5) of Proposition \eqref{Gen-point}, Proposition \eqref{Prop-trivialsol-gen} and Proposition \eqref{Prop-trivialsol2Qgen}, and
\begin{enumerate}
\item[(H6)]
$
0\notin \left\{n\int_{\T}G(|1-\xi|)(1-\overline{\xi}^{n+1})\, d\xi-i\int_{\T}\frac{G'(|1-\xi|)}{|1-\xi|}\textnormal{Im}\left[(1-{\xi})(1-{\xi}^n)\right]\, d\xi,\quad n\geq 1\right\}.
$
\end{enumerate}
Let $h, l\in\R$, with $h\neq 0$ and $l>0$, and $a=0$ or $a=\frac{l}{2}$. Then, there exist $D^{\varepsilon}$ such that
\begin{equation}\label{omega_epsilon2-gen}
q_{0,\varepsilon}(x)=\frac{1}{\pi\varepsilon^2}\sum_{k\in\Z}{\bf 1}_{\varepsilon D^{\varepsilon}+kl}(x)-\frac{1}{\pi\varepsilon^2}\sum_{k\in\Z}{\bf 1}_{-\varepsilon D^{\varepsilon}+a-ih+kl}(x),
\end{equation}
defines a horizontal translating solution of \eqref{Generaleq}, with constant  speed, for any $\varepsilon\in(0,\varepsilon_0)$ and small enough $\varepsilon_0>0$. Moreover, $D^{\varepsilon}$ is at least $C^1$.
\end{theo}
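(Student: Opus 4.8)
The plan is to apply the Implicit Function Theorem to the map $\tilde{F}(\varepsilon,f)=F(\varepsilon,f,V(\varepsilon,f))$ of Proposition \ref{Prop-reggen}, following the scheme of Theorem \ref{Th-euler}. Most of the hypotheses are already secured: Proposition \ref{Prop-reggen} gives that $\tilde{F}\colon(-\varepsilon_0,\varepsilon_0)\times B_{X_{1-\beta_2}}(0,\sigma)\to Y_{1-\beta_2}$ is well defined and $C^1$, while Propositions \ref{Prop-trivialsol-gen} and \ref{Prop-trivialsol2Qgen} give $\tilde{F}(0,0)=0$. Thus the whole argument reduces to proving that $\partial_f\tilde{F}(0,0)$ is an isomorphism from $X_{1-\beta_2}$ onto $Y_{1-\beta_2}$, and hypothesis (H6) is tailored precisely to this.

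First I would compute the linearized operator. Since $V(0,f)\equiv V_0$ by Proposition \ref{Prop-trivialsol2Qgen}, one has $\partial_fV(0,0)=0$, so the velocity derivative drops out and, using $\phi'(w)=i$ at $\varepsilon=0$,
$$\partial_f\tilde{F}(0,0)h(w)=\textnormal{Re}\Big[\overline{\partial_f I(0,0)h(w)}\,iw\Big]+\frac{i\int_{\T}G(|1-\xi|)\,d\xi}{\pi}\,\textnormal{Im}\big[h'(w)\big],$$
where the last term is the $\varepsilon\to0$ limit of the explicit summand in \eqref{F2} (via (H4)), and $\partial_f I(0,0)h$ is obtained by differentiating the limit \eqref{I-0} in $f$; only $I_1$ survives because \eqref{dfI2}--\eqref{dfI3} give $\partial_f I_2(0,0)=\partial_f I_3(0,0)=0$. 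Testing on a single mode $h(w)=w^{-n}$ and applying the rotation substitution $\xi=w\eta$ exactly as in \eqref{int-comp}, every integral collapses to a fixed integral over $\T$, so the operator diagonalizes,
$$\partial_f\tilde{F}(0,0)[w^{-n}](e^{i\theta})=\frac{\mu_n}{\pi}\,\sin\big((n+1)\theta\big),$$
and $\mu_n$ equals, up to a nonzero factor, the bracket in (H6). The computational heart is to identify $\mu_n$ with that bracket: one splits $\textnormal{Re}$ and $\textnormal{Im}$ and uses that $\int_{\T}G(|1-\xi|)\,\overline{\xi}^{\,n+1}\,d\xi$ and $\int_{\T}\tfrac{G'(|1-\xi|)}{|1-\xi|}(1-\xi)(1-\xi^{n})\,d\xi$ are purely imaginary.

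With the multiplier in hand, (H6) says exactly $\mu_n\neq0$ for every $n\ge1$, giving injectivity; moreover, as $n$ runs over $\{1,2,\dots\}$ the frequencies $n+1$ sweep out $\{2,3,\dots\}$, which matches the definition of $Y_{1-\beta_2}$. To upgrade injectivity to a \emph{bijection with bounded inverse} I would isolate the leading behaviour: the $\textnormal{Im}[h']$ term is a genuine first--order multiplier (coefficient $\int_{\T}G(|1-\xi|)\,d\xi$), which alone is, as in the Euler case \eqref{linop}, an isomorphism onto the frequencies $\ge2$, whereas the $\partial_f I_1$ contribution is of strictly lower order by the mild singularity (H3). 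Hence $\partial_f\tilde{F}(0,0)$ is a relatively compact perturbation of an isomorphism between the Hölder spaces, the required mapping estimates being provided by Lemma \ref{Lem-pottheory}; being Fredholm of index zero and injective, it is an isomorphism. In particular $|\mu_n|\to\infty$ linearly, so (H6) genuinely constrains only finitely many modes.

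Once $\partial_f\tilde{F}(0,0)$ is an isomorphism, the Implicit Function Theorem yields $\varepsilon_0>0$ and a $C^1$ curve $\varepsilon\mapsto f^{\varepsilon}\in X_{1-\beta_2}$ with $f^{0}=0$ and $\tilde{F}(\varepsilon,f^{\varepsilon})=0$ for $\varepsilon\in(0,\varepsilon_0)$; then $D^{\varepsilon}$ is the image of $\D$ under the conformal map \eqref{phi-gen} built from $f^{\varepsilon}$, which is $C^{1,1-\beta_2}$ (hence at least $C^1$) since $f^{\varepsilon}\in X_{1-\beta_2}\subset C^{1,1-\beta_2}(\T)$, and the speed is $V(\varepsilon,f^{\varepsilon})\in\R$. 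By construction \eqref{F-gen} holds, so \eqref{omega_epsilon2-gen} is a horizontally translating solution of \eqref{Generaleq}. I expect the main obstacle to be the third paragraph: carrying out the diagonalization cleanly and, above all, turning the pointwise non-vanishing $\mu_n\neq0$ into a true Banach--space isomorphism rather than mere injectivity, where the order--one leading term and the uniform limits (H4)--(H5) are essential.
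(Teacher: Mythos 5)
Your proposal is correct and follows essentially the same route as the paper: reduce to the Implicit Function Theorem via Propositions \ref{Prop-reggen}, \ref{Prop-trivialsol-gen} and \ref{Prop-trivialsol2Qgen}, compute $\partial_f\tilde{F}(0,0)$ using $\partial_fV(0,0)=0$ and the vanishing of $\partial_f I_2(0,0)$, $\partial_f I_3(0,0)$, diagonalize on the modes $w^{-n}$ via the substitution of \eqref{int-comp} so that (H6) gives a trivial kernel, and conclude by a Fredholm-index-zero argument in which the smoothing integral part (the paper's operator $\mathcal{K}$, compact by Lemma \ref{Lem-pottheory}) perturbs the first-order multiplier $h\mapsto\textnormal{Im}[h']$. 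The only difference is cosmetic: the paper establishes the Fredholm property first and then checks injectivity, whereas you diagonalize first and then invoke compactness, which changes nothing in substance.
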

{{\begin{proof}
Let us consider $\tilde{F}:(-\varepsilon_0,\varepsilon_0)\times B_{X_{1-\beta_2}}(0,\sigma)\rightarrow Y_{1-\beta_2}$, with $\varepsilon\in(0,1)$, $\varepsilon<\frac{l}{4}$ and $\sigma<1$, defined in Proposition \ref{Prop-reggen}. By that proposition, it is $C^1$ in both variables. Moreover, Proposition \ref{Prop-trivialsol-gen} and Proposition and \ref{Prop-trivialsol2Qgen}, give us that $\tilde{F}(0,0)=0$. In order to implement the Implicit Function Theorem, let us compute the linearized operator:
\begin{align*}
\partial_f \tilde{F}(0,0)h(w)=&\lim_{\varepsilon\rightarrow 0}\textnormal{Re}\left[\left\{\partial_f \overline{I(0,0)}h(w)-\partial_f V(0, 0)h(w)\right\}iw\right.\\
&\left.+\left\{\overline{I(0,0)(w)}-V_0\right\}iw\frac{\varepsilon}{G(\varepsilon)} h'(w)\right]+\frac{i\int_{\T}G(\varepsilon|w-\xi|)\, d\xi}{\pi G(\varepsilon)}\textnormal{Im}\left[h'(w)\right].
\end{align*}
By Proposition \ref{Prop-trivialsol2Qgen}, then $\partial_f V(0, f)h(w)\equiv 0$. In virtue of \eqref{I-0}, we have
$$
{I(0,f)(w)}=-\frac{i}{\pi}\int_{\T}\frac{G'(|w-\xi|)}{|w-\xi|}\textnormal{Re}\left[(w-\xi)\overline{(f(w)-f(\xi))}\right]\, d\xi-\frac{i}{\pi}\int_{\T}G(|w-\xi|)f'(\xi)\, d\xi+V_0,
$$
which implies
$$
I(0,0)(w)=V_0.
$$
Then, we have
\begin{align*}
\partial_f \tilde{F}(0,0)h(w)=&\textnormal{Re}\left[iw\partial_f \overline{I(0,0)}h(w)-\frac{1}{\pi}\overline{\int_{\T}G(|1-\xi|)\, d\xi} h'(w)\right].
\end{align*}
On the other hand, using \eqref{dfI1}-\eqref{dfI2}-\eqref{dfI3} we obtain
\begin{align*}
-\pi\partial_f I(0,0)h(w)=&\partial_f I_1(0,0)h(w)\\
=&i\int_{\T}\frac{G'(|w-\xi|)}{|w-\xi|}\textnormal{Re}\left[(w-\xi)\overline{(h(w)-h(\xi))}\right]\, d\xi+i\int_{\T}G(|w-\xi|)h'(\xi)\, d\xi,
\end{align*}
which amounts to
\begin{align*}
\partial_f \tilde{F}(0,0)h(w)=&\textnormal{Re}\Big[-\frac{w}{\pi}\overline{\int_{\T}\frac{G'(|w-\xi|)}{|w-\xi|}\textnormal{Re}\left[(w-\xi)\overline{(h(w)-h(\xi))}\right]\, d\xi}\\
&-\frac{w}{\pi}\overline{\int_{\T}G(|w-\xi|)h'(\xi)\, d\xi}-\frac{h'(w)}{\pi}\overline{\int_{\T}G(|1-\xi|)\, d\xi} \Big].
\end{align*}
Note that
$
\mathcal{K}:C^{2-\beta_2}(\T)\rightarrow C^{1-\beta_2}(\T),
$
defined by
$$
\mathcal{K}(h)(w)={\int_{\T}\frac{G'(|w-\xi|)}{|w-\xi|}\textnormal{Re}\left[(w-\xi)\overline{(h(w)-h(\xi))}\right]\, d\xi}+{\int_{\T}G(|w-\xi|)h'(\xi)\, d\xi},
$$
is a compact operator since it is smoothing. Since $h\in C^{2-\beta_2}(\T)\mapsto h'\in C^{1-\beta_2}(\T)$ is a Fredholm operator of zero index, then $\partial_f \tilde{F}(0,0)$ so is. As a consequence,  checking that $\partial_f \tilde{F}(0,0)$ is an isomorphism, it is enough to check that the kernel is trivial.  We can compute the integrals involving in the linearized operator using \eqref{int-comp}, and finding
\begin{align*}
\int_{\T}\frac{G'(|w-\xi|)}{|w-\xi|}\textnormal{Re}\left[(w-\xi)\overline{(h(w)-h(\xi))}\right]\, d\xi=&\sum_{n\geq 1}\frac{a_n}{2}\Big\{\overline{w}^n\int_{\T}\frac{G'(|1-\xi|)}{|1-\xi|}\overline{\left[(1-\xi){(1-\xi^n)}\right]}\, d\xi\\
&+{w}^{n+2}\int_{\T}\frac{G'(|1-\xi|)}{|1-\xi|}{\left[(1-\xi){(1-\xi^n)}\right]}\, d\xi\Big\},\\
\int_{\T}G(|w-\xi|)h'(\xi)\, d\xi=&-\sum_{n\geq 1}a_n n\overline{w}^n\int_{\T}G(|1-\xi|)\frac{1}{\xi^{n+1}} \, d\xi,
\end{align*}
where $h(w)=\sum_{n\geq 1}a_nw^{-n}.$ Note also that
\begin{align*}
\int_{\T}G(\varepsilon|1-\xi|)\, d\xi,&\int_{\T}\frac{G'(|1-\xi|)}{|1-\xi|}\overline{\left[(1-\xi){(1-\xi^n)}\right]}\, d\xi,\\
&\int_{\T}\frac{G'(|1-\xi|)}{|1-\xi|}{\left[(1-\xi){(1-\xi^n)}\right]}\, d\xi,\int_{\T}G(|1-\xi|)\frac{1}{\xi^{n+1}} \, d\xi\in i\R.
\end{align*}
Finally, we achieve
\begin{align*}
\partial_f \tilde{F}(0,0)h(w)=&\sum_{n\geq 1}\frac{a_n}{\pi}\textnormal{Re}\Big[\frac{w^{n+1}}{2}\int_{\T}\frac{G'(|1-\xi|)}{|1-\xi|}\overline{\left[(1-\xi){(1-\xi^n)}\right]}\, d\xi\\
&+\frac{\overline{w}^{n+1}}{2}\int_{\T}\frac{G'(|1-\xi|)}{|1-\xi|}{\left[(1-\xi){(1-\xi^n)}\right]}\, d\xi\\
&-nw^{n+1}\int_{\T}G(|1-\xi|)\frac{1}{\xi^{n+1}} \, d\xi-n\overline{w}^{n+1}\int_{\T}G(\varepsilon|1-\xi|)\, d\xi\Big]\\
=&\sum_{n\geq 1}\frac{a_n i}{\pi}\sin((n+1)\theta)\Big\{\frac12\int_{\T}\frac{G'(|1-\xi|)}{|1-\xi|}\overline{\left[(1-\xi){(1-\xi^n)}\right]}\, d\xi\\
&-\frac12\int_{\T}\frac{G'(|1-\xi|)}{|1-\xi|}{\left[(1-\xi){(1-\xi^n)}\right]}\, d\xi\\
&-n\int_{\T}G(|1-\xi|)\frac{1}{\xi^{n+1}} \, d\xi+n\int_{\T}G(\varepsilon|1-\xi|)\, d\xi\Big\}\\
=&\sum_{n\geq 1}\frac{a_n i}{\pi}\sin((n+1)\theta)\Big\{n\int_{\T}G(|1-\xi|)(1-\overline{\xi}^{n+1})\, d\xi\\
&-i\int_{\T}\frac{G'(|1-\xi|)}{|1-\xi|}\textnormal{Im}\left[(1-{\xi})(1-{\xi}^n)\right]\, d\xi\Big\}.
\end{align*}
By (H6), we get that the kernel is trivial and then the linerized operator is an isomorphism.
\end{proof}}}
As a consequence, we get the result for the generalized surface quasi--geostrophic equation, meaning $G=\frac{C_{\beta}}{2\pi}\frac{1}{|\cdot|^\beta}$, for $\beta\in(0,1)$ and $C_{\beta}=\frac{\Gamma\left(\frac{\beta}{2}\right)}{2^{1-\beta}\Gamma\left(\frac{2-\beta}{2}\right)}$. We just have to check that (H6) is verified and these computations are done in \cite{HmidiMateu-pairs} for the vortex pairs.

\begin{theo}\label{Th-gSQG}
Let $h, l\in\R$, with $h\neq 0$ and $l>0$, and $a=0$ or $a=\frac{l}{2}$. Then, there exists $D^{\varepsilon}$ such that
\begin{equation}\label{omega_epsilon2-qsqg}
q_{0,\varepsilon}(x)=\frac{1}{\pi\varepsilon^2}\sum_{k\in\Z}{\bf 1}_{\varepsilon D^{\varepsilon}+kl}(x)-\frac{1}{\pi\varepsilon^2}\sum_{k\in\Z}{\bf 1}_{-\varepsilon D^{\varepsilon}+a-ih+kl}(x),
\end{equation}
defines a horizontal translating solution of the generalized surface quasi-geostrophic equations for $\beta\in(0,1)$, with constant velocity speed, for any $\varepsilon\in(0,\varepsilon_0)$ and small enough $\varepsilon_0>0$. Moreover, $D^{\varepsilon}$ is at least $C^1$.
\end{theo}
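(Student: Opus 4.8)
The plan is to recognize Theorem \ref{Th-gSQG} as the special case of the general desingularization result, Theorem \ref{Th-gen}, corresponding to the choice $G=\frac{C_\beta}{2\pi}|\cdot|^{-\beta}$ with $\beta\in(0,1)$. Thus the entire task reduces to verifying that this kernel satisfies the standing hypotheses (H1)--(H6); once they are in force, the existence of the $C^1$ domain $D^\varepsilon$ and of the horizontal translating profile \eqref{omega_epsilon2-qsqg} follows immediately from Theorem \ref{Th-gen}. Writing $\tilde{G}(r)=\frac{C_\beta}{2\pi}r^{-\beta}$, one has $\tilde{G}'(r)=-\frac{C_\beta\beta}{2\pi}r^{-\beta-1}$, and every verification below is read off from these explicit formulas.

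First I would dispatch (H1)--(H3). Radiality (H1) is built into the definition. For (H2) I would take $\beta_1=\beta\in(0,1]$, since $|\tilde{G}'(r)|=\frac{C_\beta\beta}{2\pi}r^{-1-\beta}$ already has the prescribed decay for every $r>0$. For (H3) I would take $\beta_2=\beta$: the bound $G(z)=O(z^{-\beta})$ holds as an equality, and $\log|z|=o(z^{-\beta})$ as $z\to 0$ because $|z|^{\beta}\log|z|\to 0$. This records precisely that the gSQG kernel is more singular than the logarithm but strictly less singular than the SQG kernel, placing it in the admissible range of the general theory.

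Next I would check the scaling hypotheses (H4)--(H5), where the decisive feature is that the power law is exactly homogeneous: $\tilde{G}(\varepsilon r)=\varepsilon^{-\beta}\tilde{G}(r)$ and $\tilde{G}'(\varepsilon r)=\varepsilon^{-\beta-1}\tilde{G}'(r)$. Consequently the quotients $\frac{\tilde{G}(\varepsilon r)}{\tilde{G}(\varepsilon)\tilde{G}(r)}$ and $\frac{\varepsilon\tilde{G}'(\varepsilon r)}{\tilde{G}(\varepsilon)\tilde{G}'(r)}$ are independent of both $\varepsilon$ and $r$. Their limits as $\varepsilon\to 0$ therefore exist uniformly in $r\in(0,2)$, which gives (H4), while their $\varepsilon$-derivatives vanish identically, which gives (H5). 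In short, the self-similarity of the homogeneous kernel makes the two most delicate analytic conditions automatic, so they require no real work here.

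The remaining and genuinely substantive point is the non-degeneracy condition (H6), which guarantees that the linearized operator $\partial_f\tilde{F}(0,0)$ computed in the proof of Theorem \ref{Th-gen} has trivial kernel. This amounts to showing that, for the gSQG kernel, the quantity
\[
n\int_{\T}G(|1-\xi|)(1-\overline{\xi}^{n+1})\,d\xi-i\int_{\T}\frac{G'(|1-\xi|)}{|1-\xi|}\textnormal{Im}\left[(1-\xi)(1-\xi^n)\right]\,d\xi
\]
is nonzero for every $n\geq 1$. This is exactly the eigenvalue expression arising in the desingularization of gSQG vortex pairs, and the explicit evaluation of these integrals in terms of ratios of Gamma functions, together with their nonvanishing for all $n\geq 1$, was carried out by Hmidi and Mateu in \cite{HmidiMateu-pairs}. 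I expect this to be the main obstacle: it is the only hypothesis whose verification demands a genuine special-function computation rather than an elementary scaling argument. Invoking those computations to confirm (H6), and then applying Theorem \ref{Th-gen}, completes the proof.
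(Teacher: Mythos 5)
Your proposal is correct and takes essentially the same route as the paper, whose entire proof of Theorem \ref{Th-gSQG} is the remark preceding it: apply Theorem \ref{Th-gen} to $G=\frac{C_\beta}{2\pi}|\cdot|^{-\beta}$, observing that the only substantive hypothesis is (H6), which is verified by the vortex-pair computations of \cite{HmidiMateu-pairs} --- exactly your conclusion, with your checks of (H1)--(H5) filling in details the paper leaves implicit. One small imprecision in your verification of (H4): by homogeneity the quotients $\frac{\tilde G(\varepsilon r)}{\tilde G(\varepsilon)\tilde G(r)}$ and $\frac{\varepsilon\tilde G'(\varepsilon r)}{\tilde G(\varepsilon)\tilde G'(r)}$ are indeed constant, but they equal $\frac{2\pi}{C_\beta}$ rather than $1$, so (H4) as literally stated holds only after the harmless renormalization $G\mapsto\frac{2\pi}{C_\beta}G$ (which merely rescales the translation speed and leaves the sets $D^{\varepsilon}$ unchanged) --- a normalization point the paper itself also glosses over.
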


\appendix

\section{Special functions}\label{Ap-specialfunctions}
This section recalls some definitions and properties about the Bessel functions. Let us define the Bessel function of the first kind and order $\nu$ by the expansion
$$
J_{\nu}(z)=\sum_{k=0}^{+\infty}\frac{(-1)^k\left(\frac{z}{2}\right)^{\nu+2k}}{k!\, \Gamma(\nu+k+1)},\quad |\textnormal{arg}(z)|<\pi.
$$
In addition, it is known that Bessel functions admit the following integral representation
$$
J_n(z)=\frac{1}{\pi}\int_0^{\pi}\cos(n\theta-z\sin \theta)d\theta,\quad z\in\C,
$$
for $\nu=n\in\Z$. On the other hand, the Bessel functions of imaginary argument, denoted by $I_{\nu}$ and $K_{\nu}$ are defined by
$$
I_{\nu}(z)=\sum_{k=0}^{+\infty}\frac{\left(\frac{z}{2}\right)^{\nu+2k}}{k!\, \Gamma(\nu+k+1)},\quad |\textnormal{arg}(z)|<\pi,
$$
and
$$
K_{\nu}(z)=\frac{\pi}{2}\frac{I_{-\nu}-I_{\nu}(z)}{\sin(\nu\pi)} \quad \nu\in\C\setminus\Z, \, |\textnormal{arg}(z)|<\pi.
$$
An useful expansion for $K_n$ can be found in \cite{Watson}:
\begin{align*}
K_n(z)=&(-1)^{n+1}\sum_{k=0}^{+\infty}\frac{\left(\frac{z}{2}\right)^{n+2k}}{k!(n+k)!}\left(\ln\left(\frac{z}{2}\right)-\frac{1}{2}\varphi(k+1)-\frac12\varphi(n+k+1)\right)\\
&+\frac12\sum_{k=0}^{n-1}\frac{(-1)^k(n-k-1)!}{k!\left(\frac{z}{2}\right)^{n-2k}},
\end{align*}
for $n\in\N^{\star}$.
In the case that we concern, $K_0$, it reads as
\begin{equation}\label{K0-expansion}
K_0(z)=-\ln\left(\frac{z}{2}\right)I_0(z)+\sum_{k=0}^\infty\frac{\left(\frac{z}{2}\right)^{2k}}{(k!)^2}\varphi(k+1),
\end{equation}
where
$$
\varphi(1)=-\gamma \quad \text{ and }\quad \varphi(k+1)=\sum_{m=1}^k \frac{1}{m}-\gamma, \, k\in\N^*.
$$
The constant $\gamma$ is the Euler's constant. Moreover, the following asymtotic behaviour at infinity for $K_0$ is given in \cite{Abramowitz}:
\begin{equation}\label{K0-inf}
K_0(z)\sim \sqrt{\frac{\pi}{2z}}e^{-z},\quad |\textnormal{arg}(z)|<\frac{3}{2}\pi.
\end{equation}
Finally, the derivative of $K_n$ can be expressed by terms of Bessel functions. In particular, $K_0'=-K_1$.

\section{Complex integrals and potential theory}\label{Ap-potentialtheory}
This section concerns some complex integrals, in particular,  the Stokes Theorem, the Cauchy--Pompeiu's formula and some results on singular integrals.

The complex version of the Stokes Theorem reads as follows.  Let  $D$  be a simply connected domain and $f$ a $C^1$ scalar function, then
\begin{equation}\label{Stokes}
\int_{\partial D}f(\xi)\, \, d\xi=2i\int_{D} \partial_{\overline{z}} f(z)\, dA(z),
\end{equation}
where $\partial_{\overline{z}}$ can be identify to the gradient operator in the same way
$$
\nabla=2\partial_{\overline{z}}, \quad \partial_{\overline{z}}\varphi(z):=\frac12\left(\partial_{1}\varphi(z)+i\partial_{2} \varphi(z)\right).
$$
Let us introduce now the Cauchy--Pompeiu's formula. Consider $\varphi:\overline{D}\to\C$  be a ${C}^1$ complex function, then
\begin{equation}\label{Cauchy-Pom}
-\frac{1}{\pi}\int_D \frac{\partial_{\overline{y}}\varphi(y)}{w-y}dA(y)=\frac{1}{2\pi i}\int_{\partial D}\frac{\varphi(w)-\varphi(\xi)}{w-\xi}d\xi.
\end{equation}

Finally, we deal with singular integrals of the type
\begin{equation}\label{operator-T}
\mathcal{T}(f)(w)=\int_{\T} K(w,\xi)f(\xi)\, \, d\xi,\quad w\in\T,
\end{equation}
where $K:\T\times \T\rightarrow\C$ being smooth off the diagonal. The next result focuses on the smoothness of the last operator, whose proof can be found in \cite{Hassainia-Hmidi}. See also \cite{Helms, Kress, LiebLoss}.

\begin{lem}\label{Lem-pottheory}
Let $0\leq \alpha<1$ and consider $K:\T\times\T\rightarrow \C$ with the following properties. There exists $C_0>0$ such that
\begin{itemize}
\item[(i)] $K$ is measurable on $\T\times\T\setminus\{(w,w), w\in\T\}$ and 
$$
|K(w,\xi)|\leq \frac{C_0}{|w-\xi|^\alpha}, \quad \forall w\neq \xi\in\T.
$$
\item[(ii)] For each $\xi\in\T$, $w\mapsto K(w,\xi)$ is differentiable in $\T\setminus\{\xi\}$ and 
$$
|\partial_w K(w,\xi)|\leq \frac{C_0}{|w-\xi|^{1+\alpha}}, \quad \forall w\neq \xi\in\T.
$$
\end{itemize}
Then,
\begin{enumerate}
\item The operator $\mathcal{T}$ defined by \eqref{operator-T} is continuous from $L^{\infty}(\T)$ to $C^{1-\alpha}(\T)$. More precisely, there exists a constant $C_{\alpha}$ depending only on $\alpha$ such that
$$
\|\mathcal{T}(f)\|_{1-\alpha}\leq C_{\alpha}C_0\|f\|_{L^{\infty}}.
$$
\item For $\alpha=0$, the operator $\mathcal{T}$ is continuous from $L^{\infty}(\T)$ to $C^{\beta}(\T)$, for any $0<\beta<1$. That is, there exists a constant $C_{\beta}$ depending only on $\beta$ such that
$$
\|\mathcal{T}(f)\|_{\beta}\leq C_{\beta}C_0\|f\|_{L^{\infty}}.
$$
\end{enumerate}
\end{lem}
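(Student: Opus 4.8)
The plan is to prove both Hölder mapping properties by the classical near/far splitting of the circle, controlling the part of the integral near the diagonal by the size bound (i) and the part away from the diagonal by the derivative bound (ii). First I would record the $L^\infty$ bound: since $\alpha<1$, the bound (i) gives
$$
|\mathcal{T}(f)(w)|\leq C_0\|f\|_{L^\infty}\int_{\T}\frac{|d\xi|}{|w-\xi|^{\alpha}}\leq C_\alpha C_0\|f\|_{L^\infty},
$$
the last integral being finite and independent of $w\in\T$ precisely because on the one-dimensional circle (where arc-length and chordal distance are comparable) the singularity $|w-\xi|^{-\alpha}$ is integrable exactly when $\alpha<1$. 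This already yields $\|\mathcal{T}(f)\|_{L^\infty}\leq C_\alpha C_0\|f\|_{L^\infty}$, so the core of the argument is the Hölder seminorm estimate.

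Next I would fix $w_1,w_2\in\T$, set $\delta=|w_1-w_2|$, and write
$$
\mathcal{T}(f)(w_1)-\mathcal{T}(f)(w_2)=\int_{\T}\big(K(w_1,\xi)-K(w_2,\xi)\big)f(\xi)\, d\xi,
$$
splitting $\T=B\cup(\T\setminus B)$ with $B=\{\xi\in\T:\,|\xi-w_1|\leq 2\delta\}$. On the far region $\T\setminus B$ I connect $w_1$ and $w_2$ by the short arc of $\T$ and apply the mean value inequality with (ii): every point $w$ on that arc satisfies $|w-w_1|\leq C\delta$, so $|\xi-w|\geq\tfrac12|\xi-w_1|$ for $\xi\notin B$, whence
$$
|K(w_1,\xi)-K(w_2,\xi)|\leq C\,\delta\,\sup_{w}|\partial_w K(w,\xi)|\leq \frac{C\,C_0\,\delta}{|w_1-\xi|^{1+\alpha}}.
$$
Integrating and using $\int_{|w_1-\xi|>2\delta}|w_1-\xi|^{-1-\alpha}\,|d\xi|\leq C_\alpha\,\delta^{-\alpha}$ (valid for $\alpha>0$) bounds the far contribution by $C_\alpha C_0\|f\|_{L^\infty}\,\delta^{1-\alpha}$.

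On the near region $B$ I would discard the cancellation and estimate each kernel separately by (i):
$$
\int_{B}|K(w_1,\xi)-K(w_2,\xi)|\,|f(\xi)|\,|d\xi|\leq C_0\|f\|_{L^\infty}\int_{B}\Big(\frac{1}{|w_1-\xi|^{\alpha}}+\frac{1}{|w_2-\xi|^{\alpha}}\Big)|d\xi|.
$$
Since $B$ lies in balls of radius $C\delta$ about $w_1$ and about $w_2$, each integral is at most $C\int_0^{C\delta}r^{-\alpha}\,dr\leq C_\alpha\,\delta^{1-\alpha}$, again because $\alpha<1$. Adding the near and far bounds gives $|\mathcal{T}(f)(w_1)-\mathcal{T}(f)(w_2)|\leq C_\alpha C_0\|f\|_{L^\infty}\,\delta^{1-\alpha}$, which is the $C^{1-\alpha}$ seminorm estimate; combined with the $L^\infty$ bound this proves statement (1).

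The main obstacle is the borderline case $\alpha=0$. The near estimate still contributes $\leq C\,C_0\|f\|_{L^\infty}\,\delta$, but the far integral $\int_{|w_1-\xi|>2\delta}|w_1-\xi|^{-1}\,|d\xi|$ now diverges logarithmically and equals $C\log(1/\delta)$, so the far part is only $C\,C_0\|f\|_{L^\infty}\,\delta\log(1/\delta)$. This logarithm is exactly why one cannot reach the endpoint exponent $1-\alpha=1$ (Lipschitz, hence $C^1$) regularity. Instead, for any fixed $\beta\in(0,1)$ I would absorb it via $\delta\log(1/\delta)\leq C_\beta\,\delta^{\beta}$ for small $\delta$, obtaining $\|\mathcal{T}(f)\|_{\beta}\leq C_\beta C_0\|f\|_{L^\infty}$ and proving statement (2).
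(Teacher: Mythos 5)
Your proof is correct. The paper itself does not prove this lemma --- it defers the proof to the reference \cite{Hassainia-Hmidi} --- and your argument (the $L^\infty$ bound from integrability of $|w-\xi|^{-\alpha}$ for $\alpha<1$, then the near/far splitting at scale $\delta=|w_1-w_2|$, using the size bound (i) near the diagonal and the mean-value inequality along the short arc with the derivative bound (ii) away from it, with the logarithmic loss at $\alpha=0$ absorbed into $\delta^\beta$) is exactly the standard proof used in that reference, so there is nothing to correct beyond harmless constant-chasing such as the precise comparability constant in $|\xi-w|\geq c\,|\xi-w_1|$.
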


\end{document}